\newtheorem{definition}[equation]{Definition}
\newtheorem{lemma}[equation]{Lemma}
\newtheorem{theorem}[equation]{Theorem}
\newtheorem{cor}[equation]{Corollary}
\newtheorem{remark}[equation]{Remark}
\theoremstyle{remark}
\newcommand{\FF}{\mathbb{F}}
\newcommand{\ZZ}{\mathbb{Z}}
\newcommand{\C}{\mathcal{C}} 
\newcommand{\M}{M\"obius }
\newcommand{\sg}{\leqslant}
\newcommand{\nsg}{\trianglelefteq}
\newcommand{\J}{t}
\newcommand{\T}{u}
\newcommand{\R}{\textnormal{\textbf{\emph{R}}}}
\newcommand{\para}{\textnormal{\textbf{\emph{P}}}}
\newcommand{\norm}{\textnormal{\textbf{\emph{N}}}\!}
\newcommand{\cent}{\textnormal{\textbf{\emph{C}}}}
\newcommand{\dee}{\textnormal{\textbf{\emph{D}}}}
\newcommand{\ee}{\textnormal{\textbf{\emph{E}}}}
\newcommand{\eff}{\textnormal{\textbf{\emph{F}}}}
\newcommand{\vier}{\textnormal{\textbf{\emph{V}}}\!}
\newcommand{\eye}{\textnormal{\textbf{\emph{I}}}}
\newcommand{\mi}{\textnormal{\textbf{MaxInt}}}
\title{The M\"obius function of the small Ree groups}
\author{Emilio Pierro}
\date{}
\address{Department of Economics, Mathematics and Statistics, \\ Birkbeck, University of London, \\ Malet Street, London, WC1E 7HX}
\email{e.pierro@mail.bbk.ac.uk}
\begin{document}
\subjclass[2010]{11A25 (primary), 20G41, 20B20, 20E15 (secondary).}
\keywords{\M function, small Ree groups, subgroup lattice.}

\begin{abstract} The \M function for a group, $G$, was introduced in 1936 by Hall in order to count ordered generating sets of $G$. In this paper we determine the \M function of the simple small Ree groups, $R(q)={}^2G_2(q)$ where $q=3^{2m+1}$ for $m>0$, using their 2-transitive permutation representation of degree $q^3+1$ and describe their maximal subgroups in terms of this representation. We then use this to determine $\vert$Epi$(\Gamma,G)\vert$ for various $\Gamma$, such as $F_2$ or the modular group $PSL_2(\ZZ)$, with applications to Grothendieck's theory of dessins d'enfants as well as probabilistic generation of the small Ree groups.\end{abstract}
\maketitle
\section{Introduction}
The \M function of a finite group, $G$, was introduced by Hall in 1936 \cite{hall} in order to count the number of ways $G$ can be generated by an ordered subset of its elements. Denote by $\sigma_n(G)$ the number of ordered subsets $\{x_1,\dots,x_n\}$ of $G$ and by $\phi_n(G)$ the number of these subsets which in addition generate $G$. Since any subset of $G$ generates a subgroup $H \sg G$ we have the immediate identity \[\sigma(G)=\sum_{H \sg G} \phi(H)\] and Hall observed that \M inversion could be applied to this relation to give \[ \phi(G)=\sum_{H \sg G}\mu_G(H)\sigma(H)\] where the \M function of a subgroup $H$ is determined by \[\sum_{H \sg K}\mu_G(K)=\begin{cases}
1	&\text{if }H = G\\
0	&\text{if }H \neq G.
\end{cases}\]

A priori, it seems as though we might have to work through the entire subgroup lattice of $G$, but since it is clear that $\mu_G(H)=\mu_G(H')$ if $H$ and $H'$ are conjugate in $G$, then we need only determine $\mu_G(H)$ on a set of conjugacy class representatives of subgroups. In fact, due to the following theorem of Hall, we only need to determine $\mu_G(H)$ for a set of conjugacy class representatives of subgroups which occur as the intersection of maximal subgroups and the size of those conjugacy classes. \begin{theorem}[Hall] \label{hall} If $H \sg G$ then $\mu_G(H)=0$ unless $H=G$ or $H$ is the intersection of maximal subgroups of $G$. \end{theorem}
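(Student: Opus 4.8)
The plan is to argue by induction on the index $[G:H]$, using nothing beyond the recursion defining $\mu_G$. The base case $[G:H]=1$ is the case $H=G$, one of the two permitted exceptions, so there is nothing to prove (indeed $\mu_G(G)=1$). For the inductive step we assume $H<G$ and that the conclusion already holds for every subgroup of $G$ of index strictly less than $[G:H]$, in particular for every $K$ with $H<K\sg G$; it then suffices to show that if $H$ is \emph{not} an intersection of maximal subgroups of $G$ then $\mu_G(H)=0$.

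So assume $H$ is not such an intersection. Since $G$ is finite and $H$ is proper, $H$ lies in at least one maximal subgroup, so the maximal subgroups $M$ of $G$ with $H\sg M$ form a nonempty family; let $\overline H$ be their intersection. Then $\overline H$ is a proper subgroup of $G$ containing $H$, and $H\neq\overline H$, since otherwise $H$ would be precisely the intersection of the maximal subgroups above it, against our assumption; thus $H<\overline H$ and $\overline H\neq G$. The crucial point is that \emph{every $K\sg G$ with $H<K$ and $\mu_G(K)\neq 0$ satisfies $\overline H\sg K$}: by the induction hypothesis either $K=G$, when this is clear, or $K=N_1\cap\cdots\cap N_r$ with each $N_i$ a maximal subgroup of $G$; in that case each $N_i$ contains $K$ and hence contains $H$, so each $N_i$ belongs to the family defining $\overline H$, giving $\overline H\sg N_i$ for all $i$ and therefore $\overline H\sg N_1\cap\cdots\cap N_r=K$.

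Granting this, apply the defining recursion at $H$ (which, since $H\neq G$, reads $\sum_{H\sg K}\mu_G(K)=0$) and split off the term $K=H$:
\[ 0=\mu_G(H)+\sum_{H<K}\mu_G(K). \]
By the crucial point every $K$ with $H<K$ and $\mu_G(K)\neq 0$ satisfies $\overline H\sg K$, while conversely every $K$ with $\overline H\sg K$ satisfies $H<K$ since $H<\overline H$; hence $\sum_{H<K}\mu_G(K)=\sum_{\overline H\sg K}\mu_G(K)$, and this last sum is $0$ by the defining recursion applied at $\overline H$, using $\overline H\neq G$. Therefore $\mu_G(H)=0$, completing the induction. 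I do not expect a genuine obstacle here: the one idea needed is to introduce $\overline H$ and to notice that the maximal subgroups occurring in any overgroup of $H$ with nonzero \M value must themselves contain $H$, which is exactly what makes the tail of the recursion at $H$ collapse into the recursion at $\overline H$.
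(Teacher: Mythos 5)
Your proof is correct. Note that the paper itself offers no proof of this statement --- it is quoted as Theorem~\ref{hall} with a citation to Hall's 1936 paper --- so there is no in-paper argument to compare against; your argument is essentially Hall's original one (a special case of what is now often called the crosscut argument). The key steps all check out: the induction on $[G:H]$ is legitimate because every proper overgroup of $H$ has strictly smaller index; the intersection $\overline H$ of the maximal subgroups above a proper $H$ is well defined, proper, and strictly larger than $H$ precisely when $H$ fails to be such an intersection; the inductive hypothesis forces every overgroup $K>H$ with $\mu_G(K)\neq 0$ to contain $\overline H$; and the two instances of the defining recursion, at $H$ and at $\overline H$, then collapse onto each other to give $\mu_G(H)=0$.
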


The related problem of subjecting our subsets to some relations can also be solved. Let $f_j(x_i)$ be a family of words in the $x_i$ and $\Gamma$ the finitely-generated abstract group generated by these words. We modify our notation to let $\sigma_\Gamma(G)$ count the ordered subsets of $G$ of size $n$ which satisfy the relations $f_j$ and $\phi_\Gamma(G)$ those which additionally generate. The quantity $\phi_\Gamma(G)$ then counts the number of epimorphisms $\theta \colon \Gamma \to G$ and since Aut$(G)$ acts semiregularly \cite{hall} on the ordered $n$-subsets of $G$ it follows that $d_\Gamma=\phi_\Gamma(G)/\vert$Aut$(G)\vert$ counts the number of distinct normal subgroups $N \nsg \Gamma$ such that $\Gamma/N \cong G$. If the elements of a group $\Gamma$ correspond to some category of objects, then the quantity $d_\Gamma$ enumerates the number of those objects whose automorphism group is isomorphic to $G$, see \cite{some,dj87,suzenum,suzmob}. For example, the quantity $d_2(G)=d_{F_2}(G)$, where $F_2$, the free group on 2 generators, is connected to Grothendieck's theory of \emph{dessins d'enfants} \cite{dd} where it is equal to the number of regular dessins with automorphism group $G$.

Hall succeeds in determining the \M function for the family of simple groups $PSL_2(p)=L_2(p)$ where $p>3$ is prime as well as a number of other finite group. In 1988 these results were extended by Downs in his PhD thesis \cite{phdowns} to $L_2(q)$ and $PGL_2(q)$ for all prime powers, $q$. The determination of $L_2(q)$ was published in \cite{psl2} and applications of this function were published in \cite{some, dj87}. More recently Downs and Jones have determined the \M function of the simple Suzuki groups, $Sz(2^{2m+1}) \cong {}^2B_2(2^{2m+1})$ \cite{suzmob} and have applied these results to enumeration in various Hecke groups, in addition to those objects already mentioned \cite{suzenum}.

Following the Suzuki groups it seems natural to determine the \M function for the small Ree groups, $R(q)$, first constructed by Ree in 1961 \cite{ree}. Like the Suzuki groups, the small Ree groups are one of the few families of twisted Chevalley groups. They are usually considered as a group of $7 \times 7$ matrices in $L_7(q)$, for which explicit generators are given \cite{ln}, or as a group of $8 \times 8$ matrices in $P\Omega_8^+(q)$ as in \cite{kleid}. In this paper we will mostly work with the 2-transitive permutation representation of degree $q^3+1$ due to Tits \cite{tits}, however this still depends on the Lie theory for its construction. More recently, Wilson \cite{raw1, raw2, raw12} has determined a construction of the small Ree groups without any use of the Lie theory. The small Ree groups, like the Suzuki groups, also have an interpretation in terms of a finite geometry, see \cite[Section 7.7]{van} and a design, in the case of $R(q)$, a $2-(q^3+1,q+1,1)$ design \cite{lun}. The action of elements of $G$ on $\Omega$ is well known \cite{ward}, in addition we describe the action of the maximal subgroups of $G$ on $\Omega$. The author makes no claim that this interpretation is not known elsewhere but is not aware of it appearing in the literature so we state these findings since they may be of interest in their own right and since they will be used in the proofs later on. The main aim of this paper is to prove the following.

\begin{theorem} \label{mob} Let $G=R(3^n)$, for a positive odd integer $n>3$, be a simple small Ree group. Then the \M function, $\mu_G(H)$ of a subgroup $H \sg G$ is zero except for the following families of conjugacy classes of subgroups of $G$.
\begin{center}
\begin{tabular}{c c c c}\hline
Isomorphism								&for $h \vert n$	&											&\\
type of $H \sg G$							&and s.t.		&$[G\colon N_G(H)]$						&$\mu_G(H)$\\ \hline
$R(3^h)$									&--				&$\vert G \vert/3^{3h}(3^{3h}+1)(3^h-1)$	&$\mu(n/h)$\\
$3^h+\sqrt{3^{h+1}}+1\colon6$				&--				&$\vert G \vert/6(3^h+\sqrt{3^{h+1}}+1)$	&$-\mu(n/h)$\\
$3^h-\sqrt{3^{h+1}}+1\colon6$				&$h>1$			&$\vert G \vert/6(3^h-\sqrt{3^{h+1}}+1)$	&$-\mu(n/h)$\\
$(3^h)^{1+1+1}\colon(3^h-1)$				&--				&$\vert G \vert/3^{3h}(3^h-1)$				&$-\mu(n/h)$\\
$2 \times L_2(3^h)$							&$h>1$			&$\vert G \vert/3^h(3^{2h}-1)$				&$-\mu(n/h)$\\
$2 \times (3^{h}\colon\frac{3^h-1}{2})$			&$h>1$			&$\vert G \vert/3^h(3^h-1)$					&$\mu(n/h)$\\
$(2^2 \times D_{(3^h+1)/2})\colon3$			&$h>1$			&$\vert G \vert/6(3^h+1)$					&$-\mu(n/h)$\\
$2^2 \times D_{(3^h+1)/2}$					&$h>1$			&$\vert G \vert/6(3^h+1)$					&$3\mu(n/h)$\\
$2 \times L_2(3)$							&--				&$\vert G \vert/24$							&$-2\mu(n)$\\
$2^3$										&--				&$\vert G \vert/168$						&$21\mu(n)$\\ \hline
\end{tabular}
\end{center}
where $\mu(n)$ for a natural number, $n$, is the usual \M function from number theory. From the number of subgroups conjugate to $H$ in $G$, given by $[G:N_G(H)]$ the index of the normaliser of $H$ in $G$, the \M function of $G$ can be determined.
\end{theorem}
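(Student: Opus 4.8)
The plan is to combine Hall's theorem (Theorem~\ref{hall}) with the description of the maximal subgroups of $G=R(q)$, $q=3^n$, furnished by the earlier sections in terms of the $2$-transitive action on $\Omega$. First I would record the conjugacy classes of maximal subgroups: a point stabiliser, which is the parabolic $P\cong(3^n)^{1+1+1}{:}(3^n-1)$; the centraliser of an involution, $C_G(t)\cong2\times L_2(q)$; the normaliser of a four-group $N_G(V)\cong(2^2\times D_{(q+1)/2}){:}3$; the normalisers $(q\pm\sqrt{3q}+1){:}6$ of the two cyclic ``Ree tori'', arising from $q^3+1=(q+1)(q+\sqrt{3q}+1)(q-\sqrt{3q}+1)$; and the subfield subgroups $R(3^{n/p})$, one for each prime $p\mid n$. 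By Theorem~\ref{hall} the \M function vanishes off the poset $\mathcal{P}$ of intersections of members of these classes, so it suffices to (i) determine $\mathcal{P}$ up to $G$-conjugacy, together with the interval $[H,G]$ and the index $[G:N_G(H)]$ for each $H\in\mathcal{P}$, and (ii) evaluate $\mu_G$ on $\mathcal{P}$ from the defining relation $\sum_{H\sg K\sg G}\mu_G(K)=0$ for $H\neq G$.

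The organising principle for step (i) is that each family in the statement is ``defined over $\mathbb{F}_{3^h}$'' and already occurs inside $R(3^h)$: the parabolic of $R(3^h)$ gives $(3^h)^{1+1+1}{:}(3^h-1)$; the torus normalisers of $R(3^h)$ give $(3^h\pm\sqrt{3^{h+1}}+1){:}6$; the involution centraliser of $R(3^h)$ is $2\times L_2(3^h)$; inside it --- and, crucially, also inside the parabolic, since it equals $C_P(t)$ --- lies $2\times(3^h{:}\tfrac{3^h-1}{2})$, and deeper still the subgroups $2\times L_2(3)$ and $2^3$ of $2\times L_2(3^h)$, whose containments are governed by the \M function of $L_2(3^h)$ \cite{hall,phdowns,psl2}; and the four-group normaliser of $R(3^h)$ gives $(2^2\times D_{(3^h+1)/2}){:}3$, whose centraliser-of-the-four-group subgroup is $2^2\times D_{(3^h+1)/2}$. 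Working inside each maximal subgroup via its action on $\Omega$ --- stabilisers of points, of pairs of points, and of the blocks of the associated $2$-$(q^3+1,q+1,1)$ design --- I would show that $\mathcal{P}$ is exactly this list and, for each $H=H(h)$ with $h\mid n$, identify the local sublattice $L_H$ of $[H(h),R(3^h)]$ spanned by intersections of maximal subgroups of $R(3^h)$: a single point when $H=R(3^h)$; the two-element chain when $H$ is maximal in $R(3^h)$; the four-element ``diamond'' (two incomparable elements between bottom and top) when $H=C_P(t)$, contained in exactly the parabolic and the involution centraliser; and the height-two lattice with four atoms when $H=C_G(V)=2^2\times D_{(3^h+1)/2}$, contained in $N_G(V)$ and in the three centralisers of the three involutions of $V$. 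A further lemma would show that the subfield tower commutes with this local picture --- intersecting a maximal subgroup of $G$ with $R(3^{h'})$ returns the corresponding maximal subgroup of $R(3^{h'})$ --- so that for $h>1$ the interval $[H(h),G]\cap\mathcal{P}$ factors as the product of the lattice of divisors of $n/h$ with $L_H$.

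Step (ii) is then bookkeeping. As the \M function of a product poset is the product of the \M functions and the divisor lattice of $n/h$ contributes the number-theoretic $\mu(n/h)$, one obtains $\mu_G(H(h))=\mu(n/h)\cdot\mu_{L_H}$, where $\mu_{L_H}$ equals $1$, $-1$, $1$, $3$ according as $L_H$ is a point, a chain, the diamond, or the four-atom lattice; this yields the entries $\mu(n/h)$, $-\mu(n/h)$, $\mu(n/h)$, $3\mu(n/h)$ of the table and recovers $\mu_G(G)=1$ and $\mu_G(M)=-1$, so rows $1$--$8$ follow. The remaining rows, and the restrictions $h>1$ attached to several families, come from the bottom of the subfield tower: at $h=1$ one has $3-\sqrt{3^{2}}+1=1$, $2\times L_2(3)\cong2\times A_4\cong(2^2{:}3)\times C_2$, $2^2\times D_2\cong2^3$, and $R(3)\cong L_2(8){:}3$ is not simple, so several families coalesce and the small subgroups acquire further classes of maximal overgroups --- e.g. $2^3$ is a Sylow $2$-subgroup of $G$, lying in $2\times L_2(3^h)$ for \emph{every} $h\mid n$, in every $R(3^{n/p})$ and in the four-group normalisers, with $N_G(2^3)\cong2^3{:}(7{:}3)$. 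Recomputing the enlarged local lattices and summing over all chains in the divisor lattice of $n$ then yields $-2\mu(n)$ for $2\times L_2(3)$ and $21\mu(n)$ for $2^3$. The indices $[G:N_G(H)]$ are read from $|G|$ and $|N_G(H)|$, and the final clause of the theorem is then formal.

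The step I expect to be the main obstacle is (i): proving that $\mathcal{P}$ contains no conjugacy classes beyond the ten listed, and for each $H$ pinning down exactly which maximal subgroups of $R(3^h)$ contain it, verifying that the relevant pairwise and higher intersections collapse to $H$ so that $L_H$ is as claimed, and controlling how the various classes of maximal subgroups fuse and overlap --- most delicately around the four-group, the $2\times A_4$'s and the Sylow $2$-subgroup, where three classes of maximal subgroups meet and where these small subgroups migrate between families as $n$ varies modulo small integers. This calls for a careful study of the geometry of the degree $q^3+1$ action and of conjugacy in the $2$-local subgroups; once $\mathcal{P}$, its intervals and the product structure over the divisor lattice are in place, step (ii) is a finite Möbius computation.
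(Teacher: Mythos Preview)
Your strategy matches the paper's: reduce via Hall's theorem to the poset $\mathcal P$ of intersections of maximal subgroups, then compute $\mu_G$ top-down. Your product-poset packaging---$[H(h),G]\cap\mathcal P$ as the divisor lattice of $n/h$ times a fixed local lattice $L_H$---is a pleasant abstraction that the paper does not make explicit; the paper instead tabulates, for each $H$, every overgroup $K$ together with the multiplicity $\nu_K(H)=[G:N_G(K)]\cdot N(K,H)/[G:N_G(H)]$ and sums directly. For the ten families in the statement with $h>1$ the two computations coincide, and your identification of $L_H$ as a point, a chain, a diamond, or a four-atom lattice reproduces the paper's values $\mu(n/h),-\mu(n/h),\mu(n/h),3\mu(n/h)$. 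One refinement you will need: for the torus normalisers $(3^h\pm\sqrt{3^{h+1}}+1){:}6$ the overgroup of ``maximal'' type at level $k$ is not of fixed isomorphism type but is one of $\norm_V(k),\norm_2(k),\norm_3(k)$ according to $k/h\bmod 12$, via a divisibility lemma for $3^k\pm3^{(k+1)/2}+1$; the interval is still abstractly a product of a two-chain with the divisor lattice, so your argument survives, but that arithmetic lemma is required input.

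The genuine gap is your expectation for step (i). The poset $\mathcal P$ is \emph{not} exhausted by the ten families of the theorem: it also contains dihedral groups $D_{2a_i(l)}$ (for $3l\mid n$), elementary abelian groups of order $3^l$, cyclic groups of order $3^l-1$, four-groups, cyclic groups of orders $6$, $3$ and $2$, and the identity. These arise, for instance, as $G_m\cap C_G(t)$ when the block $\Omega^t$ meets $\Omega(m)$ in at most one point, as the intersection of two distinct parabolics, or as the intersection of two involution centralisers whose blocks are disjoint. They are absent from the \emph{statement} only because $\mu_G$ vanishes on them, and that vanishing must be proved. Your product mechanism does not cover these cases as stated: for $H$ elementary abelian of order $3^h$, there are $3^{2(n-k)}$ conjugates of $R(3^k)$ containing $H$, $3^{2(n-k)}$ parabolics of level $k$, $3^{2n-k}$ copies of $2\times L_2(3^k)$, and $3^{2n-k}$ copies of $2\times(3^k{:}\tfrac{3^k-1}{2})$, so $[H,G]\cap\mathcal P$ is not the product of a fixed finite lattice with the divisor lattice of $n/h$, and $\mu_G(H)=0$ comes from pairwise cancellation of these weighted counts at each $k$ rather than from a product formula. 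The cyclic, four-group and identity cases are similar (and messier). So you should plan on $\mathcal P$ having roughly fifteen families, and on verifying $\mu_G=0$ for the extra ones by the direct overgroup-count method rather than by the product argument.
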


Following the determination of the \M function for the simple small Ree groups we use it to determine $d_\Gamma(G)$ for a number of finitely presented groups and use these to prove a number of related results in probabilistic generation of the small Ree groups. Many of these results were previously known, see \cite{lieb} for a recent survey in this area, an exception is the following which the author is unaware of appearing elsewhere in the literature.

\begin{cor} \label{23ree} Let $G=R(q)$ be a small Ree group and $P_{a,b}(G)$ be the probability that a randomly chosen element of order $a$ and a randomly chosen element of order $b$ will generate $G$. Then $P_{2,3}(G) \to 1$ as $\vert G \vert \to \infty$ and $P_{3,3}(G) \to 1$ as $\vert G \vert \to \infty$.
\end{cor}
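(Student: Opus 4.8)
The plan is to realise each probability as a quotient and then estimate it using Theorem~\ref{mob}. For $P_{2,3}$ take $\Gamma=C_2\ast C_3\cong PSL_2(\ZZ)$, and for $P_{3,3}$ take $\Gamma=C_3\ast C_3$; write $j_k(H)=\vert\{x\in H:x^k=1\}\vert=1+i_k(H)$ for the number of elements of $H$ of order dividing $k$, where $i_k(H)$ counts those of order exactly $k$, so that $\sigma_\Gamma(H)=j_a(H)\,j_b(H)$. Since $G$ is nonabelian simple no pair containing the identity can generate $G$, so $\phi_\Gamma(G)$ counts exactly the pairs $(x,y)$ with $x$ of order $a$, $y$ of order $b$ and $\langle x,y\rangle=G$; hence $P_{a,b}(G)=\phi_\Gamma(G)/\bigl(i_a(G)\,i_b(G)\bigr)$. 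Now Hall's inversion formula $\phi_\Gamma(G)=\sum_{H\sg G}\mu_G(H)\,\sigma_\Gamma(H)$ together with $\sigma_\Gamma(G)=j_a(G)\,j_b(G)$ and the trivial bound $0\le 1-P_{a,b}(G)$ give
\[
1-P_{a,b}(G)\;\le\;\frac{\bigl(1+i_a(G)+i_b(G)\bigr)+\sum_{[H]\neq[G]}[G\colon N_G(H)]\,\vert\mu_G(H)\vert\,j_a(H)\,j_b(H)}{i_a(G)\,i_b(G)},
\]
where $[H]$ runs over the conjugacy classes listed in Theorem~\ref{mob} other than $[G]$ itself. As that theorem supplies $[G\colon N_G(H)]$ and the bound $\vert\mu_G(H)\vert\le 21$, it remains to bound $j_2(H)$ and $j_3(H)$ for each family, and to bound $i_2(G)$ and $i_3(G)$ from below.

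The element counts I would assemble are the following. The involution class of $G$ has centraliser $2\times L_2(q)$, so $i_2(G)=q^2(q^2-q+1)\sim q^4$. The Sylow $3$-subgroups of $G$ form a trivial-intersection family of $q^3+1$ subgroups $Q$ of order $q^3$ whose derived subgroup $Q'$ is elementary abelian of order $q^2$ and accounts for every element of $Q$ of order $3$; hence $i_3(Q)=q^2-1$ and $i_3(G)=(q^3+1)(q^2-1)\sim q^5$. The same reasoning inside each maximal subfield subgroup gives $i_3\bigl(R(3^{n/p})\bigr)=\Theta\bigl(3^{5n/p}\bigr)$ for each prime $p$ dividing $n$, and inside $L_2(q)$ gives $i_3(L_2(q))=q^2-1$. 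For the maximal subgroups one records: in the point stabiliser $P=Q{:}T$ the torus involution has centraliser of order $q(q-1)$, so $i_2(P)=q^2$ and $i_3(P)=q^2-1$; $i_2(2\times L_2(q))=1+q(q-1)$; and the remaining maximal subgroups $(2^2\times D_{(q+1)/2}){:}3$ and $C_{q\pm\sqrt{3q}+1}{:}6$ have order $O(q)$. The non-maximal families ($R(3^h)$, $(3^h)^{1+1+1}{:}(3^h-1)$, $2\times L_2(3^h)$, etc.\ with $h\mid n$, $h<n$, together with $2\times L_2(3)$ and $2^3$) are handled the same way; since $n$ is odd every proper divisor has $3^h\le q^{1/3}$, which keeps those terms small.

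A family-by-family estimate then bounds the numerator of the displayed inequality by $O(q^8)$, the dominant contributions being $\Theta(q^8)$ from the involution centraliser $2\times L_2(q)$ and from the torus normalisers $(2^2\times D_{(q+1)/2}){:}3$ and $C_{q\pm\sqrt{3q}+1}{:}6$, with the point stabiliser contributing $\Theta(q^7)$, each subfield subgroup $O(q^{7+2/p})$, and every other family $o(q^8)$. Since $i_2(G)\,i_3(G)=\Theta(q^9)$ this yields $1-P_{2,3}(G)=O(q^{-1})\to0$; the same bounds against $i_3(G)^2=\Theta(q^{10})$ yield $1-P_{3,3}(G)=O(q^{-2})\to0$. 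The step I expect to be the main obstacle is the estimate $i_3(G)=\Theta(q^5)$ --- specifically the lower bound $i_3(G)\ge cq^5$ for a constant $c>0$, which is what lets $i_2(G)\,i_3(G)$ and $i_3(G)^2$ dominate the $\Theta(q^8)$ terms, and the matching upper bound $i_3\bigl(R(3^{n/p})\bigr)=O(3^{5n/p})$, which keeps the subfield subgroups subdominant. Both follow from the trivial-intersection property of the Sylow $3$-subgroups and the internal structure of $Q$ established in the earlier sections; once they are in hand, even the cruder union bound $1-P_{a,b}(G)\le\sum_{M}[G\colon M]\,i_a(M)\,i_b(M)/\bigl(i_a(G)\,i_b(G)\bigr)$ over representatives $M$ of the classes of maximal subgroups suffices, the full \M function being used only to bring the non-maximal intersections onto the same footing.
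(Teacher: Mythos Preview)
Your argument is correct, but it takes a different route from the paper. The paper does not bound $1-P_{a,b}(G)$ term by term; instead it first carries the full Hall--M\"obius sum to closed form (Corollaries~\ref{cn} and~\ref{eta}), obtaining for instance
\[
\eta_3(G)=\vert G\vert\sum_{l\mid n}\mu(n/l)(3^l-1)^2,\qquad
\phi_{3,3}(G)=\vert G\vert\sum_{l\mid n}\mu(n/l)\,3^l(3^{2l}+3^l-4),
\]
and then simply divides by $\vert G\vert_2\vert G\vert_3$ and $\vert G\vert_3^2$ (read off from Table~\ref{hn}) to get explicit rational expressions
\[
P_{2,3}(G)=\frac{3^n\sum_{l\mid n}\mu(n/l)(3^l-1)^2}{3^{3n}+1},\qquad
P_{3,3}(G)=\frac{3^{3n}\sum_{l\mid n}\mu(n/l)\,3^l(3^{2l}+3^l-4)}{(3^{3n}+1)(3^{2n}-1)(3^n+1)},
\]
whose limits are visibly~$1$ since the $l=n$ summand dominates. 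So where you estimate each conjugacy class's contribution separately and use $\vert\mu_G(H)\vert\le 21$, the paper exploits the massive cancellation already built into Theorem~\ref{mob} to collapse the whole sum to a short divisor sum before taking the limit.

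What each approach buys: the paper's route is sharper---it gives exact values of $P_{a,b}(G)$, not just $1-O(q^{-1})$---but it relies on the closed-form evaluations of Section~\ref{app}, which in turn rest on the detailed element counts in Table~\ref{hn}. Your route is more robust: it needs only the list of classes, their indices, and crude order-of-magnitude bounds on $j_2,j_3$, and (as you note at the end) in fact the naive union bound over maximal subgroups already suffices, so Theorem~\ref{mob} is not strictly required. One minor point: in your displayed inequality the term $(1+i_a(G)+i_b(G))$ actually enters with a \emph{minus} sign in $i_a i_b-\phi_\Gamma$, so your bound is valid but slightly wasteful; this does not affect the asymptotics since that term is $O(q^5)=o(q^8)$.
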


This paper is then structured as follows. In Section \ref{struct} we give an analysis of the structure of $R(q)$, its maximal subgroups and their behaviour as permutation groups which we use in Section \ref{maxint} to prove which subgroups of $R(q)$ occur as an intersection of maximal subgroups. In Section \ref{det} we then determine the \M function of $R(q)$ and in Section \ref{app} we evaluate the \M function for a number of finitely generates groups, such as $F_2$ and $PSL_2(\ZZ)$ and use this to prove a number of results regarding the probabilistic generation of the small Ree groups.

The smallest small Ree group $R(3) \cong P \Gamma L_2(8)$ is not simple, but its derived subgroup $R(3)' \cong L_2(8)$ is and the \M function for $R(3)'$ was determined by Downs in \cite{psl2}. However, since the maximal subgroups of $R(3)$ differ from those of the simple small Ree groups we do not consider it in this paper. We also make use of the ATLAS \cite{ATLAS} notation throughout.

\section{Structure of the small Ree groups} \label{struct}
Throughout, $G$ will denote a simple small Ree group, $R(q)$, where $q>3$ is an odd power of 3, and $\Omega$ will be a set of size $q^3+1$ on which $G$ acts 2-transitively, as in \cite{tits}. In order to describe the maximal subgroups in detail, we first describe the conjugacy classes of elements in $R(q)$ and in particular the action of their elements on $\Omega$.
\subsection{Conjugacy Classes}
The following assembles the necessary results from the character table of $R(q)$, due to Ward \cite{ward}, as well as results from Levchuk and Nuzhin \cite{ln} and the summary given by Jones in \cite{rr}.

The Sylow 2-subgroups of $G$ are elementary abelian of order 8, so $G$ contains no elements of order 4, and the normaliser in $G$ of a Sylow 2-subgroup has shape $2^3\colon7\colon3$. All involutions inside a Sylow 2-subgroup normaliser are conjugate, hence they are all conjugate in $G$. An involution in $G$ is represented by $\J$ and will fix $q+1$ points in $\Omega$, which we denote $\Omega^\J$ or refer to as the \emph{block} of $\J$, and the centraliser in $G$ of $\J$ is $C_G(\J) \cong 2 \times L_2(q)$, which acts doubly transitively on the block of $\J$ \cite{lun}. Any two blocks can intersect in at most 1 point and any two points are pointwise fixed by a unique involution.

The Sylow 3-subgroups of $G$ have shape $q^{1+1+1}$, exponent 9 and trivial intersection with each other.
They each stabilise a unique point $\omega \in \Omega$.
Elements of order 3 fix a single point in $\Omega$ and fall into one of 3 conjugacy classes, $\C_3^0$, $\C_3^+$ and $\C_3^-$. Elements of $\C_3^0$ have centralisers of order $q^3$ and are conjugate to their inverses, while elements of $\C_3^*=\C_3^+ \cup \C_3^-$, denoted $\T$, have centralisers of order $2q^2$ and if $\T \in \C_3^+$ then $\T^{-1} \in \C_3^-$. 
Elements of order 6 fix a single point in $\Omega$ and fall into two conjugacy classes, $\C_6^+$, $\C_6^-$. They are denoted by $\J\T$, the commuting product of an involution with an element of $\C_3^*$, hence they square to elements of $\C_3^*$. Their centralisers have order $2q$ and if $\J\T \in \C_6^+$ then $\J\T^{-1} \in \C_6^-$.
Elements of order 9 also fix a single point in $\Omega$ and fall into conjugacy classes $\C_9^0$, $\C_9^+$ and $\C_9^-$ where elements in $\C_9^0$ are conjugate to their inverses but if $g \in \C_9^+$ then $g^{-1} \in \C_9^+$. Elements of order 9 all have centralisers in $G$ of order $3q$ and will cube to an element of $\C_3^0$.

The remaining elements are best described in terms of the Hall subgroups to which they belong. The Hall subgroups $A_i$, for $i=0,1,2,3$, are each cyclic, having the following orders \[\vert A_0 \vert = \frac{1}{2}(q-1), \; \; \vert A_1 \vert = \frac{1}{4}(q+1), \; \; \vert A_2 \vert = q-\sqrt{3q}+1, \; \; \vert A_3 \vert = q+\sqrt{3q}+1\] so that $G$ has order \[ \vert G \vert = 2^3q^3\vert A_0 \vert \vert A_1 \vert \vert A_2 \vert \vert A_3 \vert.\]
Elements of order $k \neq 2$ dividing $q-1$ are conjugate to some power of $\J r$, the commuting product of an involution, $\J$, with an element, $r$, belonging to a Hall subgroup $A_0$. Such an element fixes two points in $\Omega$ and $\vert C_G(\J r) \vert = q-1$. The remaining elements do not fix any points in $\Omega$ and have order $k \geq 7$ dividing $q^3+1 = (q+1)(q-\sqrt{3q}+1)(q+ \sqrt{3q}+1)$. Elements of order 7 are all conjugate and will divide one of these three factors of $q^3+1$ according to the value of $q$ modulo 7. Otherwise there are elements of order $k \neq 2$ dividing $(q+1)/2$ conjugate to some power of $\J s$, the commuting product of an involution, $\J$, with an element, $s$ of a Hall subgroup, $A_1$; elements, $v$, of order dividing $q-\sqrt{3q}+1$; and elements, $w$, of order dividing $q+ \sqrt{3q}+1$. Their centralisers in $G$ have the following orders: $\vert C_G(\J s) \vert = q+1$, $\vert C_G(v) \vert = q-\sqrt{3q}+1$ and $\vert C_G(w)\vert = q+ \sqrt{3q}+1$.

From Sylow's Theorems and the fact that a Hall subgroup is the centraliser of any Sylow $p$-subgroup which it contains, we have that if a subgroup is isomorphic to a Sylow 2-subgroup, Sylow 3-Subgroup, or Hall subgroup $A_i$, for $i=0,1,2,3$, then it is also conjugate to it.

\subsection{Maximal Subgroups}
The maximal subgroups for the simple small Ree groups were determined by Levchuk and Nuzhin \cite{ln} and Kleidman \cite{kleid}. They are conjugate to one of the following. \begin{enumerate}
\item subfield subgroups $R(q^{1/p})$ with $p$ prime,
\item parabolic subgroups, $q^{1+1+1}\colon(q-1)$,
\item involution centralisers, $2 \times L_2(q)$,
\item normalisers of a four-group, $(2^2 \times D_{(q+1)/2})\colon3$,
\item normalisers of a Hall subgroup $A_2$, $(q-\sqrt{3q}+1)\colon6$,
\item normalisers of a Hall subgroup $A_3$, $(q+\sqrt{3q}+1)\colon6$.
\end{enumerate}

\subsubsection{Subfield subgroups}
If we think of $\Omega$ as a set of points in 6-dimensional projective space defined over $\FF_q$ \cite{tits}, or as a matrix group defined as a subgroup of $L_7(q)$ \cite{ln}, then a subfield subgroup is conjugate to the subgroup of $R(q)$ which fixes those points defined on the restriction to a subfield of $\FF_q$, equivalently, generated by those matrices whose entries belong to this subfield. These subgroups are maximal when the subfield is maximal in $\FF_q$, that is, when the index of the subfield is prime. If $q=3^n$, we write $G_m$ for a subgroup conjugate to the subfield subgroup defined over the field of order $3^m$ where $m \vert n$. Then $G_m$ will act doubly transitively on the subset $\Omega(m) \subset \Omega$ of size $3^{3m}+1$ which are the points in $\Omega$ fixed by the Sylow 3-subgroups of $G_m$. Points fixed by elements of $G_m$ all belong to the $\Omega(m)$ with the exception of the blocks of the involutions in $G_m$ which fix $3^m+1$ points in $\Omega(m)$ and $3^n-3^m$ points in $\Omega(m)^c$.

\subsubsection{Parabolic subgroups}
The parabolic subgroups of $G$ are the normalisers in $G$ of a Sylow 3-subgroup and have shape $q^{1+1+1}\colon(q-1)$. They are stabilisers of a point $\omega \in \Omega$ and we denote them $P_\omega$ although we may suppress the $\omega$ if it is clear or unneccessary. The centre of a Sylow 3-subgroup is elementary abelian of order $q$ and all nontrivial elements belong to $\C_3^0$. It is contained in a normal elementary abelian subgroup of $P$ having order $q^2$ all of whose remaining elements belong to $\C_3^*$. All elements outside of this normal abelian subgroup have order 9. For a pair of involutions $\J \neq \J'$ in $P_\omega$ the intersection of their blocks is $\{\omega\}$.

\subsubsection{Involution centralisers}
Let $\J \in G$ be an involution, $C=C_G(\J)$ its centraliser in $G$ and $\Omega^\J$ the block of $\J$ stabilised by $C$ and on which $C$ acts 2-transitively. Elements of $C$ of order 3 belong to $\C_3^*$ and fix a point in $\Omega^\J$, elements of order $k>2$ dividing $q-1$ fix two points in $\Omega^\J$ and elements of order dividing $(q+1)/2$ do not fix any points in $\Omega^\J$. It follows that any pair of commuting involutions in $G$ have disjoint blocks. If $\J' \neq \J''$ are non-commuting involutions in $C$ then from the list of maximal subgroups of $L_2(q)$ \cite{dick} we can easily determine the maximal subgroups of $C$ and we see that $\langle \J',\J'' \rangle$ generates a subgroup of a dihedral group. It follows that $\J'$ and $\J''$ both normalise a subgroup generated by an element of order $k>2$ where $k$ divides $(q-1)/2$ or $(q+1)/4$ and since such subgroups do not stabilise points outside of $\Omega^\J$ we have that $\Omega^{\J'} \cap \Omega^{\J''} = \varnothing$. There can then be at most $(q^3+1)/(q+1)=q^2-q+1$ involutions in $C$.

The involutions of $L = C/\langle\J\rangle \cong L_2(q)$ are all conjugate and so there are three conjugacy classes of involutions in $C$: \begin{enumerate}
\item \{t\}, the central involution,
\item the $q(q-1)/2$ involutions in $L$, and
\item the $q(q-1)/2$ involutions in the coset $\J L$
\end{enumerate}
so that there are a total of $q^2-q+1$ involutions in $C$ and we see that for any $\omega \in \Omega$, $\omega$ belongs to the block of some involution in $C$.

\subsubsection{Four-group normalisers}
Let $\J_1 \neq \J_2$ be commuting involutions in $G$ with $t_3=t_1t_2$. The four-group $V = \langle \J_1, \J_2 \rangle$ is centralised in $G$ by a dihedral subgroup $D_{(q+1)/2}$ and normalised by an element $\T \in \C_3^*$ so that we have $N=N_G(V)\cong(2^2 \times D_{(q+1)/2})\colon3$. This is also the normaliser of a Hall subgroup $\langle s \rangle$ conjugate to $A_1$ as follows. The centraliser of $\langle s \rangle$ in $G$ is a unique four-group, $V$, and $V \times \langle s \rangle$ is normalised by an element, $\tau\T$, of order 6, where $\tau$ commutes with $V$ and $\langle s \rangle^\T = \langle s \rangle$. A simple counting argument shows that $\langle s \rangle$ will belong to a unique four-group normaliser, whereas a four-group will belong to $1+3(q+1)/2$ four-group normalisers. To avoid confusion with the normalisers of the other Hall subgroups, we refer to groups conjugate to $N$ as four-group normalisers. There are four subgroups of $N$ isomorphic to $D_{(q+1)/2}$, three of which are conjugate and a representative of this conjugacy class we denote by $D'$, and one of which is normal, $D$, so that $\langle s \rangle \nsg D$. The three conjugacy classes of involutions in $N_G(V)$ are then \begin{enumerate}
\item the 3 involutions in $V$,
\item the $(q+1)/4$ involutions, $\tau$, in $D$, and;
\item the $3(q+1)/4$ involutions, $\tau'$, in $D'$, or one of its conjugates in $N$. \end{enumerate}
A four-group $V' \neq V$ in $N$ contains at most one involution from $V$ and at most one involution from $D$, otherwise $V'$ contains two distinct involutions conjugate to $\tau'$. Let $\tau_1'=\J_i\tau_1,\tau_2'=\J_j\tau_2$ be such a pair with $1 \leq i,j \leq 3$ and $\tau_1,\tau_2$ conjugate to $\tau$. Then their product $\J_i\tau_1\J_j\tau_2 = \J_i\J_js^n$ for some power of $s$ is an involution if and only if $\tau_1=\tau_2$. Thus, a four-group $V' \neq V$ contained in $N$ will be conjugate to either $\langle \J_i,\J_i\tau \rangle$ or $\langle \J_i,\J_j\tau \rangle$. The centraliser of $\tau$ in $N$ is $C_N(\tau) \cong 2 \times L_2(3)$; conversely, if $L$ is a subgroup of $N$ isomorphic to $2 \times L_2(3)$, then its central involution is conjugate to $\tau$ since the only conjugacy class of involutions whose order is not divisible 3 is that of $\tau$. The centraliser of $\tau'$ in $N$ is elementary abelian of order 8.

The geometric interpretation of $N$ is then as follows. The nontrivial elements of $V$ fix a mutually disjoint triple of blocks in $\Omega$. In fact, any pair of involutions in $N$ will fix a disjoint pair of blocks in $\Omega$ since all involutions belong to the centraliser of a four-group. The orbits of $s$ stabilise these blocks and since elements of order $(q+1)/2$ are conjugate to the commuting product of a nontrivial element of $V$ with an element conjugate to $\tau'$ we have that elements of order $(q+1)/2$ will be composed of four cycles of length $(q+1)/4$ and $2(q^2-q)$ cycles of length $(q+1)/2$. Elements conjugate to $\T$ or $\T^{-1}$ fix a point disjoint from the fixed points of all involutions, except those conjugate to $\tau$ and, from above, we have that $C_N(\tau)$ acts doubly transitively on four of the points in $\Omega^\tau$, where the four conjugate cyclic subgroups of order 3 in $C_N(\tau)$ each stabilising one of these four points in $\Omega^\tau$. Elements conjugate to $\tau\T$ or $\tau\T^{-1}$ behave similarly to $\T$ or $\T^{-1}$.
 
\subsubsection{Normalisers of Hall subgroups $A_2, A_3$} The cyclic Hall subgroups, $A_2, A_3$, are normalised by cyclic subgroups of order 6. Let $A=\langle a \rangle$ be conjugate to a Hall subgroup $A_2$ or $A_3$ and let $N$ be its normaliser in $G$, the geometric picture is analogous in either case. Nontrivial elements of $A$ do not fix any points in $\Omega$ and $A$ is centralised only by the cyclic subgroup it generates, then as permutations they are composed of $(q^3+1)/\vert A \vert$ cycles of length $\vert A \vert$. Let $\T \in \C_3^*$ normalise $A$, then there are $\vert A \vert$ conjugates of $\T$ in $N$ and the fixed points of elements conjugate to $\T$ belong to the orbit of a unique cycle in $a$. For each conjugate of $\T$ there is an involution $\J$ with which is commutes and so the fixed point of $\T$ belongs to the block of $\J$. The remaining elements in the block of $\J$ each belong to a unique orbit of $a$ since if an orbit of $a$ contained more than one element of $\Omega^\J$ then $\J$ would commute with $a$. The elements of order 6, conjugate to $\J\T$ behave similarly to the elements conjugate to $\T$.

\subsection{Containments between Hall subgroup normalisers}
We make the following definition.
\begin{definition} Let $q=3^n$ be an odd power of 3. For a divsor $m$ of $n$ we define \[a_1(m)=3^m+1, \; \; a_2(m)=3^m-3^\frac{m+1}{2}+1, \; \; a_3(m)=3^m+3^\frac{m+1}{2}+1\] and in the case $m=n$ we omit the $(m)$. \end{definition}
\begin{remark} From the defintion we can write $a_1(m)a_2(m)a_3(m)=3^{3m}+1$ and if $G$ is a small Ree group, then a Hall subgroup of $G$ conjugate to $A_i$ has order $a_i/4$ for $i=1$ or $a_i$ for $i=2,3$. \end{remark}
If $G=R(3^{3m})$ then a Hall subgroup, $H$, conjugate to $A_1$ will have order $a_1(m)a_2(m)a_3(m)/4$ and if an element $h \in H$ belongs to $G_m$, then it will be normalised in $G_m$ either by a cyclic subgroup of order 6, or by a subgroup isomorphic to $2 \times L_2(3)$. Since $H$ is cyclic we need only prove the following number theoretic Lemma in order to aid the accurate determination of the overgroups of such an intersection.

\begin{lemma} \label{numb} Let $l$ be a positive factor of $n>2$ an odd natural number. Then one and only one of $a_1(n)$, $a_2(n)$ or $a_3(n)$ will be divisble by $a_i(l)$ for each  $i=1,2,3$.\end{lemma}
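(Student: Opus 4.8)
The plan is to reduce the statement to a single fact about prime divisors. First I would record three preliminary facts. (a) For any odd $m$ the integers $a_1(m),a_2(m),a_3(m)$ are pairwise coprime: all three are odd, $a_1(m)=3^m+1\equiv1\pmod3$ and $a_2(m)a_3(m)=3^{2m}-3^m+1\equiv1\pmod3$; moreover $a_1(m)$ is prime to $a_2(m)a_3(m)$ because $3^{2m}-3^m+1=(3^m+1)(3^m-2)+3$, and $a_3(m)-a_2(m)=2\cdot 3^{(m+1)/2}$ forces $\gcd(a_2(m),a_3(m))$ to divide a power of $6$, hence to be $1$. (b) Since $n/l$ is odd, $3^l+1\mid 3^n+1$ and $3^{3l}+1\mid 3^{3n}+1$. (c) By the Remark preceding the statement, $a_1(m)a_2(m)a_3(m)=3^{3m}+1$; combining this with (a) and (b), every prime $p$ divides at most one of $a_1(n),a_2(n),a_3(n)$, and if $p\mid a_i(l)$ then $v_p(a_i(l))\le v_p(3^{3l}+1)\le v_p(3^{3n}+1)$ (respectively $v_p(a_1(l))=v_p(3^l+1)\le v_p(3^n+1)$ when $i=1$) is bounded by $v_p$ of the one $a_j(n)$ that $p$ divides. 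Hence it suffices to prove that, for each $i$, all prime divisors of $a_i(l)$ divide one and the same $a_j(n)$: this yields $a_i(l)\mid a_j(n)$, and uniqueness of $j$ follows from (a) whenever $a_i(l)>1$. The sole exception $a_i(l)=1$ occurs only for $i=2,l=1$, where the Hall subgroup $A_2\le R(3)$ is trivial and is excluded (compare the conditions ``$h>1$'' in Theorem~\ref{mob}), so I assume henceforth $a_i(l)>1$.

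So the whole task is to show that the prime divisors of $a_i(l)$ cannot split between two different $a_j(n)$; I would run this through multiplicative orders. Let $p\mid a_i(l)$; then $p\ne3$, and $\mathrm{ord}_p(3^l)=2$ if $i=1$ (since $3^l\equiv-1$), while $\mathrm{ord}_p(3^l)=6$ if $i\in\{2,3\}$ (since then $p\mid\Phi_6(3^l)=a_2(l)a_3(l)$ and $p\nmid6$). Writing $k=n/l$, which is odd, one has $\mathrm{ord}_p(3^n)=\mathrm{ord}_p(3^l)/\gcd(\mathrm{ord}_p(3^l),k)$. Therefore, when $i=1$, or when $i\in\{2,3\}$ and $3\mid k$, we get $\mathrm{ord}_p(3^n)=2$, so $p\mid 3^n+1=a_1(n)$; in these cases the landing index is uniformly $j=1$, which one can also see directly from $a_1(l)\mid 3^n+1$ and, for $3\mid k$, $a_2(l)a_3(l)=\Phi_6(3^l)\mid 3^{3l}+1\mid 3^n+1$ (here $3l\mid n$ and $n/(3l)$ is odd). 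This leaves only the case $i\in\{2,3\}$ with $3\nmid k$, where $\mathrm{ord}_p(3^n)=6$ and $p\mid\Phi_6(3^n)=a_2(n)a_3(n)$; here I must show that the choice between $a_2(n)$ and $a_3(n)$ is the same for every such $p$.

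For this last case, work modulo $p$ and put $\zeta\equiv 3^l$, a primitive sixth root of unity, so $\zeta^2-\zeta+1\equiv0$, $\zeta^3\equiv-1$ and $(\zeta+1)^2\equiv3\zeta$. Since $p$ divides $a_2(l)a_3(l)=(\zeta+1)^2-(3^{(l+1)/2})^2$, we have $3^{(l+1)/2}\equiv\eta(\zeta+1)\pmod p$ where $\eta=+1$ precisely when $p\mid a_2(l)$ and $\eta=-1$ when $p\mid a_3(l)$. A short computation using $(kl+1)/2=k\tfrac{l+1}{2}-\tfrac{k-1}{2}$ and $(\zeta+1)^2\equiv3\zeta$ then gives $3^{(n+1)/2}\equiv\eta(\zeta+1)\zeta^{(k-1)/2}\pmod p$. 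Because $3\nmid k$ and $k$ is odd, $\zeta^k$ is again a primitive sixth root of unity, so $(\zeta^k+1)^2\equiv3\zeta^k\equiv(\zeta+1)^2\zeta^{k-1}\equiv\bigl((\zeta+1)\zeta^{(k-1)/2}\bigr)^2$; hence $(\zeta+1)\zeta^{(k-1)/2}\equiv\varepsilon(\zeta^k+1)$ for a sign $\varepsilon=\pm1$, and reducing the exponent $(k-1)/2$ modulo $3$ and using $\zeta^3\equiv-1$ shows $\varepsilon$ is determined by $k\bmod 12$ alone, independently of $p$ (explicitly $\varepsilon=(-1)^{(k-1)/6}$ for $k\equiv1\pmod6$ and $\varepsilon=(-1)^{(k+1)/6}$ for $k\equiv5\pmod6$). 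Consequently $a_2(n)\equiv(1-\eta\varepsilon)(\zeta^k+1)$ and $a_3(n)\equiv(1+\eta\varepsilon)(\zeta^k+1)$ modulo $p$, and since $\zeta^k+1\not\equiv0$ we conclude $p\mid a_2(n)\iff\eta\varepsilon=1$ and $p\mid a_3(n)\iff\eta\varepsilon=-1$. Thus, once one knows which of $a_2(l),a_3(l)$ the prime $p$ divides, its landing index is fixed by the single sign $\varepsilon(k)$ and does not depend on $p$, which is exactly what was needed. I expect the main obstacle to be precisely this last step: keeping the three sign conventions ($\eta$, $\varepsilon$, and the choice of primitive sixth root $\zeta$ versus $\zeta^{-1}$) consistent while carrying out the reductions of exponents modulo $6$ and $12$; everything preceding it is essentially bookkeeping-free once the coprimality and divisibility facts (a)--(c) are in place.
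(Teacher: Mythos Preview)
Your argument is correct, apart from a harmless slip: $a_1(m)=3^m+1$ is even, not odd as you assert in (a). This does not affect your coprimality proof (which only uses that $a_2(m),a_3(m)$ are odd, and that $\gcd(a_1(m),a_2(m)a_3(m))\mid 3$), and for $p=2$ the conclusion $p\mid a_1(n)$ is immediate since $a_2(n),a_3(n)$ are odd.

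The paper's proof takes a different, single-modulus route. Instead of decomposing into primes, it works directly modulo $m=a_2(l)a_3(l)=3^{2l}-3^l+1$: from $3^{3l}\equiv-1\pmod m$ it reduces both $3^n$ and $3^{(n+1)/2}$ modulo $m$ by a chain of congruences, and obtains $3^n\pm3^{(n+1)/2}+1\equiv 3^l\pm'3^{(l+1)/2}+1\pmod m$, with the sign $\pm'$ determined by $n/l\bmod 12$. Since $a_2(l)$ and $a_3(l)$ are coprime factors of $m$, this yields the divisibility in one stroke, with no prime-by-prime or valuation bookkeeping. Your approach trades that direct computation for the order/roots-of-unity language: the identity $(\zeta+1)\zeta^{(k-1)/2}=\varepsilon(\zeta^k+1)$ in $\mathbb{Z}[\zeta]/(\zeta^2-\zeta+1)$ makes the dependence on $k\bmod 12$ transparently algebraic and independent of $p$, at the cost of the extra reduction step (c) needed to pass from ``every prime divisor of $a_i(l)$ divides $a_j(n)$'' to ``$a_i(l)\mid a_j(n)$''. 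Both arguments arrive at the same case split by $n/l\bmod 12$.
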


\begin{proof} Let $l$ and $n$ be as in the hypothesis and write $a_i$ for $a_i(n)$. It is clear that gcd$(a_i,a_j)=1$ for $i \neq j$ and so the $a_i(l)$ will divide at most one of the $a_i$. Also, $a_1(l)$ divides $a_1$ and if 3 divides $n/l$ then $a_1$ is divisible by $a_1(l)a_2(l)a_3(l)$, so assume that $i=2$ or 3 and that $n/l \equiv \pm1$ mod 3. Consider the values of $a_2(l)$ and $a_3(l)$ modulo $m=a_2(l)a_3(l)=3^{2l}-3^l+1$. We have that $a_1(l)n=3^{3l}+1$ and so $3^{3l} \equiv -1$ mod $m$ which gives us the following chain of congruences
\[3^n \equiv (-1)3^{n-3l} \equiv (-1)^23^{n-6l} \equiv \dots \equiv (-1)^k3^{n-3kl} \textnormal{ mod } m\] where $0\leq n-3kl < 3l$, from which it follows that \[3^n \equiv
\begin{cases}
(-1)^{\frac{n-l}{3l}}3^l=3^l			&\text{ mod } m, \quad	\text{if } (n/l) \equiv 1 \text{ mod } 3\\
(-1)^{\frac{n-2l}{3l}}3^{2l}=-3^{2l}		&\text{ mod } m, \quad	\text{if } (n/l) \equiv -1 \text{ mod } 3.\\
\end{cases}\]
Similarly, we have \[ 3^{\frac{n+1}{2}} \equiv \dots \equiv (-1)^k3^{\frac{n+1}{2}-3kl} \textnormal{ mod } m\] where this time $0 \leq \frac{n+1}{2}-3kl < 3l$. Eventually we find \[3^{\frac{n+1}{2}} \equiv
\begin{cases}
(-1)^{\frac{n-l}{6l}}3^{\frac{l+1}{2}}	&\text{ mod } m, \quad	\text{if } (n/l) \equiv 1 \text{ mod } 3\\
(-1)^{\frac{n-5l}{6l}}3^{\frac{5l+1}{2}}	&\text{ mod } m, \quad	\text{if } (n/l) \equiv -1 \text{ mod } 3.\\
\end{cases}\]
It can then be easily verified that \[
\frac{n-l}{6l} \equiv \frac{n-5l}{6l} \equiv
\begin{cases}
0 \text{ mod } 2		&\text{ if } (n/l) \equiv 1 \text{ mod } 4\\
1 \text{ mod } 2		&\text{ if } (n/l) \equiv 3 \text{ mod } 4.
\end{cases}\]
Assembling these results, along with the observation that 
\[\mp3^{\frac{5l+1}{2}}-3^{2l}+1=(3^l+1\pm3^{\frac{l+1}{2}})(3^{l+1}-3^l+1\mp(3^{\frac{3l+1}{2}}+3^{\frac{l+3}{2}}-2.3^{\frac{l+1}{2}})),\]
 we finally arrive at the following
\[3^n \pm 3^{\frac{n+1}{2}} +1 \equiv
\begin{cases}
3^l \pm 3^{\frac{l+1}{2}} + 1		&\text{ mod } m \quad \text{if } (n/l) \equiv \pm1 \text{ mod } 12\\
3^l \mp 3^{\frac{l+1}{2}} + 1		&\text{ mod } m \quad \text{if } (n/l) \equiv \pm5 \text{ mod } 12.\\
\end{cases}\]
This completes the proof.\end{proof}

\section{Intersections of maximal subgroups} \label{maxint}
Our aim is to show that if a subgroup of $G=R(3^n)$ is equal to the intersection of a number of maximal subgroups of $G$, then it belongs to one of the following classes: \begin{enumerate}
\item $\R(l)$, subfield subgroups $R(3^l)$,
\item $\para(l)$, parabolic subgroups $(3^l)^{1+1+1}\colon(3^l-1)$,
\item $\cent_\J(l)$, involution centralisers $2 \times L_2(3^l)$, where $l>1$,
\item $\norm_V(l)$, four-group normalisers $(2^2 \times D_{(3^l+1)/2})\colon3$, where $l>1$,
\item $\norm_2(l)$, $\norm_3(l)$, normalisers of Hall subgroups $A_2, A_3$ of order $a_2(l), a_3(l) > 1$,
\item $\cent_V(l)$, four-group centralisers $2^2 \times D_{(3^l+1)/2}$ where $l>1$,
\item $\dee_H(l)$, a dihedral subgroup of order $2a_2(l)$ or $2a_3(l) \geq 14$, when $3l$ divides $n$,
\item $\cent_\J^\omega(l)$, point stabilisers of involution centralisers $2 \times (3^l\colon\frac{3^l-1}{2})$ where $l>1$,
\item $\eff(l)$, Sylow 3-subgroups of involution centralisers, elementary abelian of order $3^l$, $l>1$,
\item $\cent_0(l)$, centralisers of Hall subgroups $A_1$, cyclic of order $3^l-1$ where $l>1$,
\item $\cent_\J(1)$, involution centralisers in $R(3)$,
\item $\ee$, Sylow 2-subgroups of $G$, elementary abelian of order 8,
\item $\vier$, four-groups,
\item $\cent_6^*$, $\cent_3^*$, $\cent_2$, cyclic subgroups of orders 6, 3 and 2, generated by an element $\J\T$, $\T$ or $\J$,
\item $\eye$, the identity subgroup
\end{enumerate}
where in each class $l$ runs through all positive factors of $n$ unless otherwise stated.
In anticipation, we term the union of these classes $\mi$.
\begin{remark} There is a well known exceptional isomorphism between $L_2(3)$ and $A_4$, but in context it is more appropriate to think of it as $L_2(3)$. Where we omit the $(l)$ for classes (1)--(5) we mean the elements of that class that are maximal subgroups of $R(q)$, otherwise those elements for which $l=n$, if applicable. The exclusions made are to avoid the following repetitions \[\norm_V(1) = \cent_\J(1), \; \; \cent_V(1) = \ee, \; \; \cent_\J^\omega (1) = \cent_6^*, \; \; \eff(1) = \cent_3^*, \; \; \dee_H(1) \supset \cent_0(1)  = \cent_2.\] In particular, the list is ordered such that no element of a class appears in more than one class, and that no element of a class is a subgroup of any element of a successive class in the stated ordering. \end{remark}

We devote the rest of this section to proving the following theorem, since we can immediately determine by Theorem \ref{hall} that $\mu_G(H)=0$ for any subgroup $H$ which does not occur as the intersection of maximal subgroups.

\begin{theorem} \label{int} Let $G$ be a simple small Ree group and let $H$ be a subgroup that is the intersection of a number of maximal subgroups of $G$. Then $H \in \mi$. \end{theorem}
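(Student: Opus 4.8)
The plan is to proceed by a systematic case analysis organised around the six classes of maximal subgroups listed in Section~\ref{struct}, exploiting the fact that $\mi$ is closed under intersection would reduce the problem to: (i) showing each pairwise intersection of maximal subgroups lies in $\mi$, and (ii) showing each element of $\mi$ is either a maximal subgroup or arises as a further intersection. Step (i) is the substantive part. Since $G$ acts $2$-transitively on $\Omega$ of degree $q^3+1$, and since the maximal subgroups have been described above in terms of their action on $\Omega$ (point stabilisers, block stabilisers, disjoint-block configurations, cycle structures), the first move is to translate ``$H$ is an intersection of maximals'' into combinatorial data: the set of points of $\Omega$ fixed by $H$, the set of blocks $\Omega^\J$ left invariant by $H$, and the Hall subgroups normalised by $H$. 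A parabolic $\para(n)$ is a point stabiliser; an involution centraliser $\cent_\J(n)$ is the full stabiliser of a block; a four-group normaliser $\norm_V(n)$ stabilises a disjoint triple of blocks; and the Hall normalisers $\norm_2(n),\norm_3(n)$ stabilise the unique cycle of a generator. So an intersection of two maximals is controlled by how these geometric objects (points, blocks, block-triples, long cycles) intersect.

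First I would handle intersections involving a subfield subgroup $\R(l)$: using the description that $G_m$ acts $2$-transitively on $\Omega(m)$ of size $3^{3m}+1$ and that fixed points of its elements lie in $\Omega(m)$ except for involution blocks, one shows $\R(l)\cap\R(l')=\R(\gcd(l,l'))$ and that $\R(l)$ intersected with a parabolic/involution centraliser/four-group normaliser/Hall normaliser of $G$ is the corresponding object \emph{inside} $G_l$ — i.e. a parabolic $\para(l)$, an involution centraliser $\cent_\J(l)$, etc., possibly after passing to a point- or block-stabiliser. This is where the classes with a running parameter $l\mid n$ (rather than just $l=n$) enter, and where Lemma~\ref{numb} is needed: when a Hall subgroup normaliser of $G$ meets $G_l$, the number-theoretic statement pins down \emph{which} of $\norm_2(l),\norm_3(l)$ (equivalently which factor $a_i(l)$ of $3^{3l}+1$) occurs, so the intersection is correctly identified. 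Next I would do the intra-type intersections: two parabolics meet in a point-pair stabiliser, giving $\cent_0(l)$ (the cyclic group of order $3^l-1$) or one of its subgroups; two involution centralisers meet according to whether their blocks are equal, meet in one point, or are disjoint, yielding $\cent_\J^\omega$, $\cent_V$ or smaller; a parabolic meets an involution centraliser in a point-stabiliser of the latter, namely $\cent_\J^\omega(l)=2\times(3^l\colon\frac{3^l-1}{2})$ or its unipotent radical $\eff(l)$; and so on through four-group normalisers (using the detailed involution-class and sub-$D_{(q+1)/2}$ analysis given above) and Hall normalisers (using that distinct conjugates of $A_2,A_3$ have coprime orders, forcing intersections down to the dihedral $\dee_H(l)$, cyclic $\cent_2$ or trivial cases). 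Throughout, the already-established facts — Sylow $2$-subgroups elementary abelian of order $8$, Sylow $3$-subgroups of shape $q^{1+1+1}$ with TI property, any two blocks meeting in $\le 1$ point, any two points lying in a unique block, the centraliser/normaliser structure of each element type — are exactly what rule out spurious intersection types and confirm the list is complete.

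The main obstacle I expect is the four-group normaliser $\norm_V$: it has the richest internal structure (three involution classes, four copies of $D_{(q+1)/2}$ with one normal, the identification with a Hall-$A_1$ normaliser, the count $1+3(q+1)/2$ of four-group normalisers through a given $V$), and its intersections with parabolics, involution centralisers and other four-group normalisers must be shown to produce precisely $\cent_V(l)$, $\dee_H(l)$ (only when $3l\mid n$, matching the Lemma~\ref{numb} divisibility condition), $2\times L_2(3)$, $\ee$, $\vier$, $\cent_6^*$ and nothing else. Verifying that $2\times L_2(3)$ and $2^3$ genuinely occur as intersections — not merely as subgroups — and bounding which dihedral and cyclic groups appear, will require the careful involution-product computations sketched above (e.g. $\J_i\tau_1\J_j\tau_2=\J_i\J_js^n$ is an involution iff $\tau_1=\tau_2$). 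A secondary but routine obstacle is bookkeeping: ensuring the ordered list in the definition of $\mi$ really is closed downward under the intersection operation and that the stated exclusions ($\norm_V(1)=\cent_\J(1)$, etc.) are consistently applied, so that no intersection escapes the list and no listed class is redundant. Once every pairwise intersection is shown to land in $\mi$ and $\mi$ is seen to be intersection-closed, an arbitrary finite intersection of maximals lies in $\mi$ by induction on the number of factors, completing the proof.
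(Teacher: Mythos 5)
Your overall strategy coincides with the paper's: a pairwise case analysis organised by the six types of maximal subgroups, carried out via their action on $\Omega$ (fixed points, blocks, Hall subgroups), with Lemma~\ref{numb} used to decide which of $\norm_2(l),\norm_3(l)$ or $\norm_V(l)$ absorbs a given Hall subgroup. However, two steps as you state them have genuine gaps. First, the claim $\R(l)\cap\R(l')=\R(\gcd(l,l'))$ is false in general: two \emph{distinct conjugates} of the same maximal subfield subgroup $G_m$ certainly do not intersect in a copy of $G_m$, and there is no reason their intersection should be a subfield subgroup at all. The paper handles this differently: if $G_m\cap G_l$ is not itself in $\R(l)$, it lies inside some maximal subgroup $H$ of one of the two, and then $G_m\cap G_l=(H\cap G_m)\cap G_l$ is attacked by reapplying the already-proved lemmas on intersections of a subfield subgroup with parabolics, involution centralisers and Hall normalisers. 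Your argument needs this reduction (or an equivalent one) to cover non-standardly embedded pairs of subfield subgroups.

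Second, and more seriously, your passage from pairwise intersections to arbitrary intersections rests on the assertion that $\mi$ ``is seen to be intersection-closed,'' which is neither proved nor true as stated: $\mi$ contains, for example, cyclic groups of order $3^l-1$ and dihedral groups of order $2a_i(l)$, and intersections of conjugates of such groups can a priori be cyclic of orders not of the form $3^k-1$, $2$, $3$ or $6$, hence outside $\mi$. What is actually needed --- and what the paper proves --- is the weaker, targeted statement that intersecting each \emph{already-obtained pairwise intersection} with a further \emph{maximal} subgroup produces nothing new; this uses specific structural facts (the intersection of three distinct parabolics is trivial because Sylow $3$-subgroups are TI and a point-pair stabiliser is cyclic; the intersection of more than three involution centralisers is a Sylow $2$-subgroup; further intersections with Hall normalisers only shrink within $\cent_6^*\cup\cent_3^*\cup\cent_2\cup\eye$). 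Without replacing your closure claim by this case-by-case verification, the induction on the number of maximal factors does not go through.
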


We begin by determining the intersections of pairs of maximal subgroups in $G$.

\subsection{Intersections with subfield subgroups} \label{rint}
Let $G_m$ be a maximal subfield subgroup of $G$ and $\Omega(m)$ the $3^{3m}+1$ points in $\Omega$ fixed by the Sylow 3-subgroups of $G_m$.

\begin{lemma} The intersection of $G_m$ with a parabolic subgroup belongs to $\para(l) \cup \cent_2 \cup \eye$. \end{lemma}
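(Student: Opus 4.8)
The plan is to recognise $G_m\cap P_\omega$ as a point stabiliser. Since the parabolic subgroup $P_\omega$ is, by definition, the full stabiliser in $G$ of the point $\omega\in\Omega$, we have $G_m\cap P_\omega=(G_m)_\omega$, the stabiliser of $\omega$ in the subfield subgroup $G_m\cong R(3^m)$. So the whole statement reduces to describing the point stabilisers of $G_m$ in its action on $\Omega$, and the natural case split is according to whether or not $\omega$ lies in the distinguished subset $\Omega(m)$ of size $3^{3m}+1$ on which $G_m$ acts $2$-transitively.

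If $\omega\in\Omega(m)$, then $(G_m)_\omega$ is a point stabiliser in the degree-$(3^{3m}+1)$ $2$-transitive action of $R(3^m)$, hence a parabolic subgroup of $G_m$ of shape $(3^m)^{1+1+1}\colon(3^m-1)$ by the discussion of subfield and parabolic subgroups in Section~\ref{struct}. Since $m\mid n$, this places $G_m\cap P_\omega$ in the class $\para(m)\subseteq\para(l)$.

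If $\omega\notin\Omega(m)$, I would invoke the structural fact recorded in Section~\ref{struct} that every point of $\Omega$ fixed by a non-involution element of $G_m$ lies in $\Omega(m)$; consequently every non-identity element of $(G_m)_\omega$ is an involution, so $(G_m)_\omega$ is an elementary abelian $2$-group, and it remains only to bound its order by $2$. Suppose $\J_1\ne\J_2$ were involutions of $G_m$ both fixing $\omega$. Their product $\J_1\J_2$ also fixes $\omega$ and is non-trivial, so by the same structural fact it must itself be an involution; hence $\J_1$ and $\J_2$ commute. But commuting involutions in $G$ have disjoint blocks (established in Section~\ref{struct}), contradicting $\omega\in\Omega^{\J_1}\cap\Omega^{\J_2}$. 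Therefore $(G_m)_\omega$ contains at most one involution, so $(G_m)_\omega\in\cent_2\cup\eye$. Combining the two cases yields $G_m\cap P_\omega\in\para(l)\cup\cent_2\cup\eye$.

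I do not expect a genuine obstacle here: once $G_m\cap P_\omega$ is identified with $(G_m)_\omega$, everything follows from the description of fixed-point sets of elements of $G_m$ and the disjointness of blocks of commuting involutions, both already in hand. The only step needing a little care is the second case, where one must combine the observation that a product of two elements fixing $\omega$ again fixes $\omega$ with the constraint that elements of $G_m$ fixing points outside $\Omega(m)$ are involutions, in order to collapse $(G_m)_\omega$ all the way down to order at most $2$.
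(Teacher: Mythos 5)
Your proof is correct and follows essentially the same route as the paper: split on whether $\omega\in\Omega(m)$, get a parabolic of $G_m$ in the first case, and in the second case use the fact that only involutions of $G_m$ fix points outside $\Omega(m)$. Your additional argument that two distinct involutions fixing $\omega$ would have to commute (forcing disjoint blocks, a contradiction) supplies a justification the paper leaves implicit when it asserts the intersection lies in $\cent_2\cup\eye$ rather than merely in an elementary abelian $2$-group.
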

\begin{proof} Let $P_\omega$ be the stabiliser of $\omega \in \Omega$. If $\omega \in \Omega(m)$ then the intersection of $G_m$ with $P_\omega$ will be the stabiliser of $\omega$ in $G_m$, belonging to $\para(l)$. Otherwise, since only involutions in $G_m$ can fix an $\omega \notin \Omega(m)$, we get a subgroup of an element of $\cent_2$. \end{proof}

\begin{lemma} \label{sc} The intersection of $G_m$ with an involution centraliser belongs to $\cent_\J(l) \cup \eff(l) \cup \dee_H(l) \cup \vier \cup \cent_3^* \cup \cent_2 \cup \eye$. \end{lemma}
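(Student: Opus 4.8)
The plan is to set $I=G_m\cap C$, where $C=C_G(\J)$ for an involution $\J$, to note that $I=C_{G_m}(\J)$ regardless of whether $\J$ lies in $G_m$, and to split into the cases $\J\in G_m$ and $\J\notin G_m$. If $\J\in G_m$, then since $G_m\cong R(3^m)$ has a single conjugacy class of involutions, $I$ is an involution centraliser of $G_m$: this is $2\times L_2(3^m)\in\cent_\J(l)$ when $m>1$ and an involution centraliser of $R(3)$, hence a member of $\cent_\J(1)$, when $m=1$. So the substance is the case $\J\notin G_m$, in which $\langle\J\rangle\cap G_m=1$, so $I$ embeds via $C/\langle\J\rangle\cong L_2(q)$ into $PSL_2(q)$ acting on the block $\Omega^\J$ as on the projective line $\mathbb{P}^1(\FF_q)$; write $\bar I\cong I$ for the image.

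The engine is a fixed-point count on $\Omega^\J$. First, $\Omega^\J$ meets $\Omega(m)$ in at most one point: two distinct points of $\Omega(m)$ are pointwise fixed by a unique involution of $G$, which is the unique involution of $G_m$ fixing them and so lies in $G_m$, contrary to $\J\notin G_m$. Now every element of $I$ of order exceeding $2$ fixes points of $\Omega$ only inside $\Omega(m)$, while in $PSL_2(q)$ (with $q=3^n\equiv3\pmod4$, so that $(q-1)/2$ is odd) a nontrivial element of a split torus fixes two points of $\mathbb{P}^1$, a nontrivial element of a non-split torus fixes none (and these tori contain all the involutions), and a unipotent fixes exactly one. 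Hence $\bar I$ has no nontrivial split-torus element, and, since each order-$3$ element of $I$ has order $3$ with a single fixed point lying in $\Omega^\J\cap\Omega(m)=\{\omega_0\}$, every $3$-element of $\bar I$ fixes $\omega_0$ and so lies in the unipotent radical $U$ of the point stabiliser $B_{\omega_0}\leq PSL_2(q)$. Feeding this into Dickson's list \cite{dick}: if $\bar I$ has a $3$-element, then the subgroup generated by the $3$-elements of $\bar I$ is elementary abelian and contained in $U$, and an involution of $\bar I$ would normalise it and hence fix $\omega_0$, which is impossible, so $\bar I$ equals that elementary abelian $3$-group (this also excludes $A_4$, $S_4$, $A_5$, $PSL_2(3^b)$ and $PGL_2(3^b)$); otherwise $\bar I$ has no $3$-element and, being free of nontrivial split-torus elements, is cyclic or dihedral with cyclic part in a non-split torus.

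In the first alternative $I\cong3^a$, its nontrivial elements lie in $\C_3^*$ and fix $\omega_0\in\Omega(m)$; one identifies $I$ with $G_m\cap U$ and checks that it has the order of the Sylow $3$-subgroup of an involution centraliser, placing it in $\eff(l)\cup\cent_3^*$. In the second alternative I translate the cyclic part $\langle g\rangle\leq G_m$ back: its order divides $(q+1)/2$ and is prime to $3$, so by the element orders of $R(3^m)$ it is of $A_1$-, $A_2$- or $A_3$-type. If it is of $A_1$-type, then $g$ lies in a Hall $A_1$ of $G$, whose centraliser has the form $V\times A_1$ with $V$ a four-group; as $\J$ centralises $g$ it lies in $V$, and comparing with $C_{G_m}(\langle g\rangle)$ forces $V\leq G_m$, hence $\J\in G_m$, a contradiction. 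Thus $\langle g\rangle$ is of $A_2$- or $A_3$-type, of order dividing $a_2(m)$ or $a_3(m)$; by Lemma \ref{numb} this is nontrivial only when $3m\mid n$, in which case $a_2(m)$ (resp. $a_3(m)$) divides $a_1(n)=q+1$, so $g$ is again of $A_1$-type in $G$ and $\J$ lies in the corresponding four-group $V$ of $G$, which this time meets $G_m$ trivially. One then checks that $\J$ centralises the whole Hall torus of $G_m$ through $g$ together with the involution of $G_m$ inverting it — both lie in the direct factor $D_{(q+1)/2}$ of $C_G(V)$, which $\J$ centralises — so that $I$ contains, and hence equals, a dihedral group $D_{2a_2(m)}$ or $D_{2a_3(m)}\in\dee_H(l)$. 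The residual small cases, where $\bar I$ is trivial, of order $2$, or a four-group, give $\eye$, $\cent_2$ or $\vier$, and the analysis is complete.

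The step I expect to be the main obstacle is this last back-translation: passing from the bare isomorphism type of $\bar I\leq PSL_2(q)$ to enough information about centralisers of semisimple elements in $R(q)$, and enough number theory through Lemma \ref{numb}, to decide precisely which Hall torus $g$ belongs to and to force the intersection up to the full $\dee_H(l)$ rather than a proper cyclic subgroup of it; the identification of the elementary abelian $3$-case with a member of $\eff(l)$ likewise needs a short but careful count.
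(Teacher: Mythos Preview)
Your proof is correct and follows essentially the same approach as the paper: both analyse $H = G_m \cap C_G(\J)$ through the intersection $\Omega^\J \cap \Omega(m)$ and the subgroup structure of $L_2(q)$, reaching the same trichotomy (involution centraliser, elementary abelian $3$-group, or dihedral in the non-split torus). Your organisation differs only cosmetically---you split first on $\J \in G_m$ and then invoke Dickson's list and the Hall-subgroup centraliser structure explicitly, whereas the paper splits directly on $|\Omega^\J \cap \Omega(m)|\in\{\geq 2,1,0\}$---but the substance and the two steps you flag as needing care are precisely where the paper is also terse.
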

\begin{proof} Let $\J \in G$ be an involution, $C=C_G(\J)$ its centraliser in $G$ and $H=G_m \cap C$, stabilising $\Omega(m) \cup \Omega^\J$. If $\Omega^\J$ intersects $\Omega(m)$ in at least 2 points then $\J \in G_m$ and $H$ is the centraliser in $G_m$ of $\J$ belonging to $\cent_\J(l)$. If $\Omega^\J \cap \Omega(m) = \{\omega\}$ then $\J \notin G_m$ and the only nontrivial elements of $C$ which can fix $\omega$ are the elements of order 3, so that $H \in \eff(l) \cup \cent_3^* \cup \eye$.

Suppose now that $\Omega^\J \cap \Omega(m) = \varnothing$. Then $\J \notin G_m$ and $H$ is isomorphic to a subgroup of $L_2(q)$ not containing elements of order 3, $k$ or $2k$ where $k>3$ divides $(q-1)/2$, hence $H$ is a subgroup of $D_{q+1}$. If $H$ is nontrivial and does not contain elements of order $k>2$ dividing $(q+1)/4$ then $H \sg V$ for some $V \in \vier$ and belongs to our list. If there exists $s \in H$ of order $k$ then $k$ divides $a_1(m)/4$ or, if 3 divides $m$, $a_2(m/3)a_3(m/3)$. In the former case, the involutions centralising $s$ belong to $G_m$ so that $s \notin H$. In the latter case, involutions which centralise $s$ do not belong to $G_m$, but involutions which normalise $s$ in $G$ will belong to $G_m$, hence $H$ is isomorphic to $D_{2a_2(m/3)}$ or $D_{2a_3(m/3)}$.\end{proof}

\begin{lemma} The intersection of $G_m$ with an element of $\norm_V \cup \norm_2 \cup \norm_3$ belongs to $\norm_V(l) \cup \norm_2(l) \cup \norm_3(l) \cup \cent_6^* \cup \cent_3^* \cup \cent_2 \cup \eye$. \end{lemma}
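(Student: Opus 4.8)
The plan is to treat the three families uniformly, exploiting that each such maximal subgroup $M$ is the normaliser in $G$ of a cyclic Hall subgroup $\langle a\rangle$: a conjugate of $A_2$ when $M\in\norm_2$, of $A_3$ when $M\in\norm_3$, and, when $M\in\norm_V$, a conjugate of $A_1$ — here using the observation from the analysis of four-group normalisers in \S\ref{struct} that $N_G(V)=N_G(\langle s\rangle)$ for a Hall subgroup $\langle s\rangle$ conjugate to $A_1$. The crucial structural fact is that $\langle a\rangle$ is \emph{characteristic} in $C_G(b)$ for every non-trivial $b\in\langle a\rangle$: the group $C_G(b)$ is cyclic of order $a_2(n)$ or $a_3(n)$ in the first two cases and is $2^2\times\langle s\rangle$ in the third, so in each case $\langle a\rangle$ is the unique Hall $\{2,3\}'$-subgroup of $C_G(b)$.

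First I would establish the reduction $M\cap G_m=N_{G_m}(B)$ with $B:=\langle a\rangle\cap G_m$. Here $M\cap G_m\sg N_{G_m}(B)$ is immediate, since any $g\in M\cap G_m$ normalises both $\langle a\rangle$ and $G_m$, hence $B$; and conversely, if $B\neq 1$ and $g\in N_{G_m}(B)$ then $g$ normalises $C_G(B)$ and therefore its characteristic subgroup $\langle a\rangle$, so $g\in N_G(\langle a\rangle)=M$. Next I would pin down $B$, which is a subgroup of the cyclic $\{2,3\}'$-group $\langle a\rangle$, hence cyclic. If $B\neq 1$, choose $b\in B$ of prime order $p$ and let $A_j(m)$ be the (unique) Hall $\{2,3\}'$-subgroup of $G_m$ whose order is divisible by $p$; then $A_j(m)\sg C_{G_m}(b)\sg C_G(b)$, and since $\langle a\rangle$ is the unique Hall $\{2,3\}'$-subgroup of $C_G(b)$ we get $A_j(m)\sg\langle a\rangle$, hence $A_j(m)\sg B$. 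As the cyclic $\{2,3\}'$-group $B$ lies in a single Hall subgroup of $G_m$, that subgroup must be $A_j(m)$, so $B=A_j(m)$. Thus $B$ is trivial or a full Hall subgroup of $G_m$, and Lemma~\ref{numb} (with $l=m$) records which index $j$ can occur: for $M\in\norm_2\cup\norm_3$ the value $j=1$ is excluded, because $a_1(m)=3^m+1$ divides $a_1(n)$ while $\gcd\!\big(a_1(n),a_2(n)\big)=\gcd\!\big(a_1(n),a_3(n)\big)=1$, so here $B$ is trivial or conjugate to $A_2(m)$ or $A_3(m)$; for $M\in\norm_V$ any of $j=1,2,3$ may occur, with $j\in\{2,3\}$ only when $3\mid n/m$.

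If $B\neq 1$ then $M\cap G_m=N_{G_m}(A_j(m))$: for $j=2,3$ this is $a_j(m)\colon6\in\norm_2(l)\cup\norm_3(l)$, and for $j=1$ it is the four-group normaliser $(2^2\times D_{(3^m+1)/2})\colon3\in\norm_V(l)$, using the same Hall/four-group correspondence inside $G_m$. If $B=1$, then $M\cap G_m$ is a $\{2,3\}$-subgroup of $M$, and I would use that the Sylow $3$-subgroup of $M$ is generated by a $\C_3^*$-element and that $M$ modulo its index-$3$ normal subgroup acts faithfully on the three involutions of $V$ (resp.\ as the inversion of $\langle a\rangle$). For $M\in\norm_2\cup\norm_3$ this forces $M\cap G_m$ into a cyclic subgroup of order dividing $6$, i.e.\ into $\cent_6^*\cup\cent_3^*\cup\cent_2\cup\eye$. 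For $M\in\norm_V$, since $B=1$ forces $V\not\sg G_m$, at most one involution of $V$ lies in $G_m$; the image of $M\cap G_m$ in $M/C_G(V)\cong 3$ is either trivial or everything, any element mapping onto a $3$-cycle has order $3$ or $6$ and so lies in $\C_3^*$ or $\C_6^*$, and the kernel part is $M\cap G_m\cap C_G(V)=G_m\cap C_G(V)$, an elementary abelian $2$-group.

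The main obstacle is this last kernel: controlling $G_m\cap C_G(V)$ when exactly one involution $\J\in V$ lies in $G_m$. Here $M\cap G_m$ fixes $\J$, so $M\cap G_m=G_m\cap C_G(V)$, and the task is to show — via the geometry of $\Omega$ — that no second involution of $C_G(V)\cong 2^2\times D_{(q+1)/2}$ lies in $G_m$, so that $M\cap G_m=\langle\J\rangle\in\cent_2$. The tools available are: commuting involutions of $G$ have disjoint blocks; $\J$ fixes exactly $3^m+1$ points of $\Omega(m)$; and a second involution $\J'\in C_G(V)\cap G_m$ would produce inside $C_G(\J)=2\times L_2(q)$, with its subfield subgroup $C_{G_m}(\J)=2\times L_2(3^m)$, a four-group of $L_2(q)$ meeting $L_2(3^m)$ in a single involution, which must be ruled out by the explicit description of $C_G(V)$ and the block combinatorics in \S\ref{struct}. (If this step only yields that $M\cap G_m$ is then a four-group of $G_m$, it lies in $\vier$, and Theorem~\ref{int} still follows.) I would expect the bookkeeping for $\norm_2,\norm_3$ and the $B\neq 1$ cases to be routine, with essentially all the difficulty concentrated in this single sub-case of $\norm_V$.
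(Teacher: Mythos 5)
Your main line is, in substance, a more careful version of the paper's own proof. The paper splits on whether the generator $a$ of the relevant Hall subgroup lies in $G_m$: if it does, the intersection is $N_{G_m}(a)$, and otherwise it is said to embed in $N_G(a)/C_G(a)\cong C_6$. Your reduction to $B=\langle a\rangle\cap G_m$, the proof that $B$ is trivial or a full Hall subgroup of $G_m$ (modulo the small fix that one should take the conjugate Hall subgroup of $G_m$ actually containing the chosen element $b$), and the identification $M\cap G_m=N_{G_m}(B)$ when $B\neq1$, is in fact an improvement on the paper: for $M\in\norm_2\cup\norm_3$ and $m<n$ one never has $a\in G_m$, yet the intersection can be $a_j(m)\colon6$, so the paper's literal dichotomy misses exactly the case you treat via Lemma~\ref{numb} and the characteristic-subgroup argument. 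Your $B=1$ analysis for $\norm_2\cup\norm_3$ (embedding in the $C_6$ quotient) coincides with the paper's.

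The sub-case you flag as the main obstacle --- $M\in\norm_V$, $B=1$, with one involution of $V$ in $G_m$ --- is, however, left unproved, and relative to the stated lemma that is a genuine gap: your fallback conclusion ($\vier$) is not in the lemma's list. Moreover, the claim you hope to extract from the block combinatorics is false, so this gap cannot be closed as you intend. Work inside $C_G(\J)=\langle\J\rangle\times L$ with $L\cong L_2(q)$ and $C_{G_m}(\J)=\langle\J\rangle\times L_m$, $L_m\cong L_2(3^m)$: pick an involution $\ell\in L_m$ and an involution $\ell'\in C_L(\ell)\setminus L_m$ (possible, since $C_L(\ell)\cong D_{q+1}$ has $(q+1)/2$ non-central involutions while only those of $C_{L_m}(\ell)\cong D_{3^m+1}$ lie in $L_m$). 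Then $V=\langle\J,\ell'\rangle$ is a four-group with $V\not\sg G_m$, yet $\langle\J,\ell\rangle\sg G_m\cap C_G(V)$; and since every $g\in G_m\cap N_G(V)$ must fix $\J$, one gets $G_m\cap N_G(V)=\langle\J\rangle\times C_{L_m}(\ell')$, a $2$-group containing a four-group (for $G=R(27)$, $G_m=R(3)$ it is exactly $2^2$, as $C_{A_4}(\ell')=\langle\ell\rangle$). So in this sub-case the intersection genuinely falls outside the lemma's stated list, and the correct resolution is your fallback: it is an elementary abelian $2$-group, hence lies in $\vier\cup\ee\cup\cent_2\cup\eye\subset\mi$, so Theorem~\ref{int} is unaffected even though the lemma's list (and the paper's one-line disposal of the case $a\notin G_m$, which overlooks involutions of $C_G(V)$ lying in $G_m$) needs to be enlarged accordingly.
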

\begin{proof} Let $a$ generate any Hall subgroup conjugate to $A_i$, where $i=1,2,3$. If $a \in G_m$, then the intersection is equal to the normaliser in $G_m$ of $a$ which belongs to $\norm_V(l) \cup \norm_2(l) \cup \norm_3(l)$. If $a \notin G_m$ then, since the centraliser in $G$ of $a$ is uniquely contained in its normaliser in $G$, we have that the intersection is a subgroup of $N_G(a)/C_G(a) \cong C_6$, and so is a subgroup of an element of $\cent_6^*$.
\end{proof}

\begin{lemma} The intersection of two distinct maximal subfield subgroups belongs to our list. \end{lemma}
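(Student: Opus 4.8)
The plan is to analyse the action of $H=G_m\cap G_{m'}$ on $\Omega$, exactly as in the preceding lemmas, and in each case to reduce to an intersection already treated or to the classical subgroup structure of $L_2(q)$. Since $G_m$ and $G_{m'}$ are maximal, $n/m$ and $n/m'$ are prime and $m,m'<n$; then $\Omega(m)$ is a proper subset of $\Omega$, and as $G$ is transitive on $\Omega$ the setwise stabiliser of $\Omega(m)$ in $G$ is a proper subgroup containing the maximal subgroup $G_m$, hence equals $G_m$, and likewise for $G_{m'}$. Because $\Omega(m)$ is determined by $G_m$ as an abstract group, $G_m=G_{m'}$ if and only if $\Omega(m)=\Omega(m')$; so in our situation $\Omega(m)\neq\Omega(m')$, the group $H$ is proper in each of $G_m$ and $G_{m'}$, and $H$ stabilises each of $\Omega(m)$, $\Omega(m')$ and $\Omega(m)\cap\Omega(m')$ setwise.

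I would then split the argument according to which geometric object $H$ stabilises. If $H$ fixes a point $\omega$, then $H\sg P_\omega$ and $H=(G_m\cap P_\omega)\cap(G_{m'}\cap P_\omega)$; by the lemma on the intersection of a subfield subgroup with a parabolic each factor is a smaller parabolic or lies in $\cent_2\cup\eye$, and, using the arithmetic of nested subfields inside $P_\omega\cong 3^{3n}\colon(3^n-1)$, the intersection of two such parabolic factors again lies in one of the classes of $\mi$ whose members can be contained in a parabolic, namely $\para(l)$, $\eff(l)$, $\cent_0(l)$, $\cent_6^*$, $\cent_3^*$, $\cent_2$ or $\eye$. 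If $H$ fixes no point but stabilises the block $\Omega^\J$ of an involution $\J$, then $H\sg C_G(\J)$ and, applying Lemma \ref{sc} to both factors, $H$ is the intersection inside $C_G(\J)\cong 2\times L_2(q)$ of two subgroups of the shape described there; since the subgroup lattice of $L_2(q)$ and the way two of its subfield subgroups meet are classical \cite{dick,psl2}, and using Lemma \ref{numb} to control the dihedral orders that arise, $H$ falls into $\cent_\J(l)\cup\cent_\J^\omega(l)\cup\eff(l)\cup\dee_H(l)\cup\vier\cup\cent_3^*\cup\cent_2\cup\eye$. If $H$ stabilises neither a point nor a block, then $H$ contains a non-trivial element with no fixed points, which by the structural analysis of Section \ref{struct} forces $H$ into a four-group normaliser or into the normaliser of a Hall subgroup $A_2$ or $A_3$; the corresponding intersection lemmas, together again with Lemma \ref{numb}, then place $H$ in $\norm_V(l)\cup\cent_V(l)\cup\norm_2(l)\cup\norm_3(l)\cup\dee_H(l)\cup\vier\cup\cent_6^*\cup\cent_2\cup\eye$. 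Failing all of this, an order estimate confines $H$ to a four-group, a cyclic group of order $2$, $3$ or $6$, or the identity.

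The step I expect to be the main obstacle is the subcase $m=m'$, in which $G_m$ and $G_{m'}$ are distinct $G$-conjugates of one and the same subfield subgroup $R(3^m)$ and there is no common subfield along which to ``align'' them: here one must determine precisely how two sub-unitals $\Omega(m)$ and $\Omega(m')$ of equal size can meet --- in a proper sub-unital, a single block, a pair of points, a single point, or not at all --- and confirm that each possibility yields only a point stabiliser, a block stabiliser, a four-group, a cyclic group or the identity. A secondary, purely organisational difficulty is to check, keeping in mind the repetitions recorded in the remark preceding Theorem \ref{int}, that none of the reductions above produces a subgroup outside the fifteen classes comprising $\mi$.
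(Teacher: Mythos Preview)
Your approach would eventually succeed, but it is far more elaborate than the paper's. The paper dispatches this lemma in two sentences: since $G_m\neq G_{m'}$ are maximal, their intersection is a proper subgroup of $G$ and hence lies in some maximal subgroup $H$ of $G$. Either the intersection is itself a subfield subgroup (so lies in $\R(l)$ and we are done), or $H$ may be taken in $\para\cup\cent_\J\cup\norm_V\cup\norm_2\cup\norm_3$; then $G_m\cap G_{m'}=(H\cap G_m)\cap G_{m'}$, and the three preceding lemmas of the section already classify $H\cap G_m$, reducing the problem to intersections of a subfield subgroup with a non-subfield maximal, which have just been done. There is no geometric case split on $\Omega$, no appeal to the subgroup lattice of $L_2(q)$, and --- crucially --- the situation $m=m'$ that you flag as the main obstacle is handled uniformly: nothing in the argument cares whether the two subfield subgroups have the same or different field degrees, so your worry about how two conjugate sub-unitals $\Omega(m)$ can meet simply does not arise.

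Your direct geometric analysis also carries a gap. In case~(c) you pass from ``$H$ stabilises neither a point nor a block'' to ``$H$ contains a non-trivial element with no fixed points''. This implication is not immediate: $H$ could in principle consist entirely of elements each of which individually has fixed points (involutions, or elements of order $3$, $6$, $9$, or dividing $q-1$) without $H$ globally fixing a point or stabilising a block. The correct way to reach a four-group or Hall-subgroup normaliser here is exactly the paper's move --- $H$ lies in \emph{some} maximal subgroup, and if that maximal is neither parabolic, nor an involution centraliser, nor a subfield subgroup, then it must be one of the remaining types. Once you phrase it that way, your three cases collapse back into the paper's one-line reduction.
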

\begin{proof} Let $G_m \neq G_l$ be maximal subfield subgroups of $G$. Their intersection will be a subgroup of a maximal subgroup of $G$. If their intersection belongs to $\R(l)$, then it belongs to $\mi$, otherwise their intersection will be a subgroup of $H \in \para \cup \cent_\J \cup \norm_V \cup \norm_2 \cup \norm_3$. In particular, it will be the intersection of $H \cap G_m$ with $G_l$ which we have dealt with in the preceding lemmas in this Section. \end{proof}

\subsection{Intersections with parabolic subgroups} \label{pint}

\begin{lemma} The intersection of two distinct parabolic subgroups belongs to $\cent_0$. \end{lemma}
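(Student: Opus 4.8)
The plan is to use that, in the $2$-transitive action of $G$ on $\Omega$, the parabolic subgroups are precisely the point stabilisers, so that two distinct parabolic subgroups are $P_\omega$ and $P_{\omega'}$ with $\omega\neq\omega'$ in $\Omega$, and $P_\omega\cap P_{\omega'}$ is exactly the stabiliser of the ordered pair $(\omega,\omega')$. First I would determine its order: from the shape $q^{1+1+1}\colon(q-1)$ we have $\vert P_\omega\vert=q^3(q-1)$, while $[G:P_\omega]=\vert\Omega\vert=q^3+1$, so $\vert G\vert=q^3(q-1)(q^3+1)$; since $G$ is transitive on the $(q^3+1)q^3$ ordered pairs of distinct points of $\Omega$ this forces $\vert P_\omega\cap P_{\omega'}\vert=q-1$.

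It then remains to identify this subgroup of order $q-1$ with a member of $\cent_0$, and I would do this by producing it explicitly. The conjugacy class data of Section~\ref{struct} furnishes an element $\J r$ of $G$ of order $q-1$, the commuting product of an involution with a generator $r$ of a Hall subgroup $A_0=\langle r\rangle$, and such an element fixes exactly two points of $\Omega$. Hence $\langle\J r\rangle$ is a cyclic subgroup of order $q-1$ lying in the stabiliser of those two points, and by the order computation it equals that stabiliser; by $2$-transitivity every intersection of two distinct parabolic subgroups is conjugate to $\langle\J r\rangle$, so it suffices to see $\langle\J r\rangle\in\cent_0$. But $\langle\J r\rangle$ is abelian and contains $A_0$, hence centralises it, and $C_G(A_0)$ is itself cyclic of order $q-1$ by the conjugacy class data; therefore $\langle\J r\rangle=C_G(A_0)\in\cent_0$.

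I do not expect a genuine obstacle here; the only point needing care is that the two-point stabiliser is actually cyclic (and self-centralising), rather than merely of order $q-1$, which is precisely what producing the element $\J r$ delivers. For use in the later intersection arguments it is also worth recording that, since $\gcd(q-1,3)=1$, the subgroup $\langle\J r\rangle$ meets the unipotent radical $q^{1+1+1}$ of $P_\omega$ trivially and hence maps isomorphically onto the cyclic quotient of order $q-1$; thus $\langle\J r\rangle$ is a complement to the Sylow $3$-subgroup of $P_\omega$.
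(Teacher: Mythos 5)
Your proof is correct and follows the same basic route as the paper: both identify $P_\omega \cap P_{\omega'}$ as the stabiliser of an ordered pair of points and appeal to the classification of elements of $G$ by their fixed-point sets on $\Omega$. The paper's own proof is a single sentence (Sylow $3$-subgroups intersect trivially, and anything in the intersection fixes two points), and your version is in fact more complete, since the order count via $2$-transitivity together with the explicit exhibition of $\langle \J r\rangle$ is what actually shows the intersection is the \emph{whole} cyclic group of order $q-1$, i.e.\ a member of $\cent_0$, rather than a proper subgroup of one.
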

\begin{proof} The Sylow 3-subgroups of $G$ have trivial intersection and anything lying in two distinct parabolic subgroups must pointwise fix two points. \end{proof}

\begin{lemma} The intersection of a parabolic subgroup with an involution centraliser belongs to $\cent_\J^\omega \cup \cent_2$. \end{lemma}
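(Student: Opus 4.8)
The plan is to note that, writing $P_\omega$ for the stabiliser of the point $\omega$ fixed by the given parabolic subgroup and $C = C_G(\J)$ for the involution centraliser, the intersection $H = P_\omega \cap C$ is exactly the stabiliser of $\omega$ in $C$; everything is then controlled by the action of $C$ on $\Omega$, and I would split into the two cases $\omega \in \Omega^\J$ and $\omega \notin \Omega^\J$.

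In the case $\omega \in \Omega^\J$ I would argue as follows. The central subgroup $\langle\J\rangle$ fixes $\Omega^\J$ pointwise, so writing $C = \langle\J\rangle \times L$ with $L \cong L_2(q)$, an element $\J^{\varepsilon}\ell$ of $C$ fixes $\omega$ if and only if $\ell$ does; hence $H = \langle\J\rangle \times L_\omega$. Since $C$, and therefore $L$, acts on $\Omega^\J$ in the natural $2$-transitive action of $L_2(q)$ on the $q+1$ points of the projective line, $L_\omega$ is a Borel subgroup $q\colon\frac{q-1}{2}$, and so $H \cong 2\times(q\colon\frac{q-1}{2}) \in \cent_\J^\omega$, as required.

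In the case $\omega \notin \Omega^\J$ I would show $|H| = 2$. First, $\J$ does not fix $\omega$, so $\J \notin H$. Next, $H$ contains no element of order $3$: any order-$3$ element of $C$ lies in $\C_3^*$ and so has its unique fixed point inside $\Omega^\J$, hence cannot fix $\omega$. Therefore $H$ meets the Sylow $3$-subgroup $U \nsg P_\omega$ trivially, and since $P_\omega/U$ is cyclic of order $q-1$ this forces $H$ to be cyclic of order dividing $q-1$. But $H$ contains no element of order $k$ with $2 < k \mid q-1$ either, since such an element of $C$ fixes precisely two points of $\Omega$, both in $\Omega^\J$. Hence $|H| \leq 2$; and $H \neq 1$ because $\omega$ lies in the block of some involution of $C$, which then lies in $P_\omega \cap C$. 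Thus $H \in \cent_2$.

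The only genuinely delicate point is the claim in the first case that the image of $C$ acting on $\Omega^\J$ is $L_2(q)$ in its natural action — equivalently, that the kernel of this action is exactly $\langle\J\rangle$ and that a subgroup of $L_2(q)$ of index $q+1$ is a Borel. This follows from the simplicity of $L_2(q)$ (so the kernel of a nontrivial action cannot be a proper nontrivial subgroup) together with Dickson's description of the subgroups of $L_2(q)$ \cite{dick}; the second case uses nothing beyond the fixed-point data assembled in Section~\ref{struct}.
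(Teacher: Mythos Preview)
Your proof is correct and follows essentially the same approach as the paper: the same case split (which is equivalent to the paper's split on whether $\J \in P_\omega$), the same identification of the point stabiliser in $C$ when $\omega\in\Omega^\J$, and the same use of the fixed-point data on $\Omega^\J$ together with the fact that every $\omega$ lies in the block of some involution of $C$ to pin down $|H|=2$ when $\omega\notin\Omega^\J$. You have simply filled in more of the details (the structure of $L_\omega$ via the $L_2(q)$-action, and the reduction $H\hookrightarrow P_\omega/U$) than the paper's terse version.
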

\begin{proof} Let $P_\omega$ be the stabiliser of $\omega \in \Omega$ and for an involution $\J \in G$ let $C=C_G(\J)$ its centraliser in $G$. Since all involutions in $P$ are conjugate we need only distinguish the cases when $\J$ is an element of $P_\omega$ or not. If $\J \in P_\omega$ then $\omega \in \Omega^\J$ and the intersection is the stabiliser of $\omega$ in $C$, isomorphic to $2 \times (q\colon\frac{q-1}{2})$. If $\J \notin P_\omega$ then $\omega \notin \Omega^\J$ and the only elements in $C$ which will share a fixed point with $P_\omega$ are the involutions. Since the blocks of involutions in $C$ are disjoint and since every element of $\Omega$ belongs to the block of some involution in $C$, the intersection will belong to $\cent_2$. \end{proof}

\begin{lemma} The intersection of a parabolic subgroup with an element of $\norm_V \cup \norm_2 \cup \norm_3$ belongs to $\cent_6^* \cup \cent_3^* \cup \cent_2 \cup \eye$. \end{lemma}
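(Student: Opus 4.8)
The plan is to exploit the fact that every element of the intersection fixes the point $\omega$ stabilised by $P_\omega$, and then to combine the permutation-theoretic descriptions of the $\norm_V$-, $\norm_2$- and $\norm_3$-subgroups obtained in the preceding subsections with the internal structure of $P_\omega$.

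Write $H=P_\omega\cap N$, where $N$ is a maximal subgroup of one of these three types. Every element of $H$ fixes $\omega$, so $H$ consists entirely of elements of $N$ that have a fixed point on $\Omega$. From the analysis of four-group normalisers and of normalisers of $A_2,A_3$, the only such elements of $N$ are the identity, involutions (each fixing a block), elements of $\C_3^*$, and elements of order $6$ (each fixing a single point); in particular $H$ has exponent dividing $6$. In the order-$3$ case one must check that the generator lies in $\C_3^*$ rather than $\C_3^0$: in each type of $N$ an order-$3$ element centralises an involution of $N$ — in the $\norm_2,\norm_3$ case the involution inside the cyclic group of order $6$, and in the $\norm_V$ case a conjugate of $\tau$, since such a $C_3$ sits inside a subgroup $C_N(\tau)\cong 2\times L_2(3)$ — whereas a $\C_3^0$-element has a $3$-group as its centraliser in $G$. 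For the order-$6$ case there is nothing further to check, since every order-$6$ element of $G$ lies in $\C_6^+\cup\C_6^-$ and all involutions of $G$ are conjugate.

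Next I would bound $H$ using $P_\omega=q^{1+1+1}\colon(q-1)$. Let $Q=O_3(P_\omega)$ be its (normal) Sylow $3$-subgroup, so $P_\omega/Q$ is cyclic of order $q-1$. Then $H\cap Q\nsg H$ is a $3$-group, hence $|H\cap Q|\le 3$, since a Sylow $3$-subgroup of $N$ has order $3$ (as $3\nmid q+1$). Also $H/(H\cap Q)$ embeds in the cyclic group $P_\omega/Q$ of order $q-1$, which has order coprime to $3$ because $3\nmid q-1$; together with exponent dividing $6$ this forces $|H/(H\cap Q)|\le 2$. Hence $|H|\le 6$, and $H$ has a normal Sylow $3$-subgroup of order $1$ or $3$ with cyclic complement of order $1$ or $2$. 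Therefore $H$ is trivial, or lies in $\cent_2$, $\cent_3^*$ or $\cent_6^*$, or is isomorphic to $S_3$.

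It remains to rule out $H\cong S_3$. In each case $N$ has a normal subgroup $M$ with $N/M$ cyclic and $3\nmid|M|$ — namely $M=A_i$ in the $\norm_2,\norm_3$ cases and $M=2^2\times D_{(q+1)/2}$ in the $\norm_V$ case. If $H\cong S_3$, then its derived subgroup, cyclic of order $3$, lies in $[N,N]\le M$, forcing $3\mid|M|$, a contradiction. This finishes the proof. I expect the genuinely delicate step to be the first one, namely pinning down exactly which elements of each of the three types of $N$ have a fixed point on $\Omega$ (and the attendant $\C_3^*$/$\C_3^0$ distinction); this rests on the permutation-group analysis established earlier, after which the order bound on $H$ and the exclusion of $S_3$ are routine.
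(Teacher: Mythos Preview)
Your argument is correct, but it is considerably more elaborate than what the paper does. The paper's entire proof is the single sentence ``This is clear from the comparison of orders'': one simply notes that $\gcd\bigl(|P_\omega|,|N|\bigr)=\gcd\bigl(q^3(q-1),\,6(q+1)\bigr)=6$ in the $\norm_V$ case, and similarly $\gcd\bigl(q^3(q-1),\,6a_i\bigr)=6$ for $i=2,3$, so $|H|\le 6$; the identification of the possible subgroups is then left to the structural facts about $N$ already recorded in Section~\ref{struct}.

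Your route is genuinely different in execution. Rather than computing a gcd, you first use the permutation action to restrict the \emph{element types} in $H$ (exponent dividing $6$, order-$3$ elements in $\C_3^*$), then use the filtration $Q\nsg P_\omega$ to bound $|H|$, and finally exclude $S_3$ via the derived subgroup. This has the merit of making explicit two points the paper leaves to the reader: that an order-$3$ element of $N$ lies in $\C_3^*$ rather than $\C_3^0$, and that $N$ contains no copy of $S_3$. Both are indeed needed for $H$ to land in $\cent_6^*\cup\cent_3^*\cup\cent_2\cup\eye$ rather than some other class, and your derived-subgroup argument for the latter is clean. That said, your Step~1 (fixed-point analysis) is doing more work than strictly necessary: once one knows $|H|\le 6$ from the gcd, the only facts still required are that order-$3$ elements of $N$ lie in $\C_3^*$ (stated directly in Section~\ref{struct}) and that $N/M$ is cyclic with $3\nmid|M|$, which immediately rules out $S_3$ as you observe. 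So the paper's approach is shorter, while yours is more self-contained.
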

\begin{proof} This is clear from the comparison of orders. \end{proof}

\subsection{Intersections with involution centralisers} \label{cint}

\begin{lemma} The intersection of two distinct involution centralisers belongs to $\cent_\vier \cup \eff \cup \vier \cup \cent_2 \cup \eye$. \end{lemma}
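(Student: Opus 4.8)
The plan is to fix involutions $\J_1\ne\J_2$ with centralisers $C_i=C_G(\J_i)$, which are distinct since $Z(C_i)=\langle\J_i\rangle$, and to set $H=C_1\cap C_2$. The starting point is that $g\in G$ centralises both $\J_1$ and $\J_2$ precisely when $g$ centralises $K=\langle\J_1,\J_2\rangle$, so $H=C_G(K)$. If $\J_1$ and $\J_2$ commute, then $K=V$ is a four-group and, using the description of four-group normalisers in Section~\ref{struct} (where $N_G(V)=(2^2\times D_{(q+1)/2})\colon3$), the group $H=C_G(V)$ is the index-$3$ subgroup $2^2\times D_{(q+1)/2}$, so $H\in\cent_\vier$. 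So assume from now on that $\J_1$ and $\J_2$ do not commute. Then $K=D$ is dihedral of order $2k$ with $k=o(\rho)\ge3$, where $\rho=\J_1\J_2$; moreover $\J_1$ inverts $\rho$, hence normalises $C_G(\langle\rho\rangle)=C_G(\rho)$, and $H=C_G(D)=C_{C_G(\rho)}(\J_1)$.

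As $\rho$ is conjugate to $\rho^{-1}$, the list of conjugacy classes forces $k\in\{3,9\}$ (for $k=3$ this is the real class $\C_3^0$, the classes $\C_3^*$ and $\C_6^\pm$ being non-real) or $k$ to divide one of $q-1$, $\tfrac{q+1}{2}$, $q-\sqrt{3q}+1$, $q+\sqrt{3q}+1$. For the semisimple values of $k$ I would use the centraliser orders from Ward's character table to identify $C_G(\rho)$ as a maximal torus of $R(q)$ — cyclic of order $q-1$, or $2^2\times C_{(q+1)/4}$ of order $q+1$ (whose Sylow $2$-subgroup is a four-group $V^*$), or the odd-order cyclic Hall subgroup $A_2$ or $A_3$ — together with the fact that for each such torus $T$ the quotient $N_G(T)/C_G(T)$ is cyclic with its involutions acting on $T$ either trivially or by inversion. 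Since $\J_1$ normalises $T=C_G(\rho)$ and acts nontrivially (inverting $\rho\ne1$), it inverts the odd-order cyclic part of $C_G(\rho)$. Hence: if $k\mid q\pm\sqrt{3q}+1$ then $H=C_{A_i}(\J_1)=1\in\eye$; if $k\mid q-1$ then $H$ is the unique involution of the torus of order $q-1$ and $H\in\cent_2$; and if $k\mid\tfrac{q+1}{2}$ then, using also that every involution of $N_G(A_1)=(2^2\times D_{(q+1)/2})\colon3$ centralises or inverts $A_1$, the odd part of $C_G(\rho)$ is inverted while $V^*$ is preserved, so $H=C_{V^*}(\J_1)$ is $V^*$ itself or a subgroup of order $2$, whence $H\in\vier\cup\cent_2$.

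It remains to treat $k\in\{3,9\}$. If $\rho\in\C_3^0$, then $C_G(\rho)=U$ is a Sylow $3$-subgroup of shape $3^{1+1+1}$ with characteristic chain $1<Z<W<U$, where $Z=Z(U)$ has order $q$ with $Z\setminus1\subseteq\C_3^0$, the subgroup $W$ is normal elementary abelian of order $q^2$ with $W\setminus Z\subseteq\C_3^*$, and every element of $U$ of order $3$ lies in $W$. Since the order-$3$ elements of $C_1=2\times L_2(q)$ all lie in $\C_3^*$, the elementary abelian group $H$ meets $Z$ trivially and is contained in $W$; and since $\rho$ is central in $U$ one checks $C_W(\J_2)=C_W(\J_1)$, so $H=C_W(\J_1)$. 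From $H\cap Z=1$ the inverting involution acts as $-1$ on $Z$, and then the action of the torus on the layers of $U$ gives $|H|\in\{1,q\}$; in the latter case $H$ is a Sylow $3$-subgroup of $C_1$, i.e. $H=\eff(n)$. Thus $H\in\eff\cup\eye$. If $\rho$ has order $9$, then $\rho^3\in\C_3^0$ and $C_G(\rho)$ is abelian of order $3q$ inside the Sylow $3$-subgroup $U'=C_G(\rho^3)$, with its elements of order dividing $3$ forming exactly $Z(U')$; since $Z(U')\setminus1\subseteq\C_3^0$ is disjoint from the element orders of $C_1$ while $H$ has exponent dividing $3$, we get $H=1\in\eye$. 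Collecting the cases gives $H\in\cent_\vier\cup\eff\cup\vier\cup\cent_2\cup\eye$.

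The commuting case and the order-$9$ case are immediate; the crux — and the main obstacle — is the semisimple bookkeeping of the second paragraph, which requires both the exact centraliser orders and sufficient knowledge of the torus normalisers of $R(q)$ to guarantee that the inverting involution acts by inversion, and the module-theoretic point in the $\C_3^0$ case that $-1$ acts as a reflection on the bottom layer $Z(U)$ of $3^{1+1+1}$.
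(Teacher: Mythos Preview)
Your proof is correct and takes a genuinely different route from the paper's. The paper argues geometrically via the $2$-transitive action on $\Omega$ and the blocks $\Omega^{\J}$: after the commuting case it splits on whether $\Omega^{\J}\cap\Omega^{\J'}$ is a single point $\{\omega\}$ (forcing $H$ to be a $3$-group inside the point stabiliser, hence in $\eff\cup\eye$) or empty (then elements of order $3$ or of order dividing $(q-1)/2$ are excluded because they would fix a common point of the two blocks, and elements of order dividing $(q+1)/4$ are excluded because their full $G$-centraliser already lies in $C_G(\J)$, leaving $H$ a subgroup of a four-group). Your approach instead writes $H=C_{C_G(\rho)}(\J_1)$ for the rotation $\rho=\J_1\J_2$ and reads off the answer from the centraliser structure of each real conjugacy class, case by case over the possible orders of $\rho$. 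The paper's argument is shorter and stays within its permutation-theoretic framework; yours is more internally group-theoretic and would transfer to settings without a convenient $2$-transitive action, at the price of invoking more of Ward's centraliser data and the normaliser structure of each maximal torus.

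One small sharpening: in your $k=3$ case the alternative $|H|=1$ does not actually occur. Since $\J_1$ normalises $U=C_G(\rho)$ it lies in the parabolic $P_\omega=N_G(U)$, so $C_U(\J_1)$ is exactly the Sylow $3$-subgroup of $C_1\cap P_\omega\cong 2\times(q{:}\tfrac{q-1}{2})$ and therefore has order $q$; the trivial intersection arises only in your $k=9$ case.
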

\begin{proof} Let $\J \neq \J'$ be involutions in $G$ and $C=C_G(\J)$. If $\J' \in C$ then $H = C \cap C_G(\J') = C_G(\langle \J,\J' \rangle) \in \cent_V$, so suppose that $\J' \notin C$. If $\Omega^\J \cap \Omega^{\J'} = \{\omega\}$ and $H$ is nontrivial, then it will equal the 3-group in $\eff$ belonging to both involution centralisers. If $\Omega^\J \cap \Omega^{\J'} = \varnothing$ then their intersection will not contain elements of order 3 or dividing $(q-1)/2$ since they fix points in $\Omega^\J \cap \Omega^{\J'}$. If $s \in C$ is an element of order $k>2$ dividing $(q+1)/4$, then its centraliser in $G$ is equal to its centraliser in $C_G(\J)$ and is isomorphic to $\langle s \rangle \times 2^2$, then, since $\J' \notin C$, elements of order $k$ cannot belong to $C \cap C_G(\J')$ and the intersection is then a subgroup of a four-group.\end{proof}

\begin{lemma} The intersection of an involution centraliser with a four-group normaliser belongs to $\cent_\vier \cup \cent_\J(1) \cup \ee \cup \cent_3^* \cup \eye$. \end{lemma}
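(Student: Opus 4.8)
The plan is to split on how the involution sits relative to the four-group. Let $\J$ be the involution, $C=C_G(\J)$ its centraliser, let $V=\langle\J_1,\J_2\rangle$ be the four-group, $N=N_G(V)$ its normaliser, and set $H=C\cap N$.

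First I would treat the case $\J\in V$. Here $C_G(V)\le H$ is immediate, and conversely $N$ induces on $V$ only the group $N/C_G(V)$ of order $3$, so any $g\in H$, which fixes $\J$ under conjugation, must fix all three involutions of $V$ and hence centralise $V$; thus $H=C_G(V)\cong 2^2\times D_{(q+1)/2}\in\cent_{\vier}$. If instead $\J\in N\setminus V$, then $H=C_N(\J)$, and by the determination in Section \ref{struct} of the three classes of involutions of a four-group normaliser, $\J$ is conjugate in $N$ either to $\tau$, whence $H=C_N(\tau)\cong 2\times L_2(3)\in\cent_{\J}(1)$, or to $\tau'$, whence $H=C_N(\tau')\cong 2^3\in\ee$.

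The remaining, and substantial, case is $\J\notin N$, which I would handle by first showing $H$ is a $3$-group. The only primes dividing $|N|=6(q+1)$ are $2$, $3$, and the primes exceeding $3$ that divide $(q+1)/4$, so it suffices to exclude from $H$ all elements of order $2$ and all elements of prime order $p>3$. For the latter: the only such elements of $N$ lie in the cyclic Hall $A_1$-subgroup $\langle s\rangle$ with $C_G(\langle s\rangle)=V\times\langle s\rangle$, so an element $h\in H$ of order $p>3$ lies in $\langle s\rangle$ and is centralised by $\J$; since the involutions of $V\times\langle s\rangle$ are exactly the involutions of $V$, this forces $\J\in V$, a contradiction. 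For the former: an involution $j\in H$ centralises $\J$ and, being an involution of $N$, lies in $C_G(V)$, so $j$ commutes with each $\J_i$ and its block $\Omega^{j}$ is disjoint from every $\Omega^{\J_i}$. Here I would exploit the geometry: $H$ stabilises the block $\Omega^\J$, on which $C$ acts $2$-transitively, and also the union $B=\Omega^{\J_1}\cup\Omega^{\J_2}\cup\Omega^{\J_3}$ of the three disjoint blocks of $V$, which $N$ permutes transitively, and $\Omega^\J$ meets $B$ in at most three points since $\J\notin V$; if $H$ fixes a point of $\Omega^\J$ then the lemma above on a parabolic intersected with an involution centraliser, together with $H\le N$, bounds $|H|$ by $\gcd\!\bigl(q(q-1),6(q+1)\bigr)=6$, and combining these observations with the classification of the four-subgroups of $N$ from Section \ref{struct} and the fact that $N$ acts on $V$ through a group of order $3$ shows no such $j$ can occur. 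With both kinds of element excluded, $H$ is a $3$-group, hence $|H|\le 3$; if $|H|=3$ then $H$ is generated by an order-$3$ element of $N$, which lies in $\C_3^*$, so $H\in\cent_3^*$, and otherwise $H=\eye$.

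The hard step is this last case $\J\notin N$: each of the exclusions above leans on the rather detailed structural and geometric information about $N=N_G(V)$ assembled in Section \ref{struct} — its three involution classes, its four-subgroups, its action on the blocks of $V$, and the centralisers of its semisimple elements — and some care is needed to verify that none of the small intersections that arise falls outside the classes $\cent_3^*$ and $\eye$.
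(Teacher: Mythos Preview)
Your three-case split on whether $\J\in V$, $\J\in N\setminus V$, or $\J\notin N$ is exactly the paper's decomposition, and your treatment of the first two cases matches the paper's: for $\J\in V$ you get $C_G(V)\in\cent_{\vier}$, and for $\J\in N\setminus V$ you quote the centralisers $C_N(\tau)\cong 2\times L_2(3)$ and $C_N(\tau')\cong 2^3$ established in Section~\ref{struct}. Your exclusion of elements of prime order $p>3$ in the third case is also correct and amounts to the paper's observation that such elements lie in a unique four-group normaliser, so $H$ embeds in $N/\langle s\rangle\cong 2\times L_2(3)$.

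The genuine gap is your exclusion of involutions when $\J\notin N$. You correctly note that any involution $j\in H$ lies in $C_G(V)$, record some block-disjointness and an order bound, and then write ``combining these observations \dots shows no such $j$ can occur'' --- but nothing you have written actually forces a contradiction: an involution $j$ commuting with $\J$ and with each $\J_i$ is not obviously impossible from block considerations alone, and your $\gcd$ bound of $6$ still leaves room for an involution. The paper does not attempt to exclude all involutions geometrically; instead, having embedded $H$ into $2\times L_2(3)$, it argues structurally that $H$ cannot contain the normal four-group $V$ (else $\J\in C_G(V)\le N$) nor the central involution of that quotient (the image of any $\tau\in D$), and then reads off from the subgroup lattice of $2\times L_2(3)$ that this forces $H\in\cent_3^*\cup\eye$. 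To close your argument you need either to carry out that lattice analysis or to give an explicit reason why no involution of $N$ can commute with an involution outside $N$; the observations you list do not yet do this.
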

\begin{proof} Let $V$ be a four-group in $G$ and $N=N_G(V)$ its normaliser in $G$. If $\J \in V$ then $N \cap C_G(\J)$ is the centraliser in $N$ of $\J$ which belongs to $\cent_V$. Elements in $G$ of order dividing $(q+1)/4$ belong to a unique four-group normaliser and will only belong to the intersection of $N$ with a centraliser of an involution in $N$, so all other intersections are isomorphic to a subgroup of $2 \times L_2(3)$. If $\J \in N \setminus V$ then the intersection belongs to an element of $\cent_\J(1)$ or $\ee$ depending on whether $\J$ is contained in a dihedral subgroup of $N$ that is normal in $N$ or not, as per the discussion in Section \ref{struct}. Finally, if $\J \notin N$ then the intersection is isomorphic to a subgroup of $2 \times L_2(3)$ which does not contain its central involution or its normal four-group, hence is an element of $\cent_3^* \cup \eye$. \end{proof}

\begin{lemma} The intersection of an involution centraliser with an element of $\norm_2 \cup \norm_3$ belongs to $\cent_6^* \cup \cent_3^* \cup \cent_2 \cup \eye$. \end{lemma}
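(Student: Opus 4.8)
The plan is to show this intersection is forced to be small and cyclic by a coprimality argument, and then to read off its conjugacy class from the element data of Section~\ref{struct}. Write $C = C_G(\J) \cong 2 \times L_2(q)$ for the involution centraliser, let $A$ be a Hall subgroup conjugate to $A_2$ or $A_3$ with $N = N_G(A) \cong (q \mp \sqrt{3q}+1)\colon 6$, and put $H = C \cap N$. First I would record the relevant arithmetic. Since $\vert A \vert = a_2(n)$ or $a_3(n)$ is one of the three pairwise coprime factors of $q^3+1$ (pairwise coprimality being noted in the proof of Lemma~\ref{numb}), it is coprime to $a_1(n) = q+1$; it is odd, hence coprime to $q-1$ because $q^3+1 \equiv 2 \pmod{q-1}$; and it is $\equiv 1 \pmod 3$, hence coprime to $q = 3^n$ and to $6$. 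Consequently $\vert A \vert$ is coprime to $\vert C \vert = q(q^2-1)$ and to $6$.

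Next I would conclude that $\vert H \vert$ divides $\gcd(\vert C\vert, \vert N\vert) = \gcd(q(q^2-1),\, 6\vert A\vert) = \gcd(q(q^2-1),6) = 6$, the last equality because $3 \mid q$ and $2 \mid q-1$. In particular every non-trivial element of $H$ has order coprime to $\vert A \vert$, and since $A \nsg N$ this forces $H \cap A = 1$; hence $H \cong HA/A$ embeds into $N/A \cong C_6$ and is cyclic of order $1$, $2$, $3$ or $6$. I want to stress that the only genuinely non-trivial point is establishing that $H$ is \emph{cyclic} (rather than merely of order dividing $6$): this is exactly what passing to the quotient $N/A$ — instead of just bounding $\vert H\vert$ — buys us.

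It then remains to place $H$ in one of our classes, using the element data of Section~\ref{struct}. If $H = 1$ then $H \in \eye$. If $\vert H \vert = 2$ then $H$ is generated by an involution and, all involutions of $G$ being conjugate, $H \in \cent_2$. If $3 \mid \vert H \vert$ then $H$ contains an order-$3$ element of $C$, which lies in $\C_3^*$ since every order-$3$ element of an involution centraliser does; so $H \in \cent_3^*$ when $\vert H\vert = 3$. Finally, if $\vert H \vert = 6$ its generator has order $6$, hence lies in $\C_6^+ \cup \C_6^-$, so $H \in \cent_6^*$. In all cases $H \in \cent_6^* \cup \cent_3^* \cup \cent_2 \cup \eye$. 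I do not expect any serious obstacle here: once the coprimality of $\vert A\vert$ with $\vert C\vert$ and with $6$ is in place, the rest is bookkeeping against the list of element orders and conjugacy classes.
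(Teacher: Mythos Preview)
Your proof is correct and is precisely the ``comparison of orders'' the paper invokes in its one-line proof, carried out in full: you compute $\gcd(\vert C\vert,\vert N\vert)=6$, then use $H\hookrightarrow N/A\cong C_6$ to secure cyclicity, and finally match the resulting cyclic group against the conjugacy-class data of Section~\ref{struct}. The embedding into $N/A$ and the check that order-$3$ elements of $C$ lie in $\C_3^*$ are exactly the details the paper suppresses as ``clear''.
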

\begin{proof} This is clear from the comparison of orders. \end{proof}

\subsection{Intersections with normalisers of four-groups or Hall subgroups} \label{nint}

\begin{lemma} The intersection of two distinct four-group normalisers belongs to $\ee \cup \cent_6^* \cup \cent_3^* \cup \cent_2 \cup \eye$. \end{lemma}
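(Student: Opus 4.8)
The plan is as follows. Write $N_i=N_G(V_i)$ for the two distinct four-group normalisers, $V_1\neq V_2$, and set $H=N_1\cap N_2$. Since $H\sg N_1\cong(2^2\times D_{(q+1)/2})\colon 3$, the order $|H|$ divides $6(q+1)=2^3\cdot 3\cdot\tfrac{q+1}{4}$, every $2$-subgroup of $H$ is elementary abelian of order at most $8$ (these being the Sylow $2$-subgroups of $N_1$), every $3$-subgroup has order at most $3$, and the Hall $\tfrac{q+1}{4}$-part of $H$ is cyclic. First I would remove the cyclic part: if $g\in H$ has order $k$ with $2<k\mid\tfrac{q+1}{4}$, then $\gcd(k,6)=1$ makes $g$ centralise each $V_i$ (its action on $V_i$ factors through $\mathrm{Aut}(V_i)\cong S_3$), so $g$ lies in the Hall $A_1$-subgroup $\langle s_i\rangle$ normalised by $N_i$; both $\langle s_1\rangle$ and $\langle s_2\rangle$ then lie in $C_G(g)=V_1\times\langle s_1\rangle$, and being its Hall $\tfrac{q+1}{4}$-subgroup they coincide, so $N_1=N_G(\langle s_1\rangle)=N_G(\langle s_2\rangle)=N_2$ — a contradiction. (Equivalently, by Section~\ref{struct} each Hall subgroup conjugate to $A_1$ lies in a unique four-group normaliser.) Hence $|H|\mid 24$.

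So $|H|\in\{1,2,3,4,6,8,12,24\}$, and the small cases give exactly the listed classes: $|H|=1$ is $\eye$; $|H|=2$ is $\cent_2$ (all involutions are conjugate); $|H|=3$ is $\cent_3^*$ (an order-$3$ element of a four-group normaliser lies in $\C_3^*$); $|H|=6$ is $\cent_6^*$ (since no dihedral subgroup of $G$ has order divisible by $3$, $G$ has no $S_3$, so $H$ is cyclic); and $|H|=8$ is $\ee$ ($H$ is then a Sylow $2$-subgroup of $N_1$, which is elementary abelian). It remains to rule out $|H|\in\{4,12,24\}$. For $|H|\in\{12,24\}$ a routine check (using that $G$ has no $S_3$ and no element of order $4$) shows that $H$ — a $\{2,3\}$-group of that order with elementary abelian Sylow $2$-subgroup and no subgroup $S_3$ — contains a four-group $W$ normal in $H$; then $H\sg N_{N_1}(W)$, and if $W\neq V_1$ the description in Section~\ref{struct} of the non-normal four-groups of a four-group normaliser gives $N_{N_1}(W)=C_{N_1}(W)\cong 2^3$, forcing $|H|\sg 8$ — impossible; so $W=V_1$, and running the same argument inside $N_2$ gives $W=V_2$, whence $V_1=V_2$, a contradiction. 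For $|H|=4$, $H$ is a four-group $V'$; if $V'=V_1$ then $V_1=V'=N_1\cap N_2\sg N_2$ and, since $V_1\neq V_2=O_2(N_2)$, $V_1$ is a non-normal four-group of $N_2$, so $N_1\cap N_2=N_G(V_1)\cap N_2=N_{N_2}(V_1)\cong 2^3$ has order $8\neq 4$ — a contradiction; likewise $V'\neq V_2$.

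The crux — and the step I expect to be the main obstacle — is thus to exclude $H=V'$ a four-group with $V'\neq V_1$ and $V'\neq V_2$ (so that, by the preceding case, $V_1\not\sg N_2$ and $V_2\not\sg N_1$). As a non-normal four-group of $N_i$, $V'$ is $N_i$-conjugate into $C_{N_i}(V_i)=V_i\times D_i$, hence lies in it and centralises $V_i$; so $V_1,V_2\sg C_G(V')=V'\times\Delta$ with $\Delta\cong D_{(q+1)/2}$, and since each $V_i$ meets $V'$ in exactly one involution, $V_i$ projects onto a subgroup $\langle\bar a_i\rangle$ of order $2$ (a reflection) in $\Delta$. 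If $\langle\bar a_1\rangle=\langle\bar a_2\rangle$ then $V'\times\langle\bar a_1\rangle$ is elementary abelian of order $8$ — a Sylow $2$-subgroup of $G$ — containing $V_1$ and $V_2$ as (normal, being abelian) subgroups, so $V'\times\langle\bar a_1\rangle\sg N_1\cap N_2=V'$, contradicting $|H|=4$. The genuinely hard case is $\langle\bar a_1\rangle\neq\langle\bar a_2\rangle$: then $\langle\bar a_1,\bar a_2\rangle$ is a dihedral $D_{2d}\sg\Delta$ with $d\mid\tfrac{q+1}{4}$ and $d>1$, whose rotation subgroup lies in the Hall $A_1$-subgroup $\langle s\rangle\sg\Delta$ satisfying $N_G(\langle s\rangle)=N_G(V')$; I would then aim to show that this configuration forces a further element — say a generator of $\langle\bar a_1\bar a_2\rangle$, pinned down through products of the involutions of $V_1$ with those of $V_2$ — or another four-group into $N_1\cap N_2$, again contradicting $|H|=4$. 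Most plausibly this is carried out either by exploiting the mutually disjoint triples of blocks fixed on $\Omega$ by $V_1$, $V_2$ and $V'$, or by descending into $C_G(\J)=\langle\J\rangle\times L_2(q)$ for an involution $\J$ common to $V_1$ and $V'$ and invoking the Klein-four geometry of $L_2(q)$, once more using the absence of elements of order $4$ in $G$ to force commuting. Controlling the mutual position of the three four-groups $V_1$, $V_2$, $V'$ inside the relevant involution centralisers is where the effort would mainly go.
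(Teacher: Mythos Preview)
The paper's proof is two lines: it recalls that $N_i=N_G(V_i)$ coincides with $N_G(\langle s_i\rangle)$ for the characteristic Hall $A_1$-subgroup $\langle s_i\rangle\nsg N_i$, notes that $C_G(s_i)$ lies in a \emph{unique} four-group normaliser, and concludes that $N_1\cap N_2$ is isomorphic to a subgroup of $N_G(s_1)/C_G(s_1)\cong C_6$. Your opening paragraph --- eliminating elements of order $k>2$ dividing $(q+1)/4$ via precisely this uniqueness --- is already the paper's entire mechanism; the subsequent case analysis on $|H|\mid 24$ is additional scaffolding on your part rather than a genuinely different idea.

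That scaffolding is left unfinished. In the $|H|=4$ subcase with $V'\neq V_1,V_2$ and distinct reflections $\langle\bar a_1\rangle\neq\langle\bar a_2\rangle$ in $\Delta$, you offer only a plan (``I would then aim to show\ldots''), and it is not clear that the block-geometric or $L_2(q)$-centraliser manoeuvres you sketch will actually manufacture a further element of $N_1\cap N_2$; controlling three mutually centralising four-groups simultaneously is exactly as delicate as you anticipate, and nothing you have written forces the rotation $\bar a_1\bar a_2$ (or any product of involutions from $V_1$ and $V_2$) to normalise both $V_1$ and $V_2$. It is worth remarking that the paper's quotient argument, read literally, would also exclude $|H|=8$ --- yet $\ee$ appears in the statement and a Sylow $2$-subgroup does lie in seven distinct maximal four-group normalisers --- so the paper's own proof is terse to the point of omission here, and your instinct to argue more carefully is sound. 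If you want to complete your route, the cleanest line is to refine the quotient rather than chase four-groups: since every nontrivial power of $s_1$ lies only in $N_1$, one has $C_G(s_1)\cap N_2\sg V_1$, whence $H/(H\cap V_1)\hookrightarrow C_6$, reducing the whole problem to the single datum $H\cap V_1$ and collapsing your $|H|\in\{4,12,24\}$ cases into an analysis of how one fixed four-group $V_1$ can meet $N_2$.
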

\begin{proof} Recall that the normaliser of a four-group is the normaliser of its normal subgroup of order $a_1=(q+1)/4$ conjugate to a cyclic Hall subgroup $A_1$. Let $s_1, s_2 \in G$, be elements of order $a_1$ belonging to distinct Hall subgroups. Since the centralisers in $G$ of $s_1$, $s_2$, belong to unique four-group normalisers, the intersection of $N_G(s_1) \cap N_G(s_2)$ is isomorphic to a subgroup of $N_G(s_1)/C_G(s_1) \in \cent_6^*$.
\end{proof}

\begin{lemma} The intersection of an element of $\norm_V \cup \norm_2 \cup \norm_3$ with a distinct element of $\norm_2 \cup \norm_3$ belongs to $\cent_6^* \cup \cent_3^* \cup \cent_2 \cup \eye$. \end{lemma}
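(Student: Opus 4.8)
The plan is to reduce to the same mechanism already used for the other Hall-normaliser intersections in this section: show that the intersection meets the normal cyclic Hall subgroup of the second factor trivially, so that it embeds in the quotient $C_6$, and then read off the admissible cyclic subgroups. Write $N_2 = N_G(B)$, where $B$ is a cyclic Hall subgroup conjugate to $A_2$ or $A_3$, say of order $a_i$ with $i \in \{2,3\}$, let $N_1 \in \norm_V \cup \norm_2 \cup \norm_3$ with $N_1 \neq N_2$, and put $H = N_1 \cap N_2$. The arithmetic facts needed, all immediate from the orders recorded in Section~\ref{struct}, are that $a_2$ and $a_3$ are odd, coprime to $3$ (as $q = 3^n$), coprime to each other, and coprime to $q+1$ (since $a_2a_3 = (q+1)^2 - 3q = q^2-q+1 \equiv 3 \pmod{q+1}$, while $q+1 \equiv 1 \pmod 3$); consequently every divisor of $a_i$ is coprime to $6$, to $a_j$ for $j \neq i$, and to $\vert N_1\vert$ whenever $N_1$ is a four-group normaliser (of order $6(q+1)$) or the normaliser of a Hall subgroup of the other type (of order $6a_j$). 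The structural fact needed is that $B = C_G(B)$, and more generally $C_G(g) = B$ for every nontrivial $g \in B$; in particular two distinct conjugates of $A_i$ intersect trivially, since a common nontrivial element $h$ would force both of them to equal $C_G(h)$.

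The first step is to show $H \cap B = 1$. Suppose $1 \neq g \in H \cap B$ and let $d = \vert g\vert > 1$, so $d \mid a_i$. If $N_1$ is a four-group normaliser, or is $N_G(A)$ with $A$ conjugate to $A_j$ for some $j \neq i$, then $d \mid \vert N_1\vert$ forces $d = 1$ by the coprimalities noted above, a contradiction. If instead $N_1 = N_G(A)$ with $A$ conjugate to $A_i$, then $A \neq B$ (otherwise $N_1 = N_2$), so $A \cap B = 1$; hence $g \notin A$, and $g$ projects to a nontrivial element of $N_1/A \cong C_6$ whose order divides $\gcd(d,6) = 1$, again a contradiction. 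So $H \cap B = 1$ in every case.

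It follows that the restriction to $H$ of the projection $N_2 \to N_2/B \cong C_6$ is injective, so $H$ is cyclic of order dividing $6$. It then remains to identify such subgroups inside $G$: every element of order $6$ in $G$ is of the form $\J\T$, so $\langle g \rangle \in \cent_6^*$ when $\vert g\vert = 6$; the Sylow $3$-subgroups of $N_2$ have order $3$ and are all $N_2$-conjugate, and one of them is generated by the element $\T \in \C_3^*$ normalising $B$, so every order-$3$ element of $N_2$ lies in $\C_3^*$ and $\langle g \rangle \in \cent_3^*$ when $\vert g\vert = 3$; and clearly $\langle g \rangle \in \cent_2$ when $\vert g\vert = 2$. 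Hence $H \in \cent_6^* \cup \cent_3^* \cup \cent_2 \cup \eye$, as claimed.

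I expect the only point requiring care to be the sub-case in which $N_1$ and $N_2$ are normalisers of Hall subgroups of the \emph{same} type, where a bare comparison of orders does not suffice: one must instead invoke that distinct conjugates of $A_i$ intersect trivially (a consequence of their being self-centralising) together with the action of $N_G(A_i)/A_i \cong C_6$. All the remaining cases are either the coprimality bookkeeping reused throughout this section or a direct appeal to the description of $N_G(A_i)$ in Section~\ref{struct}.
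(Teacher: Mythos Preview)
Your proof is correct and follows essentially the same approach as the paper: the paper's one-line proof invokes ``comparison of the orders of the various groups and the observation that cyclic Hall subgroups have trivial intersection and belong to a unique Hall subgroup normaliser in $G$'', which is exactly what you unpack in detail --- coprimality for the mixed cases, and the self-centralising property of $B$ (hence trivial intersection of distinct conjugates) combined with the quotient $N_G(B)/B \cong C_6$ for the same-type case. Your additional care in verifying that the order-$3$ elements of $N_2$ lie in $\C_3^*$ is a nice touch that the paper leaves implicit.
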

\begin{proof} This is clear from comparison of the orders of the various groups and the observation that cyclic Hall subgroups have trivial intersection and belong to a unique Hall subgroup normaliser in $G$. \end{proof}

\subsection{Conjugacy and normalisers of subgroups in $R(q)$} \label{cnr}

An important step in determing the \M function is determing the size and number of conjugacy classes of subgroups in each class which we now prove in the following Lemmas.

\begin{lemma} Elements of $\R(l)$ are self-normalising and isomorphic elements are conjugate in $G$. \end{lemma}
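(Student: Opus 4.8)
The plan is to argue geometrically using the 2-transitive action on $\Omega$ together with the description of subfield subgroups given in Section \ref{struct}. Recall that an element $G_m \in \R(l)$ (with $l=m$, a divisor of $n$) acts 2-transitively on the subset $\Omega(m) \subset \Omega$ of size $3^{3m}+1$, consisting precisely of the points fixed by the Sylow 3-subgroups of $G_m$. The key observation I would isolate first is that the set $\Omega(m)$ is intrinsic to $G_m$: it is the union of the unique fixed points of the Sylow 3-subgroups of $G_m$, and equivalently it is characterised as the set of points $\omega$ such that the stabiliser $(G_m)_\omega$ contains a Sylow 3-subgroup of $G_m$ (i.e. is a parabolic of $G_m$ rather than merely meeting $G_m$ in involutions). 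Consequently, any element $g \in G$ normalising $G_m$ must permute the Sylow 3-subgroups of $G_m$ among themselves, hence must stabilise $\Omega(m)$ setwise.

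For \textbf{self-normalisation}, I would then let $g \in N_G(G_m)$. Since $g$ stabilises $\Omega(m)$ setwise and $G_m$ acts 2-transitively on $\Omega(m)$, after multiplying $g$ by a suitable element of $G_m$ we may assume $g$ fixes two chosen points $\omega_1, \omega_2 \in \Omega(m)$, hence $g$ normalises and fixes the fixed-point structure of the Hall subgroup $\langle r \rangle$ of order $(3^m-1)/2$ lying in $(G_m)_{\omega_1} \cap (G_m)_{\omega_2}$; in fact one can push further and assume $g$ centralises a parabolic-type generating configuration of $G_m$. The cleanest route is: $G$ acts faithfully on $\Omega$, and an element centralising $G_m$ (which $g$ does once reduced to fixing enough points, using that $G_m$ contains elements fixing arbitrarily small fixed-point sets inside $\Omega(m)$ and $\varnothing$ outside) must act trivially on the $G_m$-orbits, in particular trivially on $\Omega(m)$ which has size $>2$, and more importantly trivially on the blocks $\Omega^t$ of involutions $t \in G_m$ that cover the points of $\Omega(m)^c$; since every point of $\Omega$ lies in $\Omega(m)$ or in such a block, $g$ fixes all of $\Omega$, so $g=1$. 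Thus $N_G(G_m) = G_m \cdot C_G(G_m) = G_m$.

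For \textbf{conjugacy of isomorphic elements}, the maximal subgroups list (and Kleidman's classification \cite{kleid}) gives that for each prime $p \mid n$ there is a single conjugacy class of maximal subfield subgroups $R(3^{n/p})$ in $G$; the general $G_m$ with $m \mid n$ is obtained by iterating, and I would argue by induction on $n/m$ that all copies of $R(3^m)$ sitting inside $G$ that arise this way form a single $G$-class, noting that any subgroup isomorphic to $R(3^m)$ has its intrinsic invariant set $\Omega(m)$ of the correct size and the induction reduces to the maximal (prime-index) case. The main obstacle — and the step I would be most careful about — is ruling out \emph{exotic} embeddings of $R(3^m)$ in $G$ that do not arise as subfield subgroups, i.e. genuinely establishing that ``isomorphic to an element of $\R(l)$'' forces ``conjugate to a subfield subgroup''; here I would lean on the subgroup structure theorem (Theorem \ref{int}) together with order and 2-transitivity constraints: a copy of $R(3^m)$ is too large to lie in any of the other intersection classes $\mi$ except inside a subfield subgroup, and a counting/maximality argument then forces it to be a full subfield subgroup of some $G_{m'}$ with $m \mid m' \mid n$, whence induction finishes. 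The self-normalising half also feeds back in, since it pins down $[G : N_G(G_m)] = |G|/|G_m|$ as claimed in Theorem \ref{mob}.
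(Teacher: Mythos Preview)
Your geometric approach to self-normalisation has two genuine gaps. First, the conclusion $N_G(G_m) = G_m \cdot C_G(G_m)$ is unjustified: this would require every automorphism of $G_m$ induced by conjugation in $G$ to be inner, but $\mathrm{Out}(R(3^m))$ is cyclic of order $m$, so a priori an element of $G$ could normalise $G_m$ while inducing a nontrivial field automorphism on it. Your reduction --- multiplying by an element of $G_m$ so that $g$ fixes two points of $\Omega(m)$ --- only places $g$ in the cyclic two-point stabiliser of order $3^n-1$; it does not force $g$ to centralise $G_m$, and the phrase ``one can push further and assume $g$ centralises a parabolic-type generating configuration'' is precisely the missing step. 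Second, the covering claim is false: the blocks $\Omega^t$ for involutions $t\in G_m$ do \emph{not} cover $\Omega\setminus\Omega(m)$. For $m=1$, $n=3$ there are $63$ involutions in $R(3)$, each contributing $3^3+1-(3+1)=24$ points outside $\Omega(1)$, so at most $1512$ points are covered, whereas $|\Omega\setminus\Omega(1)|=3^9-3^3=19656$. (The conclusion $C_G(G_m)=1$ is true, but for the structural reason that no element centraliser in $G$ can contain a copy of $R(3^m)$.)

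The paper's argument avoids all of this: any proper overgroup of $G_m$ in $G$ lies in a maximal subgroup, and by the maximal-subgroup list the only ones that can contain $R(3^m)$ are larger subfield subgroups; these are simple, so $G_m$ is not normal in any of them, whence $N_G(G_m)=G_m$. For conjugacy the paper simply cites Levchuk--Nuzhin \cite{ln}. Your inductive idea for conjugacy is reasonable in spirit, but invoking Theorem~\ref{int} does not do the work you want: that theorem classifies \emph{intersections of maximal subgroups}, not arbitrary subgroups abstractly isomorphic to $R(3^m)$, so it does not by itself exclude exotic embeddings.
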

\begin{proof} Subfield subgroups are contained only in larger subfield subgroups which are simple, hence the former statement holds, the latter is proved in \cite{ln}. \end{proof}

\begin{lemma} Elements of $\para(l)$ are self-normalising and isomorphic elements are conjugate in $G$. \end{lemma}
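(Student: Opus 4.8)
The plan is to exploit the fact that a group $P$ in $\para(l)$ sits inside a unique maximal subfield subgroup $G_l$ (or $P$ is itself maximal, when $l=n$), and that inside $G_l$ it is the full point stabiliser of some $\omega \in \Omega(l)$, hence a genuine parabolic of the small Ree group $R(3^l)$. First I would recall from Section \ref{struct} that a parabolic of $R(3^l)$ is the normaliser in $R(3^l)$ of its own Sylow $3$-subgroup $S$, that $S$ has shape $(3^l)^{1+1+1}$, and that distinct Sylow $3$-subgroups of $G$ intersect trivially and each fixes a unique point of $\Omega$. The key structural observation is therefore that $P$ contains a unique subgroup isomorphic to $(3^l)^{1+1+1}$ that is normal in $P$, namely $O_3(P) = S$; anything normalising $P$ must normalise $S$, so $N_G(P) \sg N_G(S)$.

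For self-normalisation, I would argue that $N_G(S)$ is already controlled: $S$ is a Sylow $3$-subgroup of the subfield subgroup $G_l$ and $G_l$ is the unique maximal subfield subgroup containing it (a larger subfield subgroup containing $S$ would have a strictly larger Sylow $3$-subgroup). Since $S$ fixes a unique point $\omega \in \Omega$, any element $g$ normalising $S$ fixes $\omega$ as well, so $g$ lies in the point stabiliser $P_\omega$ of $G$, which is a parabolic of $G$ of shape $q^{1+1+1}\colon(q-1)$. Thus $N_G(S) \sg P_\omega$, and within $P_\omega$ one computes directly that $N_{P_\omega}(S)$ is exactly $N_{G_l}(S) = P$, using that $P_\omega$ is solvable with a transparent normal series (its unipotent radical is a $3$-group acting on which the $S$ in $G_l$-coordinates is a subfield subgroup, and the complement $q-1$ contributes only the $3^l-1$ part that already lies in $P$). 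Hence $N_G(P) \sg N_G(S) \sg P$, giving $N_G(P) = P$.

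For conjugacy of isomorphic elements, suppose $P, P'$ both lie in $\para(l)$. Each is a parabolic of a maximal subfield subgroup, say $P \sg G_l$ and $P' \sg G_l'$; by the previous Lemma all maximal subfield subgroups of a fixed order are conjugate in $G$, so after replacing $P'$ by a conjugate we may assume $G_l = G_l'$. Inside $G_l \cong R(3^l)$ the parabolic subgroups are all conjugate — they are the point stabilisers in the $2$-transitive action on $\Omega(l)$, equivalently the normalisers of Sylow $3$-subgroups, which form a single conjugacy class by Sylow's theorem. Therefore $P$ and $P'$ are conjugate already in $G_l$, a fortiori in $G$.

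The main obstacle is the self-normalising claim: one must be sure that no element of $G$ outside $P$ normalises $S$, and the cleanest route is the ``unique fixed point'' argument above, reducing to a computation inside the parabolic $P_\omega$ of $G$ rather than anywhere in $G$. One subtlety to check is the boundary case $l = n$, where $P = P_\omega$ is itself maximal; there self-normalisation is immediate since $P$ is a maximal subgroup that is not normal in the simple group $G$, and conjugacy is the statement that $G$ acts transitively on $\Omega$.
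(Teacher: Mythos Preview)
Your conjugacy argument is correct and matches the paper's: reduce to a common subfield subgroup via the conjugacy of isomorphic subfield subgroups, then use that parabolics inside a fixed $R(3^l)$ form a single conjugacy class.

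For self-normalisation there is a genuine gap: the assertion $N_{P_\omega}(S) = P$ is false whenever $l < n$. The centre $Z(Q)$ of the full Sylow $3$-subgroup $Q \triangleleft P_\omega$ has order $3^n$ and centralises every subgroup of $Q$, in particular $S$; since $Z(Q) \cap P = Z(S)$ has order only $3^l$, we have $Z(Q) \le N_G(S)$ but $Z(Q) \not\le P$, so $N_G(S)$ strictly contains $P$ and the chain $N_G(P) \le N_G(S) \le P$ fails at the last inclusion. Your reduction $N_G(P) \le N_G(S) \le P_\omega$ survives, and the approach can be rescued by computing $N_{P_\omega}(P)$ rather than $N_{P_\omega}(S)$: after adjusting by an element of $P$, a normalising element must fix both $S$ and a Hall $3'$-complement $T_0 \le T$, and for $l > 1$ a generator of $T_0$ has centraliser in $G$ equal to the torus $T$, forcing the element into $N_T(S) = T_0 \le P$. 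The paper takes a different, more structural route: it observes that the proper overgroups of $P$ in $G$ are subfield subgroups and their parabolic subgroups, and that $P$ is normal in none of them, so $N_G(P)$ cannot strictly contain $P$.
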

\begin{proof} Elements of $\para(l)$ are contained in subfield subgroups or their parabolic subgroups and they are normal in neither. If two elements of $\para(l)$ have the same order then they are maximal subgroups of isomorphic, and therefore conjugate, subfield subgroups and so are themselves conjugate in $G$. \end{proof}

\begin{lemma} Elements of $\cent_\J(l)$ are self-normalising and isomorphic elements are conjugate in $G$. \end{lemma}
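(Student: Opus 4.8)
The plan is to work from the description of a member $H$ of $\cent_\J(l)$ as the centraliser $C_{G_l}(\J)$ of an involution $\J$ in a subfield subgroup $G_l\cong R(3^l)$ of $G$ (cf.\ Lemma~\ref{sc}), so that $H=\langle\J\rangle\times S$ with $S=H'$. Here one uses that $l\mid n$ with $n$ odd forces $l$ odd, and $l>1$ in this class, so $3^l$ avoids the exceptional isomorphisms of $L_2$ and $S\cong L_2(3^l)$ is non-abelian simple; consequently $Z(H)=\langle\J\rangle$, while $S\le C_G(\J)'\cong L_2(3^n)$ is a subfield subgroup by Dickson's classification \cite{dick}. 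Throughout, the operative arithmetic fact is that $n$, hence every $n/l$, is odd.

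To show $H$ is self-normalising: since $N_G(H)$ normalises $Z(H)=\langle\J\rangle$, it lies in $C_G(\J)$, which by Section~\ref{struct} equals $\langle\J\rangle\times L$ with $L=C_G(\J)'\cong L_2(3^n)$; and $N_G(H)$ normalises $H'=S$. Hence
\[ N_G(H)\;\le\;C_G(\J)\cap N_G(S)\;=\;\langle\J\rangle\times N_L(S). \]
Because $n/l$ is odd, the subfield subgroup $S\cong L_2(3^l)$ is neither contained in, nor normalised by, a copy of $PGL_2(3^l)$ inside $L$, so Dickson's description of the subgroups of $L_2(q)$ yields $N_L(S)=S$. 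Therefore $N_G(H)\le\langle\J\rangle\times S=H$, and the reverse inclusion is immediate.

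To show the conjugacy statement: given $H_1,H_2\in\cent_\J(l)$, the conjugacy of all involutions of $G$ (Section~\ref{struct}) lets us assume $H_1$ and $H_2$ share the same central involution $\J$, so $H_i=\langle\J\rangle\times S_i$ with each $S_i$ a subfield subgroup of $L\cong L_2(3^n)$ of type $L_2(3^l)$. Since $n/l$ is odd, the subfield subgroups of $L_2(3^n)$ of this isomorphism type form a single $L_2(3^n)$-conjugacy class \cite{dick}, so $S_1^g=S_2$ for some $g\in L$; as $g$ centralises $\J$ we obtain $H_1^g=H_2$. (Equivalently, one may invoke the conjugacy of the subfield subgroups $R(3^l)$ of $G$, already established in this subsection, together with the conjugacy of all involutions inside $R(3^l)$.) The step requiring real care is the appeal to Dickson's theorem, both for $N_L(S)=S$ and for the single conjugacy class of the $S_i$; this is exactly where the hypothesis that $q=3^n$ is an \emph{odd} power of $3$ enters, for if $n/l$ were even then $L_2(3^n)$ would contain a copy of $PGL_2(3^l)$ strictly normalising $S$ and $H$ would fail to be self-normalising.
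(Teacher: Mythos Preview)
Your argument is correct and follows the paper's line: both reduce $N_G(H)$ into $C_G(\J)$ by recognising $Z(H)=\langle\J\rangle$, and then finish inside $2\times L_2(3^n)$. The paper is terser---for self-normalisation it only remarks that $H$ is not among the normal subgroups of an involution centraliser, and for conjugacy it uses the already-established conjugacy of the subfield subgroups $R(3^l)$ together with conjugacy of involutions there (exactly your parenthetical alternative)---so your explicit invocation of Dickson for $N_L(S)=S$ and for the single $L$-class of $L_2(3^l)$ simply fills in what the paper leaves implicit.
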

\begin{proof} The former follows from the fact that the normaliser must centralise the central involution and hence belongs to an involution centraliser. These have two nontrivial normal subgroups, neither of which belong to $\cent_\J(l)$. The latter follows from the conjugacy in $G$ of the subfield subgroups and of their involution centralisers. \end{proof}

\begin{lemma} Elements of $\norm_V(l) \cup \norm_2(l) \cup \norm_3(l)$ are all self-normalising and isomorphic elements are conjugate in $G$. \end{lemma}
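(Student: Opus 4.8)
The plan is to prove both assertions together and uniformly. Let $N$ be a member of $\norm_V(l)$, $\norm_2(l)$ or $\norm_3(l)$, and let $A$ be the cyclic Hall subgroup of $N$: for $\norm_2(l),\norm_3(l)$ it is the Hall subgroup itself (of order $a_2(l)$ or $a_3(l)$), and for $\norm_V(l)$ it is the rotation subgroup, of order $(3^l+1)/4$, of the normal dihedral $D_{(3^l+1)/2}\nsg N$. In each case $\vert A\vert$ is odd and prime to $3$ whereas $[N:A]\in\{6,24\}$, so $A$ is the unique subgroup of $N$ of its order, hence characteristic, and $N_G(N)\le N_G(A)$. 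If $l=n$ then $N$ is a maximal subgroup of the simple group $G$, not normal, so $N_G(N)=N$; so assume $l<n$, so that $N$ sits inside a subfield subgroup $G_l\cong R(3^l)$ as the $G_l$-normaliser of $A$.

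Next I would identify a maximal overgroup of $N$. By Lemma \ref{numb} the order of the Hall subgroup $A$ divides exactly one of $(3^n+1)/4$, $a_2$, $a_3$, so $A$ lies in a maximal cyclic Hall subgroup $\hat A$ of $G$ of the corresponding type; this $\hat A$ is unique, since distinct $G$-conjugates of a maximal cyclic Hall subgroup intersect trivially (immediate from the centraliser data of Section \ref{struct}), and any overgroup of $A$ of that type must therefore equal $\hat A$. Hence every element normalising $A$ normalises $\hat A$, giving $N_G(N)\le N_G(A)\le N_G(\hat A)=:\hat N$, a maximal subgroup lying in $\norm_V(n)$, $\norm_2(n)$ or $\norm_3(n)$. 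Moreover every element of $N$ normalises $A$, hence $\hat A$, so $N\le\hat N$, and then $N\le\hat N\cap G_l\le N_{G_l}(\hat A\cap G_l)=N_{G_l}(A)=N$, using that $\hat A\cap G_l$, being abelian of odd order inside $C_{G_l}(A)$, equals the Hall subgroup $A$ of $G_l$; thus $N=\hat N\cap G_l$, $N\cap\hat A=A$ and $N\hat A=\hat N$.

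The heart of the matter is that $N$ contains an involution $r$ inverting $\hat A$. Every involution of $\hat N$ either centralises or inverts $\hat A$: if $\hat N\in\norm_2(n)\cup\norm_3(n)$ this is because $C_{\hat A}(r)\le C_G(r)\cong 2\times L_2(q)$ has order dividing $q(q^2-1)$, which is prime to $\vert\hat A\vert=a_2$ or $a_3$, so $C_{\hat A}(r)=1$ and a fixed-point-free involution of an odd cyclic group is inversion; if $\hat N\in\norm_V(n)$ one reads it off the structure $(2^2\times D_{(q+1)/2})\colon 3$, in which the reflections of the (conjugates of the) dihedral factor invert its rotation group $\hat A$, the three involutions of the four-group centralise $\hat A$, and there are no other involutions. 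Since $N$ carries more than three involutions (it has $a_2(l)$ or $a_3(l)\ge 7$ of them, or $3^l+4$ in the $\norm_V$ case), it has one outside that four-group, which therefore inverts $\hat A$. Finally, passing to $\hat N/A$ (legitimate as $A\nsg\hat N$), $\hat A/A$ is a normal odd cyclic subgroup and $N/A$ a complement to it; for a complement $H$ to a normal subgroup $K$ one has $N_{\hat N/A}(H)=C_K(H)H$, and here $C_{\hat A/A}(N/A)$ is centralised by the image of $r$, which inverts the odd group $\hat A/A$, hence is trivial; so $N_{\hat N/A}(N/A)=N/A$, and lifting back, $N_G(N)=N_{\hat N}(N)=N$. (The one degenerate case is $\norm_3(1)$ inside $R(3^3)$, where $\hat A=A$ is already maximal in $G$; there $N_G(N)\le N_G(\hat A)=\hat N$ still holds and one instead checks directly that $N/\hat A\cong C_6$ is self-normalising in $\hat N/\hat A\cong 2\times L_2(3)$.)

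For conjugacy, two isomorphic members $N,N'$ of a fixed class have common order $6a_\bullet(l)$, which determines $l$. If $l=n$ they are maximal subgroups of $G$ of a single type, and since all four-groups (resp. all Hall $A_2$-, resp. $A_3$-subgroups) of $G$ are conjugate, so are their normalisers. If $l<n$, each of $N,N'$ is the corresponding normaliser inside a subfield subgroup $\cong R(3^l)$; these subfield subgroups have equal order, hence are $G$-conjugate, so we may take them equal, and then the two Hall subgroups of that fixed subfield subgroup, having equal order, are conjugate within it, whence $N=N_{G_l}(A)=N_{G_l}(A')=N'$. The main obstacle is the $l<n$ case, and within it the point that $N$ contains an involution inverting the whole of $\hat A$ and not merely $A$: for the four-group normalisers the naive order count fails since $(3^n+1)/4$ need not be prime to $\vert C_G(r)\vert$, so the explicit structure of $\hat N$ must be invoked, and throughout one relies on Lemma \ref{numb} to make $\hat A$ well defined.
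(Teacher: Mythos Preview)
Your argument has a real gap in Step~5/7. The assertion $N\hat A=\hat N$ (equivalently, that $N/A$ is a complement to $\hat A/A$ in $\hat N/A$) requires $[N:A]=[\hat N:\hat A]$. This holds when $\hat N\in\norm_2(n)\cup\norm_3(n)$ (both indices are $6$) and when $N\in\norm_V(l)$ (both indices are $24$), but it fails whenever $N\in\norm_2(l)\cup\norm_3(l)$ and $3\mid n/l$: then by Lemma~\ref{numb} the Hall subgroup $A$ sits inside a Hall $A_1$-subgroup, so $\hat N\in\norm_V(n)$ with $[\hat N:\hat A]=24\neq 6=[N:A]$. Your parenthetical ``one degenerate case'' ($\norm_3(1)$ in $R(27)$, where $\hat A=A$) is just one instance; for example $N\in\norm_2(3)$ inside $G=R(3^9)$ has $\vert A\vert=19$, $\vert\hat A\vert=(3^9+1)/4=4921$, and $N\hat A$ has index $4$ in $\hat N$.

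The repair is easy and your ingredients already suffice. Since $A\nsg\hat N$ (every subgroup of the cyclic group $\hat A$ is characteristic), pass to $\hat N/\hat A$: the image $N\hat A/\hat A\cong N/A$ is a $C_6$ inside $\hat N/\hat A\cong 2\times L_2(3)$, and such a $C_6$ is self-normalising there, forcing $N_{\hat N}(N)\le N\hat A$; now your complement argument applies verbatim inside $N\hat A/A$, where $\hat A/A$ is normal with complement $N/A$ and your inverting involution $r$ finishes. This is essentially the route the paper takes in compressed form: its one-line ``the cyclic subgroups of order $6$ in $N$ are self-normalising'' is exactly the statement that a $C_6$ in $\hat N$ is self-normalising in $\hat N$, combined with the fact that all $C_6$'s in $N$ are $N$-conjugate (Schur--Zassenhaus), which yields $N_{\hat N}(N)=N$ directly. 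Your approach is more explicit and better justified than the paper's, but you need to close this one case.
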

\begin{proof} Let $N \in \norm_V(l) \cup \norm_2(l) \cup \norm_3(l)$. The subgroup $A \sg N$ contained in a Hall subgroup conjugate to $A_1$, $A_2$ or $A_3$ is characteristic, and so anything which normalises $N$ must normalise $A$, but since the cyclic subgroups of order 6 in $N$ are self-normalising, then so is $N$. The latter follows from the conjugacy in $G$ of isomorphic Hall subgroups and of isomorphic subfield subgroups.\end{proof}

\begin{lemma} Elements of $\cent_V(l)$ are normalised in $G$ by elements of $\norm_V(l)$ and isomorphic elements are conjugate in $G$. \end{lemma}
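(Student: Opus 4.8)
The plan is to describe each element of $\cent_V(l)$ explicitly and then treat conjugacy and the normaliser separately. By the analysis in Section~\ref{struct} together with the intersection lemmas above, a group $H\in\cent_V(l)$ is the centraliser $C_{G_l}(V)$ of a four-group $V$ lying in a subfield subgroup $G_l\cong R(3^l)$ (with $G_n=G$), so that $H\cong 2^2\times D_{(3^l+1)/2}$. I would set $m_l=(3^l+1)/4$, noting $m_l$ is odd because $l$ divides the odd number $n$, and $m_l>1$ because $l>1$. For the conjugacy assertion I would first invoke the lemma that all subfield subgroups $R(3^l)$ are conjugate in $G$, reducing matters to showing that inside a fixed $G_l$ all four-groups are conjugate: since $R(3^l)$ has no elements of order $4$, the four-groups through a fixed involution $\J\in G_l$ are in bijection with the involutions of $C_{G_l}(\J)/\langle\J\rangle\cong L_2(3^l)$, which form a single class on which $L_2(3^l)$ acts transitively by conjugation; lifting this and using that all involutions of $G_l$ are conjugate shows the four-groups of $G_l$ form one $G_l$-class, hence so do the $C_{G_l}(V)$, and therefore $\cent_V(l)$ is a single $G$-class.

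For the normaliser I would argue that $V$ is characteristic in $H$: since $m_l$ is odd and $>1$ the dihedral direct factor of $H$ has trivial centre, so $Z(H)$ is the $2^2$-factor, and as $V\le Z(H)$ with $|V|=|Z(H)|=4$ we get $V=Z(H)$. Hence $N_G(H)\le N_G(V)$, while on the other hand $N_{G_l}(V)$ normalises $C_{G_l}(V)=C_G(V)\cap G_l$, so $N_{G_l}(V)\le N_G(H)\le N_G(V)$. When $l=n$ this already gives $N_G(H)=N_G(V)\in\norm_V$. When $l<n$ I would examine the conjugation action of $N_G(V)\cong(2^2\times D_{(3^n+1)/2}):3$ on $C_G(V)\cong 2^2\times D_{(3^n+1)/2}$: because $V\le H$, the subgroup $H$ is the full preimage in $C_G(V)$ of its image in $C_G(V)/V\cong D_{(3^n+1)/2}$, and that image is a dihedral subgroup of order $(3^l+1)/2$. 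A routine count in dihedral groups shows $D_{(3^n+1)/2}$ contains exactly $(3^n+1)/(3^l+1)$ subgroups isomorphic to $D_{(3^l+1)/2}$ — they all share its unique cyclic subgroup of order $m_l$ — so the $N_G(V)$-orbit of $H$ has size at most $(3^n+1)/(3^l+1)$ and thus $|N_G(H)|\ge |N_G(V)|(3^l+1)/(3^n+1)=6(3^l+1)=|N_{G_l}(V)|$. Combined with $N_{G_l}(V)\le N_G(H)$ this forces $N_G(H)=N_{G_l}(V)\in\norm_V(l)$.

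The genuinely routine inputs — the dihedral subgroup count, the congruences $3^l+1\equiv 4\pmod 8$ and the oddness of $(3^n+1)/(3^l+1)$, and the identification $C_{G_l}(\J)/\langle\J\rangle\cong L_2(3^l)$ with its transitive action on involutions — I would simply cite or dispatch in a line. The step needing care is the normaliser computation for $l<n$: one must confirm that $V\le H$ makes the image of $H$ a genuine dihedral subgroup of the dihedral factor of $C_G(V)$, and that the $N_G(V)$-orbit of $H$ stays inside the finite set just counted, so that the order estimate closes up exactly against the already-known subgroup $N_{G_l}(V)$. This bookkeeping is where the bulk of the work lies, though it remains elementary.
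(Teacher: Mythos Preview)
Your proof is correct and follows essentially the same route as the paper: both identify $V=Z(H)$ as characteristic to obtain $N_G(H)\le N_G(V)$, then use that $N_{G_l}(V)$ already normalises $H$ and that $H$ is not normal in any larger four-group centraliser to conclude $N_G(H)\in\norm_V(l)$; conjugacy in both cases reduces to conjugacy of subfield subgroups and of four-groups therein. Your orbit-count inside $C_G(V)/V\cong D_{(3^n+1)/2}$ is just a more explicit version of the paper's one-line assertion---indeed one can shortcut it by noting that a proper dihedral subgroup of odd-degree dihedral group is self-normalising, giving $N_{C_G(V)}(H)=H$ directly.
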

\begin{proof} The normaliser of an element $H \in \cent_V(l)$ must normalise its central four-group and so is contained in a four-group normaliser. Since $H$ is not normal in a four-group centraliser but will be normalised by an element of order 3, the normaliser of $H$ in $G$ belongs to $\norm_V(l)$. Any two isomorphic elements of $\cent_V(l)$ are centralisers of four-groups in conjugate subfield subgroups, from which it follows that they are conjugate in $G$.\end{proof}

\begin{lemma} Elements of $\dee_H(l)$ are normalised in $G$ by a subgroup isomorphic to either $D_{2a_i(l)}:3$ or $(2^2 \times D_{2a_i(l)})\colon3$, where $i=2,3$, and isomorphic elements are conjugate in $G$.\end{lemma}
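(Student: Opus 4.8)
The plan is to mirror the strategy used for the other Hall-subgroup normalisers: identify the ambient maximal subgroups in which an element $H \in \dee_H(l)$ can live, pin down which subgroup of such an ambient group is its normaliser, and then deduce conjugacy from the known conjugacy of subfield subgroups. Fix $H \in \dee_H(l)$, so $H \cong D_{2a_i(l)}$ with $i = 2$ or $3$ and $3l \mid n$. The cyclic subgroup $A = \langle a \rangle \sg H$ of order $a_i(l)$ is characteristic in $H$, so $N_G(H) \leqslant N_G(A)$. The hypothesis $3l \mid n$ together with Lemma \ref{numb} tells us exactly which Hall subgroup normaliser of $G_{3l}$ (indeed of $G$) the cyclic group $A$ sits inside: $a_i(l)$ divides precisely one of $a_1(n), a_2(n), a_3(n)$, and correspondingly $A$ lies in a Hall subgroup of $G$ conjugate to the appropriate $A_j$. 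This is the step I expect to do the real work, because one must track how the element-order information from Section~\ref{struct} (centralisers of $v, w$, and of elements dividing $(q+1)/4$) restricts the possible overgroups of $A$, and then intersect the normaliser of that overgroup with the ``$D$-part'' to see what normalises $H$ itself.

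Next I would compute $N_G(H)$ explicitly. Since $H$ already contains the involutions inverting $A$, and $N_G(A)$ is (a subgroup of) a Hall normaliser of shape built from $A_j \colon 6$ or $(2^2 \times D_{\cdots})\colon 3$, one checks that the normaliser of $H$ inside $N_G(A)$ is obtained by adjoining the order-$3$ field/diagonal automorphism that normalises $A$ and, in the $A_1$-type case, the extra central four-group: this yields either $D_{2a_i(l)}\colon 3$ or $(2^2 \times D_{2a_i(l)})\colon 3$ as claimed. The two cases correspond exactly to whether the relevant Hall subgroup of the overgroup $G_{3l}$ is of type $A_2/A_3$ (normalised by a cyclic group of order $6$, giving the $D_{2a_i(l)}\colon 3$ shape) or of type $A_1$ (normalised by $(2^2 \times D_{(q'+1)/2})\colon 3$, giving the $(2^2 \times D_{2a_i(l)})\colon 3$ shape), as discussed before Lemma~\ref{numb}; here one uses that an element of $H$ lying in $G_m$ is normalised in $G_m$ either by a cyclic $6$ or by $2 \times L_2(3)$, and the dihedral extension of that is precisely one of the two listed shapes.

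Finally, for conjugacy: two isomorphic elements $H, H' \in \dee_H(l)$ have cyclic parts $A, A'$ of the same order $a_i(l)$, and by the remark following Definition~\ref{numb} together with the conjugacy of isomorphic Hall subgroups in $G$ (established in the preceding lemmas of Section~\ref{cnr}, ultimately from \cite{ln}), $A$ and $A'$ are conjugate in $G$; conjugating, we may assume $A = A'$, whence $H$ and $H'$ are both dihedral subgroups of $N_G(A)$ with the same cyclic part, and these are all conjugate under $N_G(A)$ because the involutions inverting a fixed cyclic Hall subgroup form a single conjugacy class in its normaliser (this is exactly the structure of the four-group normalisers and of $A_2,A_3$ normalisers described in Section~\ref{struct}). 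The one subtlety to be careful about is the genuinely distinct behaviour of the $a_1$-type case, where $A$ lies in a four-group centraliser: there one must invoke that the three conjugate $D_{(q+1)/2}$'s and the normal one are tracked correctly, so that the dihedral subgroups of order $2a_i(l)$ still form a single $G$-class; this bookkeeping, rather than any deep idea, is the main obstacle.
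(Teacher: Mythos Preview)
Your approach is essentially the same as the paper's: pass to the characteristic cyclic subgroup $A \leq H$, locate $A$ inside a Hall subgroup of the appropriate type via Lemma~\ref{numb}, and read off the normaliser from the known structure of the Hall-subgroup normalisers in Section~\ref{struct}. The paper's own proof is in fact considerably terser than yours---it records that the cyclic part is normalised by an element of order~$3$ (and hence by a commuting involution), then invokes Lemma~\ref{numb} to decide whether an additional centralising four-group appears, and does not spell out the conjugacy argument at all---so your more explicit treatment, including the conjugacy discussion and the flag on the $A_1$-type bookkeeping, is entirely in the same spirit and if anything more complete.
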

\begin{proof} If $D \in D_H(l)$ is isomorphic to $D_{2a_2(l)}$ or $D_{2a_3(l)}$ then $D$ belongs to the normaliser of a Hall subgroup conjugate to $A_i$ for $i=1,2$ or 3. Since the normal Hall subgroup in $D$ is normalised in $G$ by $\T$, an element of order 3, then it is also normalised by $\J$, an element of order 2 that commutes with $\T$. By Lemma \ref{numb}, $D$ may additionally be centralised in $G_m \sg G$, a subfield subgroup, by a four-group according to the parity of $m/l \equiv$ mod 3.\end{proof}

\begin{lemma} Elements of $\cent_\J^\omega(l)$ are self-normalising and isomorphic elements are conjugate in $G$. \end{lemma}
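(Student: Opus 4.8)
The plan is to attach to each $H\in\cent_\J^\omega(l)$ two canonical invariants, a central involution and a fixed point, use these to trap $N_G(H)$ inside a small overgroup, and then prove conjugacy by a flag-transitivity argument inside the ambient subfield subgroup. Writing $H=2\times(3^l\colon\tfrac{3^l-1}{2})$, the hypothesis $l>1$ makes the factor $3^l\colon\tfrac{3^l-1}{2}$ a Frobenius group with trivial centre, so $Z(H)=\langle\J\rangle$ for a single involution $\J$; hence $\J$ is characteristic in $H$ and $N_G(H)\le N_G(\langle\J\rangle)=C_G(\J)$. The unique normal Sylow $3$-subgroup of $H$, of order $3^l>1$, contains an element of order $3$ and so fixes exactly one point $\omega\in\Omega$; since $H$ fixes $\omega$ this forces $H$ to fix no other point, so $N_G(H)\le P_\omega$ as well. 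Thus $N_G(H)\le C_G(\J)\cap P_\omega$, which by the analysis of Section~\ref{struct} is the stabiliser of $\omega$ in $C_G(\J)\cong 2\times L_2(3^n)$, namely $\widehat H:=\langle\J\rangle\times B$ with $B\cong 3^n\colon\tfrac{3^n-1}{2}$ a Borel subgroup of $L_2(3^n)$; in particular $\widehat H\in\cent_\J^\omega(n)$ and $H\le\widehat H$. When $l=n$ we have $\widehat H=H$ by counting, so $N_G(H)=H$ immediately.

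For $l<n$ it remains to show $N_{\widehat H}(H)=H$. Since $B$ has index $2$ in $\widehat H$ while $\J\in H\setminus B$, the subgroup $K:=H\cap B$ has index $2$ in $H$, meets $\langle\J\rangle$ trivially, and $H=\langle\J\rangle\times K$ with $K\le B$ isomorphic to $3^l\colon\tfrac{3^l-1}{2}$; hence $N_G(H)=N_{\widehat H}(H)=\langle\J\rangle\times N_B(K)$, and the claim reduces to $N_B(K)=K$. I expect this to be the main obstacle, being the only genuinely computational point. Inside $B=U\colon T$, with $U\cong\FF_{3^n}$ the unipotent radical and $T$ cyclic of order $\tfrac{3^n-1}{2}$ acting by multiplication, the characteristic Sylow $3$-subgroup $W\le U$ of $K$ is a scalar multiple of $\FF_{3^l}\le\FF_{3^n}$ and $K$ is a Borel of the subfield subgroup $L_2(3^l)$; using that $n/l$ is odd, so that $\FF_{3^l}^\times$ is not contained in the squares of $\FF_{3^n}^\times$, one first obtains $N_B(W)=U\colon C$ with $C$ the cyclic $\tfrac{3^l-1}{2}$ inside $K$, and then that $C$ acts freely on $U/W$, whence $N_B(K)=W\colon C=K$; alternatively one can quote Dickson's description of the subgroups of $L_2(3^n)$ \cite{dick}. (For $l=1$ the group $C$ is trivial and $N_B(K)$ is all of $U$, consistent with the exclusion $\cent_\J^\omega(1)=\cent_6^*$.)

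For conjugacy, recall that every $H\in\cent_\J^\omega(l)$ occurs as the stabiliser of a point of the block $\Omega^\J\cap\Omega(l)$ inside the involution centraliser $C_{G_l}(\J)\cong 2\times L_2(3^l)$ of a subfield subgroup $G_l\cong R(3^l)$ (with $G_l=G$ and $\Omega(l)=\Omega$ when $l=n$); equivalently, $H$ is the stabiliser in $G_l$ of the flag $(\omega,\langle\J\rangle)$ with $\omega\in\Omega^\J\cap\Omega(l)$. Given two such subgroups $H_1,H_2$ of the same isomorphism type, their ambient subfield subgroups are conjugate in $G$ (conjugacy of subfield subgroups, \cite{ln}), so after a conjugation I may assume $H_1,H_2\le G_l$ for one fixed $G_l$. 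Since $G_l\cong R(3^l)$ is $2$-transitive on $\Omega(l)$, all its involutions are conjugate in $G_l$, and each $C_{G_l}(\J)$ is transitive on the corresponding block $\Omega^\J\cap\Omega(l)$, the group $G_l$ is transitive on flags of this form, so $H_1$ and $H_2$ are conjugate already in $G_l$, hence in $G$. This completes the proof.
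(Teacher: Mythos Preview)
Your proof is correct. Both you and the paper begin by recognising that the central involution $\J$ is characteristic in $H$, forcing $N_G(H)\le C_G(\J)$, and both deduce conjugacy from the conjugacy of subfield subgroups and of their involution centralisers. From that common starting point, however, the two arguments diverge.

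The paper's proof is very terse: after $N_G(H)\le C_G(\J)$ it simply asserts that $H$ is not normal in an involution centraliser and concludes self-normalisation, then cites conjugacy of subfield subgroups and their involution centralisers. Your route is more explicit: you additionally trap $N_G(H)$ inside the parabolic $P_\omega$ via the unique fixed point of the normal Sylow $3$-subgroup, so that $N_G(H)\le C_G(\J)\cap P_\omega=\widehat H\in\cent_\J^\omega(n)$, and then reduce to a concrete normaliser computation inside a Borel subgroup $B$ of $L_2(3^n)$. The key arithmetic input---that $n/l$ is odd, so $\FF_{3^l}^\times\not\le(\FF_{3^n}^\times)^2$ and hence $N_B(W)=U\colon C$ with $|C|=(3^l-1)/2$---together with the free action of $C$ on $U/W$ does give $N_B(K)=K$, and your sketch of this is sound. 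For conjugacy you spell out the flag-transitivity of $G_l$ on pairs $(\omega,\J)$, which is only implicit in the paper's one-line appeal to conjugacy of subfield subgroups and their involution centralisers.

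In short: same opening move, but you supply the missing middle by bringing in the parabolic and carrying out the Borel computation, at the cost of a longer argument; the paper's version is shorter but leans heavily on the reader to fill in why ``not normal in $C_G(\J)$'' suffices.
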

\begin{proof} The normaliser in $G$ of $H \in \cent_\J^\omega(l)$ must fix the central involution and so is contained in an involution centraliser. Since $H$ is not a normal subgroup of such a subgroup we have that $H$ is self-normalising. Conjugacy of isomorphic elements follows from the conjugacy of subfield subgroups and their involution centralisers. \end{proof}

\begin{lemma} Elements of $\eff(l)$ are normalised in $G$ by subgroups isomorphic to $3^{2n}\colon(3^d-1)$ and isomorphic elements are conjugate in $G$. \end{lemma}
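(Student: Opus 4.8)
The plan is to read $N_G(H)$ off the structure of a Sylow $3$-subgroup of $G$. By construction, $H \in \eff(l)$ is (conjugate to) a Sylow $3$-subgroup of the factor $L_2(3^l)$ of the centraliser $2 \times L_2(3^l)$ of an involution in a subfield subgroup $G_l \cong R(3^l)$ of $G$; in particular $H$ is elementary abelian of order $3^l$ and $l>1$. First I would check that every nonidentity $u \in H$ lies in $\C_3^*$: such a $u$ lies in $\C_3^*(G_l)$, so $C_{G_l}(u)$ and hence $C_G(u)$ has even order, whereas centralisers of elements of $\C_3^0$ in $G$ have odd order $3^{3n}$; thus $u \in \C_3^*$ and $\vert C_G(u)\vert = 2 \cdot 3^{2n}$. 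Since commuting elements of $\C_3^*$ fix a common point, all of $H$ fixes a single $\omega \in \Omega$. Writing $S$ for the (normal) Sylow $3$-subgroup of $P_\omega$ and $A \nsg S$ for its normal elementary abelian subgroup of order $q^2 = 3^{2n}$ (whose nonidentity elements off $Z(S)$ all lie in $\C_3^*$), we get $H \sg A$ and $H \cap Z(S) = 1$; and since $N_G(H)$ fixes the fixed-point set $\{\omega\}$ of $H$, also $N_G(H) \sg P_\omega$.

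Next I would show that the $3$-part of $N_G(H)$ equals $3^{2n}$. As $A$ is abelian it centralises $H$, so $A \sg C_S(H) \sg N_S(H)$. Conversely $C_S(u) = A$ for each nonidentity $u \in H$ (it contains $A$, and $\vert C_G(u)\vert_3 = 3^{2n} = \vert A\vert$), so $C_S(H) = A$; and because $[A,S] \sg Z(S)$, any $g \in N_S(H)$ and $h \in H$ satisfy $[h,g] \in H \cap Z(S) = 1$, whence $g \in C_S(H)$. Hence $N_S(H) = A$, so $N_G(H) \cap S = A$ and $\vert N_G(H)\vert_3 = 3^{2n}$.

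It then remains to identify a complement. As $A \nsg N_G(H)$ with $N_G(H)/A \hookrightarrow P_\omega/S$ cyclic of order $q-1$ (a $3'$-number), Schur--Zassenhaus gives $N_G(H) = A \rtimes M$ with $M$ cyclic of order $e$ dividing $q-1$. For the lower bound $3^l - 1 \mid e$: writing $\J$ for the involution with $H \sg C_G(\J) \cong 2 \times L_2(q)$, the group $H$ is a Sylow $3$-subgroup of $L_2(3^l) \sg L_2(q)$, hence is normalised in $L_2(q)$ by the maximal torus of $L_2(3^l)$, of order $(3^l - 1)/2$; together with $\J$, which centralises $H$, and using that $(3^l-1)/2$ is odd, this produces a cyclic subgroup of $N_G(H)$ of order $3^l - 1$. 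For the matching upper bound I would use the action of a torus complement $T \cong P_\omega/S$ on $A$: since $[A,S] \sg Z(S)$, an element of $N_G(H)/A$ acts on $A/Z(S)$ through $T$ and stabilises the image of $H$, which is the subfield $\FF_{3^l}$ of $A/Z(S) \cong \FF_{3^n}$; as the stabiliser of $\FF_{3^l}$ under the scalar action of $T$ is $\FF_{3^l}^\times$, of order $3^l - 1$, this gives $e \mid 3^l - 1$. Hence $e = 3^l - 1$ and $N_G(H) \cong 3^{2n}\colon(3^l - 1)$, which is the asserted shape $3^{2n}\colon(3^d - 1)$ with $d = l$.

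Finally, for conjugacy, let $H, H' \in \eff(l)$; each is a Sylow $3$-subgroup of the $L_2(3^l)$-factor of an involution centraliser of a subfield subgroup of $G$ isomorphic to $R(3^l)$. Since subfield subgroups of the same order are conjugate in $G$, since all involutions of $R(3^l)$ are conjugate, and since Sylow $3$-subgroups of $L_2(3^l)$ are conjugate by Sylow's theorem, $H$ and $H'$ are conjugate in $G$. The step I expect to be the main obstacle is the determination of the complement $M$: its lower bound is geometric, but the matching upper bound --- like the structural facts used above, that $A$ is self-centralising in $S$ with $[A,S] \sg Z(S)$ and that $\C_3^*$-elements have centraliser of order $2q^2$ --- rests on the Lie-theoretic description of $R(q)$ (the conjugacy-class data of Ward and the root-subgroup description of Levchuk and Nuzhin) rather than on the permutation action alone.
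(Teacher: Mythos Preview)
Your argument is correct and is considerably more explicit than the paper's. The paper proves this lemma in two sentences: it cites Ward for the fact that $C_G(H)$ equals the normal abelian subgroup $A$ of order $q^2$ in the ambient Sylow $3$-subgroup, and then simply asserts that ``direct calculation with the $7\times 7$ matrices'' of Levchuk--Nuzhin shows the remaining factor is cyclic of order $3^l-1$; conjugacy is dismissed in a single (rather opaque) clause. You instead derive $N_S(H)=A$ structurally from $H\cap Z(S)=1$ together with $[A,S]\sg Z(S)$, then split the problem into a clean lower bound (the torus of $L_2(3^l)$ together with the central involution) and an upper bound via the action of $T\cong P_\omega/S$ on $A/Z(S)$. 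This is a genuinely more informative route: it explains \emph{why} the normaliser has the shape $3^{2n}\!:\!(3^l-1)$ rather than merely verifying it.

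The one point to be aware of is that your upper bound --- that the stabiliser in $T$ of the image $\bar H\sg A/Z(S)$ has order exactly $3^l-1$ --- implicitly uses that $T$ acts faithfully on $A/Z(S)$ with $\bar H$ corresponding to the subfield $\FF_{3^l}$ under the root-group identification. You flag this honestly as resting on the Levchuk--Nuzhin description, and indeed this is precisely the content of the paper's ``direct calculation''; so your argument and the paper's converge at the same external input, but yours packages it inside a cleaner structural framework. Your conjugacy argument (via conjugacy of subfield subgroups, then of involutions, then Sylow) is also clearer than the paper's.
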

\begin{proof} If $H \in \eff(l)$ has order $3^h$, then $H$ is isomorphic to the additive subgroup of $L_2(3^h) \sg G$. The centraliser of $H$ in $G$ is the normal abelian subgroup of order $q^2$ of the Sylow 3-subgroup of $G$ to which $H$ belongs \cite[Chapter III]{ward}. It can be verified by direct calculation with the $7 \times 7$ matrices given in \cite{ln} that the elements normalising, but not centralising, $H$ in $G$ are those of a cyclic subgroup of order $3^h-1$, isomorphic to the multiplicative subgroup of $L_2(3^h)$. The conjugacy is clear, since they are normalisers of conjugate Sylow 3-subgroups. \end{proof}

\begin{lemma} Elements of $\cent_0(l)$ are normalised in $G$ by a subgroup isomorphic to $2 \times D_{q-1} \cong D_{2(q-1)}$ and isomorphic elements are conjugate in $G$. \end{lemma}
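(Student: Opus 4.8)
The plan is to pin down $N_G(H)$ by recognising that $H$ sits inside a unique involution centraliser and then invoking the subgroup structure of $L_2(q)$. Fix $H \in \cent_0(l)$, cyclic of order $3^l - 1$; since $l>1$ (and $n$, hence $l$, is odd) this order is even and at least $26$, so $H$ contains a unique involution $\J$, and $\langle\J\rangle$ is characteristic in $H$. As $H$ is abelian with $\J \in H$ we have $H \leq C_G(\J) \cong 2 \times L_2(q)$, and $N_G(H) \leq N_G(\langle\J\rangle) = C_G(\J)$. Writing $C_G(\J) = \langle\J\rangle \times L$ with $\langle\J\rangle = Z(C_G(\J))$ and $L \cong L_2(q)$, and noting $\J \notin L$, the subgroup $\bar H := H \cap L$ is cyclic of order $(3^l-1)/2 \geq 13$ and $H = \langle\J\rangle \times \bar H$.

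First I would identify $\bar H$ inside a split maximal torus of $L$. Every nontrivial element of $\bar H$ has order $>2$ dividing $q-1$, so by the discussion of involution centralisers in Section \ref{struct} it fixes exactly two points of the block $\Omega^\J$, on which $L$ acts as $L_2(q)$ on $\mathbb{P}^1(\FF_q)$; hence $\bar H$ lies in the pointwise stabiliser $T$ of a pair of points, a split maximal torus of $L$ cyclic of order $(q-1)/2$. Since $|\bar H| \geq 3$, $\bar H$ contains a regular semisimple element, so $C_L(\bar H) = T$; consequently any element of $L$ normalising $\bar H$ also normalises $C_L(\bar H) = T$ and therefore lies in $N_L(T) \cong D_{q-1}$, while conversely $N_L(T)$ normalises $\bar H$ (the torus centralises it and the Weyl involution inverts it). Thus $N_L(\bar H) = N_L(T) \cong D_{q-1}$.

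Now an element $\J^a h$ of $C_G(\J)$ (with $h \in L$) normalises $H = \langle\J\rangle \times \bar H$ precisely when $h$ normalises $\bar H$, so $N_G(H) = N_{C_G(\J)}(H) = \langle\J\rangle \times N_L(\bar H) \cong 2 \times D_{q-1}$. Since $n$ is odd, $q-1 = 3^n - 1 \equiv 2 \pmod 4$, so $(q-1)/2$ is odd and $2 \times D_{q-1} \cong D_{2(q-1)}$, giving the claimed shape. For conjugacy, all involutions of $G$ being conjugate, it is enough to show that two members of $\cent_0(l)$ containing the same $\J$ are conjugate in $C_G(\J)$: each meets $L$ in a subgroup of order $(3^l-1)/2$ contained in a split maximal torus; all split maximal tori of $L \cong L_2(q)$ are conjugate, and a cyclic group of order $(q-1)/2$ has a unique subgroup of each divisor order, so the two subgroups of $L$ are conjugate in $L$, hence the two members of $\cent_0(l)$ are conjugate in $C_G(\J) \leq G$.

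I expect the only delicate step to be the computation of $N_L(\bar H)$: one must rule out extra normalising elements, which is exactly where $|\bar H| \geq 3$ is used, forcing $C_L(\bar H)$ to be the whole split torus. Everything else is bookkeeping with the conjugacy-class data of Section \ref{struct} and standard facts about $L_2(q)$. It is also worth stressing that $N_G(H)$ does not depend on $l$, so the normaliser should not be sought inside a subfield subgroup $R(3^l)$.
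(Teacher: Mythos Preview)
Your proof is correct and follows essentially the same route as the paper: reduce to the involution centraliser $C_G(\J)\cong 2\times L_2(q)$ via the characteristic involution $\J\in H$, then compute the normaliser inside $L_2(q)$. The paper is much terser---it simply asserts that $H$ is centralised by a unique Hall subgroup conjugate to $A_0$ and that $N_G(H)$ sits in $C_G(\J)$, then reads off $D_{2(q-1)}$---whereas you spell out the split-torus argument in $L_2(q)$ explicitly. For conjugacy the paper argues via the conjugacy of Hall subgroups $A_0$ (each $H$ lies in a unique cyclic group of order $q-1$, and these are all conjugate), which is the same statement as your conjugacy of split tori, viewed from $G$ rather than from $L$.
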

\begin{proof} If $H \in \cent_0(l)$ then it is centralised in $G$ by a unique Hall subgroup conjugate to $A_0$ and its normaliser in $G$ belongs to the centraliser of the unique involution in $H$. Hence the normaliser in $G$ of $H$ is $N_G(H) \cong D_{2(q-1)}$. The conjugacy of elements of $\cent_0(l)$ follows from the fact that they are normal and unique in the Hall subgroups to which they belong, and that the Hall subgroups $A_0$ are conjugate in $G$. \end{proof}

\begin{lemma} Elements of $\cent_\J(1)$ are conjugate in $G$ and self-normalising. \end{lemma}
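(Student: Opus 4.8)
The plan rests on the abstract structure of $H$: every member of $\cent_\J(1)$ is isomorphic to $2\times L_2(3)\cong C_2\times A_4$, and such a group has centre $Z(H)=\langle\tau\rangle$ of order $2$ and, since $\mathrm{Hom}(A_4,C_2)=1$, a unique subgroup $A\cong A_4$, so that $H=\langle\tau\rangle\times A$ with both $\langle\tau\rangle$ and $A$ characteristic in $H$. For the conjugacy assertion I would first use that all involutions of $G$ are conjugate (Section \ref{struct}), so it suffices to conjugate two members $H_1,H_2$ of $\cent_\J(1)$ sharing the same central involution $\tau$. Any $g\in G$ with $H_1^g=H_2$ carries $Z(H_1)=\langle\tau\rangle$ to $Z(H_2)=\langle\tau\rangle$, hence $g\in C_G(\tau)$; by the description of involution centralisers in Section \ref{struct} we may write $C_G(\tau)=\langle\tau\rangle\times L$ with $L=C_G(\tau)'\cong L_2(q)$, and then $H_i=\langle\tau\rangle\times A_i$ with $A_i\cong A_4\sg L$. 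So conjugacy of $H_1,H_2$ in $G$ reduces to conjugacy of $A_1,A_2$ in $L_2(q)$, i.e.\ to the statement that the $A_4$-subgroups of $L_2(q)$ form a single conjugacy class. Since $q=3^n$ with $n$ odd we have $q\equiv 3\pmod 8$, and for $q\equiv\pm 3\pmod 8$ this is a standard consequence of Dickson's classification \cite{dick}: in $PGL_2(q)$ there is one class of $A_4$-subgroups, the normaliser of one being an $S_4$; and as $q\equiv\pm 3\pmod 8$ this $S_4$ is not contained in $PSL_2(q)$, so $PGL_2(q)=PSL_2(q)\cdot S_4$ and the unique $PGL_2(q)$-class does not split on passing to $L_2(q)=PSL_2(q)$.

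For the self-normalising assertion I would argue in the same spirit. If $H\in\cent_\J(1)$ has central involution $\tau$, then every $g\in N_G(H)$ normalises $\langle\tau\rangle$, so $N_G(H)\sg C_G(\tau)=\langle\tau\rangle\times L$; writing $H=\langle\tau\rangle\times A$ as above, an element $(\tau^i,\ell)$ with $\ell\in L$ lies in $N_G(H)$ exactly when $\ell$ normalises $A$, so $N_G(H)=\langle\tau\rangle\times N_L(A)$. Since $q\equiv 3\pmod 8$, the group $L\cong L_2(q)$ contains no copy of $S_4$ \cite{dick}, so the normaliser in $L$ of a subgroup isomorphic to $A_4$ equals that subgroup. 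Hence $N_L(A)=A$ and $N_G(H)=\langle\tau\rangle\times A=H$.

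The only genuinely delicate point is the claim that the $A_4$-subgroups, equivalently the Klein four-subgroups, of $L_2(3^n)$ lie in a single conjugacy class rather than splitting into two: this is exactly what the congruence $3^n\equiv 3\pmod 8$ for odd $n$ supplies, through the non-embedding of $S_4$ into $L_2(3^n)$, which is also what makes $C_G(\tau)$-conjugacy (rather than merely $G$-conjugacy) available and what forces the normaliser of such a subgroup down to itself. Everything else is routine manipulation with the direct-product structure $C_2\times A_4$ and the facts about involution centralisers already recorded in Section \ref{struct}.
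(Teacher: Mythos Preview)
Your proof is correct and follows the same overall strategy as the paper: pass to the centraliser $C_G(\tau)=\langle\tau\rangle\times L$ of the central involution and work inside $L\cong L_2(q)$. The paper's version is extremely terse---for self-normalising it says only that $N_G(H)$ lies in an involution centraliser ``in which it is not normal'', and for conjugacy only that it ``follows from the conjugacy of involutions''---whereas you make explicit the step that actually does the work, namely that $q=3^n$ with $n$ odd gives $q\equiv 3\pmod 8$, so by Dickson's classification $L_2(q)$ contains no $S_4$, forcing $N_L(A)=A$ for $A\cong A_4$ and giving a single $L_2(q)$-class of such $A$. Your argument thus fills in precisely the point the paper leaves implicit; there is no essential difference in approach, only in the level of detail.
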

\begin{proof} The normaliser of an element of $\cent_\J(1)$ must centralise its central involution and so is contained in an involution centraliser, in which it is not normal. The conjugacy follows from the conjugacy of involutions.\end{proof}

Most of the proofs of the following have already been established, we state them for completeness.

\begin{lemma} Elements of $\ee$ are conjugate in $G$ and normalised by a subgroup isomorphic to $2^3\colon7\colon3$. Elements of $\vier$ are conjugate in $G$ and normalised by a subgroup isomorphic to $(2^2 \times D_\frac{q+1}{2})\colon3$. \end{lemma}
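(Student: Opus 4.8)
\subsection*{Proof proposal}

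The plan is to treat the two classes separately, drawing almost entirely on the structural facts already recorded in Section~\ref{struct}. For $\ee$: the Sylow $2$-subgroups of $G$ are elementary abelian of order $8$, they form a single conjugacy class by Sylow's theorem (indeed, as already observed in Section~\ref{struct}, a subgroup isomorphic to a Sylow $2$-subgroup is conjugate to one), and the normaliser in $G$ of a Sylow $2$-subgroup was stated there to have shape $2^3\colon7\colon3$. So this half is immediate.

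For $\vier$ I would first show that all four-groups of $G$ are conjugate. Every four-group $V$ is contained in some Sylow $2$-subgroup $E\cong 2^3$, and since $E$ is abelian, $V$ is one of the seven hyperplanes of $E$ viewed as a $3$-dimensional $\FF_2$-space. The complement $7\colon3$ in $N_G(E)=2^3\colon7\colon3$ acts faithfully on $E$, and the element of order $7$ is a Singer cycle of $\mathrm{GL}(E)\cong\mathrm{GL}_3(2)$, so it acts with a single orbit on $E\setminus\{1\}$ and, dually, transitively on the seven hyperplanes of $E$. Hence all four-groups contained in $E$ are conjugate under $N_G(E)$; since all Sylow $2$-subgroups are conjugate in $G$, all four-groups of $G$ are conjugate. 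It then remains to identify $N_G(V)$, and this has already been done in Section~\ref{struct}: a four-group $V=\langle \J_1,\J_2\rangle$ is centralised in $G$ by a dihedral subgroup $D_{(q+1)/2}$ and normalised by an element $\T\in\C_3^*$, giving $N_G(V)\cong(2^2\times D_{(q+1)/2})\colon3$.

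There is essentially no obstacle here; the only point deserving a moment's care is the assertion that the $7\colon3$ atop a Sylow $2$-subgroup permutes its seven four-groups transitively, which follows from the order-$7$ element being a Singer cycle and hence acting with one orbit on the nonzero vectors of $E$ (and correspondingly on the hyperplanes). An alternative route to the conjugacy of four-groups, which I would mention in passing, is that $V$ is the (characteristic) centraliser of the normal Hall subgroup conjugate to $A_1$ inside $N_G(V)$, so conjugacy of these Hall subgroups forces conjugacy of their four-group normalisers and hence of the four-groups themselves; but the Sylow $2$-subgroup argument is the cleanest.
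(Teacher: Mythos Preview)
Your argument is correct. The paper's own proof is a single sentence: it cites Ward \cite{ward} for the fact that $2$-subgroups of the same order are conjugate in $G$ (this simultaneously handles $\ee$ and $\vier$), notes that the normaliser of a Sylow $2$-subgroup has shape $2^3\colon7\colon3$, and declares the rest clear from Section~\ref{struct}.

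For $\ee$ you do the same. For $\vier$ you take a different, more self-contained route: rather than invoking Ward for conjugacy of four-groups, you deduce it from the structure of $N_G(E)=2^3\colon7\colon3$ by observing that the order-$7$ element acts as a Singer cycle on $E$ and hence transitively on its seven hyperplanes. This is legitimate, and in fact the Singer-cycle claim is already implicit in Section~\ref{struct} (``all involutions inside a Sylow $2$-subgroup normaliser are conjugate'' forces the $7$-element to permute the seven involutions of $E$ in a single cycle). Your approach buys independence from the external citation at this step and makes the mechanism of conjugacy visible; the paper's citation is terser. The alternative you mention via the characteristic Hall subgroup $A_1$ inside $N_G(V)$ also works and is closer in spirit to how the paper handles conjugacy questions elsewhere in Section~\ref{cnr}.
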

\begin{proof} Ward \cite{ward} proves that 2-subgroups of the same order are conjugate in $G$ and the normaliser of a Sylow 2-subgroup has shape $2^3:7:3$. The rest is clear. \end{proof}

\begin{lemma} The following are conjugacy classes of subgroups of $G$: \begin{enumerate}
\item the class $\cent_6^*$, elements of which are normalised in $G$ by a subgroup of order $2q$,
\item the class $\cent_3^*$, elements of which are normalised in $G$ by a subgroup of order $2q^2$, and
\item the class $\cent_2$, elements of which are normalised in $G$ by a subgroup of order $q(q^2-1)$.
\end{enumerate}\end{lemma}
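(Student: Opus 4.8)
The plan is to dispatch the three classes one at a time, reading the relevant facts off the conjugacy-class information assembled at the start of this section and ultimately off Ward's character table \cite{ward}. Two observations do all the work: (i) the involutions of $G$ form a single conjugacy class, and each of $\C_3^+$, $\C_3^-$, $\C_6^+$, $\C_6^-$ is a full conjugacy class of $G$; and (ii) inversion interchanges $\C_3^+$ with $\C_3^-$ and $\C_6^+$ with $\C_6^-$, so neither $\T$ nor $\J\T$ is conjugate in $G$ to its inverse. Once the conjugacy statement for a class is in hand, the normaliser computation is the standard remark that an element normalising a cyclic subgroup either centralises a fixed generator or carries it to another generator, and observation (ii) rules out the second alternative.

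For $\cent_2$: since all involutions of $G$ are conjugate, so are all subgroups $\langle \J \rangle$, and since the group of order $2$ has no nontrivial automorphism, $N_G(\langle \J \rangle) = C_G(\J) \cong 2 \times L_2(q)$, which has order $q(q^2-1)$. For $\cent_3^*$: a member of this class is $\langle \T \rangle = \{1,\T,\T^{-1}\}$ with $\T \in \C_3^*$, containing exactly one element of $\C_3^+$ and one of $\C_3^-$, so by (i), and because $\langle \T^{-1}\rangle = \langle \T\rangle$, any two such subgroups are conjugate in $G$; by (ii) no element of $G$ inverts $\T$, whence $N_G(\langle \T \rangle) = C_G(\T)$, of order $2q^2$.

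For $\cent_6^*$: here $(\J\T)^2 = \T^{-1}$ and $(\J\T)^3 = \J$, and the two generators of $\langle \J\T\rangle$ are $\J\T$ and $(\J\T)^{-1} = \J\T^{-1}$, lying in $\C_6^+$ and $\C_6^-$ respectively; exactly as above, (i) shows the subgroups $\langle \J\T\rangle$ form a single conjugacy class and (ii) gives $N_G(\langle \J\T\rangle) = C_G(\J\T)$, of order $2q$. The one point here that is not pure bookkeeping with cyclic groups is observation (ii): the assertion that $\T$ (resp.\ $\J\T$) is not $G$-conjugate to its inverse amounts to the statement that $\C_3^+ \neq \C_3^-$ (resp.\ $\C_6^+ \neq \C_6^-$) as conjugacy classes of $G$, which is read from the character table rather than from the permutation action --- this is the step I would flag as the one to state carefully, though for the present paper it has already been recorded in the conjugacy-class summary above.
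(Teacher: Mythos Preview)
Your argument is correct and follows the same route as the paper: use that a generator of each cyclic subgroup is not $G$-conjugate to its inverse (read off from the conjugacy-class information already recorded) to conclude that the normaliser equals the centraliser, whose order has been established. Your write-up is simply more explicit than the paper's one-line proof, in particular spelling out why each class forms a single $G$-conjugacy class of subgroups, which the paper leaves implicit.
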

\begin{proof} Elements of subgroups of orders 2, 3, or 6 are not conjugate in $G$, hence their normalisers in $G$ are equal to their centralisers in $G$, which have been established. \end{proof}

\subsection{The proof of Theorem \ref{int}}

We have so far determined what the intersection of two maximal subgroups in $R(q)$ can be. The final step in proving Theorem \ref{int} is to show that the intersection of more than two maximal subgroups does not yield a subgroup not already on our list.

\begin{proof}[Proof of Theorem \ref{int}]
By the Lemmas in Section \ref{rint} it is clear that the intersection of a pair of maximal subgroups with a number of subfield subgroups will not add anything new to the classes obtained there. By the Lemmas in Sections \ref{rint}--\ref{pint} and the fact that the intersection of three distinct parabolic subgroups is the identity subgroup we have that the intersection of a pair of maximal subgroups with a parabolic subgroup contributes the elements of $\cent_\J^\omega(l)$ where $f<e$ to the classes obtained in Section \ref{pint}. By the Lemmas in Sections \ref{rint}--\ref{cint} and the fact that the intersection of more than three involution centralisers is a Sylow 2-subgroup of $G$ the only new classes we obtain are the elements of $\eff(l)$ where $f=e$. Finally, it is clear that any further intersection with the Hall subgroup normalisers does not contribute any new subgroups.\end{proof}

\section{Determination of the \M function of $R(q)$} \label{det}
We follow closely the style used by Downs \cite{psl2} in order to calculate $\mu_G(H)$ for $H \sg G$. That is, for a fixed subgroup conjugate to $H$ we consider the overgroups $H \sg K \sg G$ for which $\mu_G(K) \neq 0$ to determine their contribution to $\mu_G(H)$. We use a counting argument to determine the number of subgroups conjugate to $K$ which contain $H$. For the most part this is relatively straightforward but in some of the smaller cases, such as the Sylow 2-subgroups or the four-groups, these split into distinct conjugacy classes within a given overgroup and so we use the following formula when we count. Let $N(K,H)$ be the number of subgroups conjugate to $H$ in $G$, contained in $K$. Then, $H$ is contained in
\[\nu_K(H)=\frac{[G:N_G(K)]N(K,H)}{[G:N_G(H)]}\]
subgroups conjugate to $K$ in $G$, where $[G:K]$ is the index of a subgroup $K\sg G$. In what follows, $\mu(n)$, for a positive integer, $n$, will be the classical number theoretic \M function defined by\[\mu(n)= \begin{cases}
1		&\text{if $n=1$}\\
(-1)^d	&\text{if $n$ is the product of $d$ distinct primes}\\
0		&\text{if $n$ has a square factor}.\\
\end{cases}\]

\subsection{$H\cong R(3^h) \in \R(l)$}

A simple counting argument shows that for a subfield subgroup $R(3^h)$, the subfield subgroups which contain it are in one-to-one correspondence with the divisors in the lattice of divisors, $k$, where $h \vert k \vert e$. It is then clear that $\mu_G(H)=\mu(n/k)$.

\subsection{$H \cong (3^h)^{1+1+1}\colon(3^h-1) \in \para(l)$}
\begin{center}\begin{tabular}{c c c c c}\hline
Isomorphism type				&for $k \vert n$			&									&					&\\
of overgroup $K$				&and s.t.				&$N_K(H)$							&$\nu_K(H)$		&$\mu_G(K)$\\ \hline
$R(3^k)$						&--						&$(3^h)^{1+1+1}\colon(3^h-1)$		&1					&$\mu(n/k)$\\
$(3^k)^{1+1+1}\colon(3^k-1)$	&$k>h$					&$(3^h)^{1+1+1}\colon(3^h-1)$		&1					&$-\mu(n/k)$\\ \hline
\end{tabular}\end{center}
For $k>h$ we have the contribution of $R(3^k)$ cancelling with the contribution of $(3^k)^{1+1+1}\colon(3^k-1)$ and so $\mu_G(H)=-\mu(n/h)$.

\subsection{$H \cong 2 \times L_2(3^h) \in \cent_\J(l)$}
\begin{center}\begin{tabular}{c c c c c}\hline
Isomorphism type			&for $k \vert n$			&							&					&\\
of overgroup $K$			&and s.t.				&$N_K(H)$					&$\nu_K(H)$		&$\mu_G(K)$\\ \hline
$R(3^k)$					&--						&$2 \times L_2(3^h)$		&1					&$\mu(n/k)$\\
$2 \times L_2(3^k)$			&$k>h$					&$2 \times L_2(3^h)$		&1					&$-\mu(n/k)$\\ \hline
\end{tabular}\end{center}
As in the case of the parabolic subgroups, the contribution from the subfield subgroups cancel with each $k>h$ and we are left with $\mu_G(H)=-\mu(n/h)$.

\subsection{$H \cong (2^2 \times D_{(3^h+1)/2})\colon3 \in \norm_V(l)$}
\begin{center}\begin{tabular}{c c c c c}\hline
Isomorphism type						&for $k \vert n$			&										&				&\\
of overgroup $K$						&and s.t.				&$N_K(H)$								&$\nu_K(H)$	&$\mu_G(K)$\\ \hline
$R(3^k)$								&--						&$(2^2 \times D_{(3^h+1)/2})\colon3$	&1				&$\mu(n/k)$\\
$(2^2 \times D_{(3^k+1)/2})\colon3$	&$k>h$					&$(2^2 \times D_{(3^h+1)/2})\colon3$	&1				&$-\mu(n/k)$\\ \hline
\end{tabular}\end{center}
As in the case of the parabolic subgroups and involution centralisers, we find that for $H \in \norm_V(l)$, $\mu_G(H) = -\mu(n/k)$.

\subsection{$H \in \norm_2(l) \cup \norm_3(l)$}
Let $H$ be conjugate to $3^h-3^\frac{h+1}{2}+1\colon 6$ or $3^h+3^\frac{h+1}{2}+1\colon 6$ in $\norm_2(l) \cup \norm_3(l)$.
\begin{center}\begin{tabular}{c c c c c}\hline
Isomorphism type						&for $k \vert n$										&				&				&\\
of overgroup $K$						&and s.t.											&$N_K(H)$		&$\nu_K(H)$	&$\mu_G(K)$\\ \hline
$R(3^k)$								&--													&$H$			&1				&$\mu(n/k)$\\
$(2^2 \times D_{(3^k+1)/2})\colon3$	&$\frac{k}{h} \equiv 0 \textnormal{ mod } 3$			&$H$			&1				&$-\mu(n/k)$\\
$3^k+\sqrt{3^{k+1}}+1\colon6$			&$\frac{k}{h} \equiv 1,5 \textnormal{ mod } 12$		&$H$			&1				&$-\mu(n/k)$\\
$3^k-\sqrt{3^{k+1}}+1\colon6$			&$\frac{k}{h} \equiv -1,-5 \textnormal{ mod } 12$	&$H$			&1				&$-\mu(n/k)$\\ \hline
\end{tabular}\end{center}
For each $k$ such that $h \vert k \vert n$ we have by Lemma \ref{numb} that $H$ is contained in one, and only one, of an overgroup belonging to $\norm_V(l)$, $\norm_2(l)$ or $\norm_3(l)$. Then for each $k>h$, the contribution from $R(3^h)$ cancels with a contribution from one of these overgroups. We then see that $\mu_G(H)=-\mu(n/h)$.

\subsection{$H \cong 2^2 \times D_{(3^h+1)/2} \in \cent_V(l)$}
\begin{center}\begin{tabular}{c c c c c}\hline
Isomorphism type						&for $k \vert n$		&											&					&\\
of overgroup $K$						&and s.t.			&$N_K(H)$									&$\nu_K(H)$		&$\mu_G(K)$\\ \hline
$R(3^k)$								&--					&$(2^2 \times D_{(3^h+1)/2})\colon3$		&1					&$\mu(n/k)$\\
$(2^2 \times D_{(3^k+1)/2})\colon3$	&--					&$(2^2 \times D_{(3^h+1)/2})\colon3$		&1					&$-\mu(n/k)$\\
$2 \times L_2(3^k)$						&--					&$2^2 \times D_{(3^h+1)/2}$				&3					&$-\mu(n/k)$\\
$2^2 \times D_{(3^k+1)/2}$				&$k>h$				&$2^2 \times D_{(3^h+1)/2}$				&1					&$3\mu(n/k)$\\ \hline
\end{tabular}\end{center}
For each $h$ the contribution from the subfield subgroups cancels with that of the four-group normalisers leaving the contribution coming from the involution centralisers and so $\mu_G(H)=3\mu(n/h)$.

\subsection{$H \in \dee_H(l)$} Let $H \in \dee_H(l)$ be isomorphic to $D_{2a_2(h)}$ or $D_{2a_3(h)}$ for a divisor $h$ such that $3h \vert n$.
\begin{center}\begin{tabular}{c c c c c c c}\hline
Isomorphism type						&for $k \vert n$									&							&				&\\
of overgroup $K$						&and s.t.										&$N_K(H)$					&$\nu_K(H)$	&$\mu_G(K)$\\ \hline
$R(3^k)$								&$\frac{k}{h} \equiv 0 \textnormal{ mod } 3$		&$(2^2 \times H)\colon3$	&1				&$\mu(n/k)$\\
$(2^2 \times D_{(3^k+1)/2})\colon3$		&$\frac{k}{h} \equiv 0 \textnormal{ mod } 3$		&$(2^2 \times H)\colon3$	&1				&$-\mu(n/k)$\\
$2 \times L_2(3^k)$						&$\frac{k}{h} \equiv 0 \textnormal{ mod } 3$		&$2^2 \times H$			&3				&$-\mu(n/k)$\\
$2^2 \times D_{(3^k+1)/2}$				&$\frac{k}{h} \equiv 0 \textnormal{ mod } 3$		&$2^2 \times H$			&1				&$3\mu(n/k)$\\ \hline
$R(3^k)$								&$\frac{k}{h} \equiv \pm1 \textnormal{ mod } 3$	&$H\colon3$				&4				&$\mu(n/k)$\\
$3^k\pm\sqrt{3^{k+1}}+1\colon6$			&$\frac{k}{h} \equiv \pm1 \textnormal{ mod } 3$	&$H\colon3$				&4				&$-\mu(n/k)$\\ \hline
\end{tabular}\end{center}
From the table it is clear that for each divisor $k$, such that $h \vert k \vert n$, the contributions to $\mu_G(H)$ cancel with one another and so $\mu_G(H)=0$.

\subsection{$H \cong 2 \times (3^h\colon\frac{3^h-1}{2}) \in \cent_\J^\omega(l)$}
\begin{center}\begin{tabular}{c c c c c}\hline
Isomorphism type						&for $k \vert n$			&												&					&\\
of overgroup $K$						&and s.t.				&$N_K(H)$										&$\nu_K(H)$		&$\mu_G(K)$\\ \hline
$R(3^k)$								&--						&$2 \times (3^{h} \colon \frac{3^h-1}{2})$		&1					&$\mu(n/k)$\\
$(3^k)^{1+1+1}\colon(3^k-1)$			&--						&$2 \times (3^{h} \colon \frac{3^h-1}{2})$		&1					&$-\mu(n/k)$\\
$2 \times L_2(3^k)$						&--						&$2 \times (3^{h} \colon \frac{3^h-1}{2})$		&1					&$-\mu(n/k)$\\
$2 \times (3^{k}\colon\frac{3^k-1}{2})$	&$k>h$					&$2 \times (3^{h} \colon \frac{3^h-1}{2})$		&1					&$\mu(n/k)$\\ \hline 
\end{tabular}\end{center}
For $k>h$ the contribution from the subfield subgroups and parabolc subgroups cancel, leaving the contribution from the involution centralisers, and so $\mu_G(H)=\mu(n/h)$.

\subsection{$H \cong 3^h \in \eff(l)$}
\begin{center}\begin{tabular}{c c c c c}\hline
Isomorphism type						&for $k \vert n$		&											&						&\\
of overgroup $K$						&and s.t.			&$N_K(H)$									&$\nu_K(H)$			&$\mu_G(K)$\\ \hline
$R(3^k)$								&--					&$3^{2k}\colon3^h-1$						&$3^{2(n-k)}$			&$\mu(n/k)$\\
$(3^k)^{1+1+1}\colon(3^k-1)$			&--					&$3^{2k}\colon3^h-1$						&$3^{2(n-k)}$			&$-\mu(n/k)$\\
$2 \times L_2(3^k)$						&--					&$2 \times (3^k\colon\frac{3^h-1}{2})$		&$3^{2n-k}$			&$-\mu(n/k)$\\
$2 \times (3^k\colon\frac{3^k-1}{2}$)	&--					&$2 \times (3^k\colon\frac{3^h-1}{2})$		&$3^{2n-k}$			&$\mu(n/k)$\\
$3^{k}$									&$k>h$				&$3^k$										&$(3^k-1)/(3^h-1)$		&0\\ \hline 
\end{tabular}\end{center}
We see that the contributions from the subfield subgroups cancel with those of the parabolic subgroups, and similarly the contributions from the involution centralisers cancel with the contributions from their intersection with the parabolic subgroups, giving $\mu_G(H)=0$.

\subsection{$H \cong 3^h-1 \in \cent_0(l)$}
\begin{center}\begin{tabular}{c c c c c}\hline
Isomorphism type						&for $k \vert n$		&						&					&\\
of overgroup $K$						&and s.t.			&$N_K(H)$				&$\nu_K(H)$		&$\mu_G(K)$\\ \hline
$R(3^k)$								&--					&$D_{2(3^k-1)}$		&1					&$\mu(n/k)$\\
$2 \times L_2(3^k)$						&--					&$D_{2(3^k-1)}$		&1					&$-\mu(n/k)$\\
$(3^k)^{1+1+1}\colon3^k-1$			&--					&$3^k-1$				&2					&$-\mu(n/k)$\\
$2 \times (3^k\colon\frac{3^k-1}{2})$	&--					&$3^k-1$				&2					&$\mu(n/k)$\\
$3^k-1$								&$k>h$				&$3^k-1$				&1					&0\\ \hline
\end{tabular}\end{center}
For each $h$ we see that the contributions from the first four classes of overgroups cancel, leaving $\mu_G(H)=0$.

\subsection{$H \cong 2 \times L_2(3) \in \cent_\J(1)$}
\begin{center}\begin{tabular}{c c c c c}\hline
Isomorphism type						&for $k \vert n$		&							&					&\\
of overgroup $K$						&and s.t.			&$N_K(H)$					&$\nu_K(H)$		&$\mu_G(K)$\\ \hline
$R(3^k)$								&--					&$2 \times L_2(3)$			&1					&$\mu(n/k)$\\
$2 \times L_2(3^k)$						&$h>1$				&$2 \times L_2(3)$			&1					&$-\mu(n/k)$\\
$(2^2 \times D_{(3^k+1)/2})\colon3$	&$h>1$				&$2 \times L_2(3)$			&1					&$-\mu(n/k)$\\ \hline
\end{tabular}\end{center}
The summation of the $R(3^k)$ is equal to 0 and so the remainder of the remaining two lines means that $\mu_G(2 \times L_2(3))=-2\mu(n)$.

\subsection{$H \cong 2^3 \in \ee$}
\begin{center}\begin{tabular}{c c c c c}\hline
Isomorphism type						&for $k \vert n$		&							&					&\\
of overgroup $K$						&and s.t.			&$N_K(H)$					&$\nu_K(H)$		&$\mu_G(K)$\\ \hline
$R(3^k)$								&--					&$2^3\colon7\colon3$		&1					&$\mu(n/k)$\\
$2 \times L_2(3^k)$						&$k>1$				&$2 \times L_2(3)$			&7					&$-\mu(n/k)$\\
$(2^2 \times D_{(3^k+1)/2})\colon3$	&$k>1$				&$2 \times L_2(3)$			&7					&$-\mu(n/k)$\\
$2 \times L_2(3)$						&--					&$2 \times L_2(3)$			&7					&$-2\mu(n)$\\
$2^2 \times D_{(3^k+1)/2}$				&$k>1$				&$2^3$						&7					&$3\mu(n/k)$\\ \hline
\end{tabular}\end{center}
The summation over the $R(3^k)$ equates to 0, as does the total summation over the succeeding three classes. From summation over the final line we then have that $\mu_G(2^3)=21\mu(n)$.

\subsection{$H \cong 2^2 \in \vier$}
\begin{center}\begin{tabular}{c c c c c}\hline
Isomorphism type						&for $k \vert n$		&											&							&\\
of overgroup $K$						&and s.t.			&$N_K(H)$									&$\nu_K(H)$				&$\mu_G(K)$\\ \hline
$R(3^k)$								&--					&$(2^2 \times D_{(3^k+1)/2})\colon3$		&$(3^n+1)/(3^k+1)$		&$\mu(n/k)$\\
$(2^2 \times D_{(3^k+1)/2})\colon3$	&$k>1$				&$(2^2 \times D_{(3^k+1)/2})\colon3$		&$(3^n+1)/(3^k+1)$		&$-\mu(n/k)$\\
$2^2 \times D_{(3^k+1)/2}$				&$k>1$				&$2^2 \times D_{(3^k+1)/2}$				&$(3^n+1)/(3^k+1)$		&$3\mu(n/k)$\\
$2 \times L_2(3^k)$						&$k>1$				&$2^2 \times D_{(3^k+1)/2}$				&$3(3^n+1)/(3^k+1)$		&$-\mu(n/k)$\\
$(2^2 \times D_{(3^k+1)/2})\colon3$	&$k>1$				&(2) $2^3$									&$3(3^n+1)/2$				&$-\mu(n/k)$\\
$2^2 \times D_{(3^k+1)/2}$				&$k>1$				&(6) $2^3$									&$3(3^n+1)/2$				&$3\mu(n/k)$\\
$2 \times L_2(3^k)$						&$k>1$				&(1) $2^3$, (1) $2 \times L_2(3)$			&$3^n+1$					&$-\mu(n/k)$\\
$2 \times L_2(3)$						&--					&(2) $2^3$, (1) $2 \times L_2(3)$			&$7(3^n+1)/4$				&$-2\mu(n)$\\
$2^3$									&--					&(7) $2^3$									&$(3^n+1)/4$				&$21\mu(n)$\\
\hline
\end{tabular}\end{center}
Four-groups are conjugate in $G$ but not necessarily conjugate in subgroups of $G$. Where this is the case, in the $N_K(H)$ column the number in parentheses denotes the number of conjugacy classes of $V$ whose normaliser in $K$ is of the specified isomorphism type. In order to make verification of the arithmetic a little easier, we have separated contributions from overgroups isomorphic to $K$ according to whether the contribution depends on $k$ or not. In the cases where there is no dependence on $k$ the usual properties of the \M function leave us a few terms to tidy up and we eventually find that $\mu_G(2^2)=0$.

\subsection{$H \cong \langle \J\T \rangle \in \cent_6^*$}
\begin{center}\begin{tabular}{c c c c c}\hline
Isomorphism type							&for $k \vert n$		&							&					&\\
of overgroup $K$							&and s.t.			&$N_K(H)$					&$\nu_K(H)$		&$\mu_G(K)$\\ \hline
$R(3^k)$									&--					&$2 \times 3^k$			&$3^{n-k}$			&$\mu(n/k)$\\
$(3^k)^{1+1+1}\colon3^k-1$				&--					&$2 \times 3^k$			&$3^{n-k}$			&$-\mu(n/k)$\\
$2 \times L_2(3^k)$							&$k>1$				&$2 \times 3^k$			&$3^{n-k}$			&$-\mu(n/k)$\\
$2 \times (3^{k}\colon\frac{3^k-1}{2})$		&$k>1$				&$2 \times 3^k$			&$3^{n-k}$			&$\mu(n/k)$\\
$3^k+\sqrt{3^{k+1}}+1\colon6$				&--					&6							&$3^{n-1}$			&$-\mu(n/k)$\\
$3^k-\sqrt{3^{k+1}}+1\colon6$				&$k>1$				&6							&$3^{n-1}$			&$-\mu(n/k)$\\
$(2^2 \times D_{(3^k+1)/2})\colon3$		&$k>1$				&6							&$3^{n-1}$			&$-\mu(n/k)$\\
$2 \times L_2(3)$							&--					&6							&$3^{n-1}$			&$-2\mu(n)$\\ \hline
\end{tabular}\end{center}
The table is organised in such a way that summations which cancel are adjacent. From this it is tedious but not hard to see that $\mu_G(C_6)=0$.

\subsection{$H \cong \langle \T \rangle \in \cent_3^*$}
\begin{center}\begin{tabular}{c c c c c}\hline
Isomorphism type							&for $k \vert n$		&								&					&\\
of overgroup $K$							&and s.t.			&$N_K(H)$						&$\nu_K(H)$		&$\mu_G(K)$\\ \hline
$R(3^k)$									&--					&$3^k \times (3^k\colon2)$		&$3^{2(n-k)}$		&$\mu(n/k)$\\
$(3^k)^{1+1+1}\colon(3^k-1)$				&--					&$3^k \times (3^k\colon2)$		&$3^{2(n-k)}$		&$-\mu(n/k)$\\
$2 \times L_2(3^k)$							&$k>1$				&$2 \times 3^k$				&$3^{2n-k}$		&$-\mu(n/k)$\\
$2 \times (3^{k}\colon\frac{3^k-1}{2})$		&$k>1$				&$2 \times 3^k$				&$3^{2n-k}$		&$\mu(n/k)$\\
$3^k+\sqrt{3^{k+1}}+1\colon6$				&--					&6								&$3^{2n-1}$		&$-\mu(n/k)$\\
$3^k-\sqrt{3^{k+1}}+1\colon6$				&$k>1$				&6								&$3^{2n-1}$		&$-\mu(n/k)$\\
$(2^2 \times D_{(3^k+1)/2})\colon3$		&$k>1$				&6								&$3^{2n-1}$		&$-\mu(n/k)$\\
$2 \times L_2(3)$							&--					&6								&$3^{2n-1}$		&$-2\mu(n)$\\ \hline
\end{tabular}\end{center}
As in the previous case, summations which cancel have been arranged to that they are adjacent and again we see that $\mu_G(C_3^*)=0$.

\subsection{$H \cong \langle \J \rangle \in \cent_2$}
\begin{center}\begin{tabular}{c c c c c}\hline
Isomorphism type							&for $k \vert n$		&											&									&\\
of overgroup $K$							&and s.t.			&$N_K(H)$									&$\nu_K(H)$						&$\mu_G(K)$\\ \hline
$R(3^k)$									&--					&$2 \times L_2(3^k)$						&$3^n(3^{2n}-1)/3^k(3^{2k}-1)$		&$\mu(n/k)$\\
$2 \times L_2(3^k)$							&$k>1$				&$2 \times L_2(3^k)$						&$3^n(3^{2n}-1)/3^k(3^{2k}-1)$		&$-\mu(n/k)$\\
$(3^k)^{1+1+1}\colon(3^k-1)$				&--					&$2 \times (3^k\colon\frac{3^k-1}{2})$		&$3^n(3^{2n}-1)/3^k(3^k-1)$		&$-\mu(n/k)$\\
$2 \times (3^{k}\colon\frac{3^k-1}{2})$		&$k>1$				&$2 \times (3^{k}\colon\frac{3^k-1}{2})$		&$3^n(3^{2n}-1)/3^k(3^k-1)$		&$\mu(n/k)$\\
$(2^2 \times D_{(3^k+1)/2})\colon3$		&$k>1$				&$2^2 \times D_{(3^k+1)/2}$				&$3^n(3^{2n}-1)/2(3^k+1)$			&$-\mu(n/k)$\\
$2^2 \times D_{(3^k+1)/2}$					&$k>1$				&(3) $2^2 \times D_{(3^k+1)/2}$			&$3^n(3^{2n}-1)/2(3^k+1)$			&$3\mu(n/k)$\\
$2 \times L_2(3^k)$							&$k>1$				&(2) $2^2 \times D_{(3^k+1)/2}$			&$3^{n}(3^{2n}-1)/(3^k+1)$			&$-\mu(n/k)$\\
$3^k+\sqrt{3^{k+1}}+1\colon6$				&--					&6											&$3^{n-1}(3^{2n}-1)/2$				&$-\mu(n/k)$\\
$3^k-\sqrt{3^{k+1}}+1\colon6$				&$k>1$				&6											&$3^{n-1}(3^{2n}-1)/2$				&$-\mu(n/k)$\\
$(2^2 \times D_{(3^k+1)/2})\colon3$		&$k>1$				&(1) $2^3$, (1) $2\times L_2(3)$			&$3^{n-1}(3^{2n}-1)/2$				&$-\mu(n/k)$\\
$2^2 \times D_{(3^k+1)/2}$					&$k>1$				&(4) $2^3$									&$3^{n-1}(3^{2n}-1)/2$				&$3\mu(n/k)$\\
$2 \times L_2(3)$							&--					&(2) $2^3$, (1) $2 \times L_2(3)$			&$7.3^{n-1}(3^{2n}-1)/8$			&$-2\mu(n)$\\
$2^3$										&--					&(7) $2^3$									&$3^{n-1}(3^{2n}-1)/8$				&$21\mu(n)$\\ \hline
\end{tabular}\end{center}
As in the case $H \cong V$ we organise the table according to conjugacy classes of subgroups isomorphic to $H$ and eventually find that $\mu_G(2)=0$.

\subsection{$H \cong 1 \in \eye$} A conjecture of Conder states that if $G$ is a non-abelian almost simple group, then $\mu_G(1)$ is divisible by the order of a minimal normal subgroup of $G$.
\begin{center}\begin{tabular}{c c c c c}\hline
Isomorphism type							&for $k \vert n$			&												&\\
of overgroup $K$							&and s.t.				&$\nu_K(H)$									&$\mu_G(K)$\\ \hline
$R(3^k)$									&--						&$\vert G \vert/3^{3k}(3^{3k}+1)(3^k-1)$		&$\mu(n/k)$\\
$3^k+\sqrt{3^{k+1}}+1\colon6$				&--						&$\vert G \vert/6(3^k+\sqrt{3^{k+1}}+1)$		&$-\mu(n/k)$\\
$3^k-\sqrt{3^{k+1}}+1\colon6$				&$k>1$					&$\vert G \vert/6(3^k-\sqrt{3^{k+1}}+1)$			&$-\mu(n/k)$\\
$(3^k)^{1+1+1}\colon(3^k-1)$				&--						&$\vert G \vert/3^{3k}(3^k-1)$					&$-\mu(n/k)$\\
$2 \times L_2(3^k)$							&$k>1$					&$\vert G \vert/3^k(3^{2k}-1)$					&$-\mu(n/k)$\\
$2 \times (3^{k}\colon\frac{3^k-1}{2})$		&$k>1$					&$\vert G \vert/3^k(3^k-1)$						&$\mu(n/k)$\\
$(2^2 \times D_{(3^k+1)/2})\colon3$		&$k>1$					&$\vert G \vert/6(3^k+1)$						&$-\mu(n/k)$\\
$2^2 \times D_{(3^k+1)/2}$					&$k>1$					&$\vert G \vert/6(3^k+1)$						&$3\mu(n/k)$\\
$2 \times L_2(3)$							&--						&$\vert G \vert/24$								&$-2\mu(n)$\\
$2^3$										&--						&$\vert G \vert/168$							&$21\mu(n)$\\ \hline
\end{tabular}\end{center}
After some lengthy calculation, we find that $\mu_G(1)=0$, supporting this conjecture.

\subsection{The proof of Theorem \ref{mob}}
From the determination of the \M function for subgroups $H \sg G$ and the Lemmas in Section \ref{cnr} the \M function for all of $G$ can be verified and is as appears in Theorem \ref{mob}, as was to be shown. In the case when $G=R(27)$ the full subgroup lattice and \M function has been determined by Connor and Leemans \cite{conn} and is seen to agree with our determinations, aside from a number of errors in their calculations as of October 2014.

\section{Applications of the \M function} \label{app}

In this section we state a number of results which are corollaries of Theorem \ref{mob}. Throughout $\vert H \vert_n$ denotes the size of the set of elements in the group $H$ having order $n$.

\subsection{Free products}
Let $F_2$ be the free group on 2 generators and $C_n * C_\infty$, for a positive integer $n$, be the free product with presentation $\langle x,y \mid x^n=1 \rangle$. Our main corollary is then the followin.

\begin{cor} \label{f2} Let $G=R(3^n)$ be a simple small Ree group. The number of inequivalent generating pairs of elements of $G$ is
\[d_2(G)=\frac{\phi(G)}{\vert\textnormal{Aut}(G)\vert}=\frac{1}{n}\sum_{l \vert n}\mu\left(\frac{n}{l}\right)(3^l-1)(3^{6l}-3^{2l}-16).\]\end{cor}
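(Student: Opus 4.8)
The plan is to apply Möbius inversion in the form $\phi(G)=\sum_{H\sg G}\mu_G(H)\sigma(H)$ specialized to $\Gamma=F_2$, so that $\sigma_{F_2}(H)=|H|^2$ counts arbitrary ordered pairs in $H$ and $\phi_{F_2}(G)=\sum_{H\sg G}\mu_G(H)|H|^2$. By Theorem \ref{mob} only the listed conjugacy classes contribute, and for each class I would group the terms as $\mu_G(H)\cdot|H|^2\cdot[G:N_G(H)]$, the last factor being the number of conjugates. First I would tabulate, for each family indexed by a divisor $h\mid n$, the product $[G:N_G(H)]\cdot|H|^2$, reading $[G:N_G(H)]$ directly from the table in Theorem \ref{mob} and $|H|$ from the isomorphism type; since $|G|$ is a common factor of every $[G:N_G(H)]$, each contribution has the shape $|G|\cdot\mu(n/h)\cdot(\text{something})/(\text{order of }N_G(H))\cdot|H|^2$, which simplifies to $|G|$ times a Laurent polynomial in $3^h$.

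Next I would collect all contributions that carry the factor $\mu(n/h)$ for a fixed $h$ — namely the ten rows of the table — and show their sum equals $|G|\cdot\mu(n/h)\cdot c_h$ for an explicit $c_h$; the four rows whose Möbius coefficient is written $\mu(n)$ rather than $\mu(n/h)$ (the $2\times L_2(3)$ and $2^3$ rows) only contribute when $h=n$, i.e. $\mu(n/h)=\mu(1)=1$, so they can be folded into the $h=n$ term of $c_h$ without disturbing the pattern. After dividing by $|\mathrm{Aut}(G)|$: recall $|\mathrm{Aut}(G)|=n|G|$ since $\mathrm{Out}(R(3^n))$ is cyclic of order $n$ (field automorphisms) — this is where the $\tfrac1n$ prefactor and the cancellation of $|G|$ come from — we get $d_2(G)=\tfrac1n\sum_{h\mid n}\mu(n/h)c_h$, and the claim is that $c_h=(3^h-1)(3^{6h}-3^{2h}-16)$. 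The bulk of the work is the algebraic verification that the ten per-$h$ contributions sum to exactly this; I expect the constant $-16$ to emerge from the small "extra" classes ($2\times L_2(3)$ contributing $-2\cdot 24^2/24=-48$ worth of normalized count and $2^3$ contributing $21\cdot 8^2/168=8$ worth, etc.), so the arithmetic bookkeeping of these exceptional terms against the generic family terms is the delicate point.

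The main obstacle is therefore purely computational stamina and sign/normalization care: one must correctly compute $|H|$ for each isomorphism type (e.g. $|(3^h)^{1+1+1}\colon(3^h-1)|=3^{3h}(3^h-1)$, $|2\times L_2(3^h)|=3^h(3^{2h}-1)$, $|2^2\times D_{(3^h+1)/2}|=2(3^h+1)$, and so on), multiply by the index $[G:N_G(H)]$ taken verbatim from Theorem \ref{mob}, cancel the ubiquitous $|G|$, and then verify that $\mu(n/h)\bigl[\,(3^{6h}+1)(3^{3h}+1)(3^h-1)^2/\cdots\,\bigr]$-type expressions telescope into $(3^h-1)(3^{6h}-3^{2h}-16)$. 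I would present this as a single displayed computation collecting like powers of $3^h$, checking it against the known value for small cases such as $R(27)$ (where the full lattice is available from \cite{conn}) and against the analogous Suzuki-group formula in \cite{suzenum} as a sanity check, and then conclude by invoking $d_2(G)=\phi_{F_2}(G)/|\mathrm{Aut}(G)|$ with $|\mathrm{Aut}(G)|=n|G|$.
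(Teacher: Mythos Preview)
Your plan is exactly the (implicit) argument the paper uses: apply $\phi_{F_2}(G)=\sum_{H\sg G}\mu_G(H)\vert H\vert^2$ with the data of Theorem~\ref{mob}, collect contributions by divisor, and divide by $\vert\mathrm{Aut}(G)\vert=n\vert G\vert$. The paper gives no separate proof of this corollary beyond that.

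One slip to fix before you do the arithmetic: the two exceptional rows with M\"obius value $-2\mu(n)$ and $21\mu(n)$ do \emph{not} belong to the $h=n$ term. Writing $\mu(n)=\mu(n/1)$ shows they belong to $l=1$, not $l=n$; indeed $\mu(n/n)=\mu(1)=1\neq\mu(n)$ in general. Concretely, the paper's Remark after the $\mi$ list records $\norm_V(1)=\cent_\J(1)$ and $\cent_V(1)=\ee$, so $2\times L_2(3)$ and $2^3$ are the degenerate $h=1$ instances of the families that are otherwise listed only for $h>1$; their contributions must be absorbed into the $l=1$ summand of $c_l$. If you fold them into $l=n$ as you wrote, your check against the $R(27)$ value will fail. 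With that correction the bookkeeping goes through and yields $c_l=(3^l-1)(3^{6l}-3^{2l}-16)$ as claimed.
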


\begin{remark} The quantity $d_2(G)$ has a number of other interpretations, a few of which we mention here. \begin{itemize}
\item if $G$ is simple, this is equal to the largest positive integer, $d$, such that $G^d$ can be 2-generated,
\item in Grothendieck's theory of dessins d'enfants \cite{dd} this is equal to the number of distinct regular dessins with automorphism group isomorphic to $G$,
\item the number of oriented hypermaps having automorphism group isomorphic to $G$ \cite{dj87}.
\end{itemize}\end{remark}

The values of $d_2(R(3^n))$ for the first few values of $n$ are then as follows.
\[\begin{tabular}{c r}\hline
$G$		&$d_2(G)$\\ \hline
$R(3^3)$	&3\,357\,637\,312\\
$R(3^5)$	&9\,965\,130\,790\,521\,984\\
$R(3^7)$	&34\,169\,987\,177\,353\,651\,660\,608\\
$R(3^9)$	&127\,166\,774\,444\,890\,319\,085\,083\,766\,720\\ \hline
\end{tabular}\]

Our remaining examples in this chapter are free products of cyclic groups. In order to prove the next Lemma, we will need the results in Table \ref{hn} which are easily obtained from the character table of $R(q)$ found in \cite{ward}.

\begin{center}\begin{table}\begin{tabular}{c c c c c c c}\hline
Isomorphism							&								&								&\\
type of $H \sg G$						&$\vert H \vert_2$				&$\vert H \vert_3$				&$\vert H \vert_6$\\ \hline
$R(3^h)$								&$3^{2h}(3^{2h}-3^h+1)$		&$(3^{3h}+1)(3^{2h}-1)$		&$3^{2h}(3^{3h}+1)(3^h-1)$\\
$3^h+\sqrt{3^{h+1}}+1\colon6$			&$3^h+\sqrt{3^{h+1}}+1$		&$2(3^h+\sqrt{3^{h+1}}+1)$	&$2(3^h+\sqrt{3^{h+1}}+1)$\\
$3^h-\sqrt{3^{h+1}}+1\colon6$			&$3^h-\sqrt{3^{h+1}}+1$		&$2(3^h-\sqrt{3^{h+1}}+1)$		&$2(3^h-\sqrt{3^{h+1}}+1)$\\
$(3^h)^{1+1+1}\colon3^h-1$			&$3^{2h}$						&$3^{2h}-1$					&$3^{2h}(3^h-1)$\\
$2 \times L_2(3^h)$					&$3^{2h}-3^h+1$				&$3^{2h}-1$					&$3^{2h}-1$\\
$2 \times (3^{h}\colon\frac{3^h-1}{2})$	&1								&$3^h-1$						&$3^h-1$\\
$(2^2 \times D_{(3^h+1)/2})\colon3$	&$3^h+4$						&$2(3^h+1)$					&$2(3^h+1)$\\
$2^2 \times D_{(3^h+1)/2}$				&$3^h+4$						&--								&--\\
$2 \times L_2(3)$						&7								&8								&8\\
$2^3$									&7								&--								&--\\ \hline
\end{tabular} \caption{\label{hn} Values of $\vert H \vert_n$ for $n=2,3$ and 6.} \end{table}\end{center}

\begin{cor} \label{cn} Let $G=R(3^n)$ be a simple small Ree group and $\phi_{n_1,\dots,n_i}(G)=\vert\textnormal{Epi}(C_{n_1}*\dots*C_{n_i},G)\vert$ where $n_i \in \mathbb{N} \cup \{\infty\}$. Then \begin{enumerate}
\item \[\phi_{2,\infty}(G)=\vert G \vert\sum_{l \vert n}\mu\left(\frac{n}{l}\right)(3^l-1)(3^{3l}-3^l-2),\]
\item \[\phi_{2,2,2}(G)=\vert G \vert\sum_{l \vert n}\mu\left(\frac{n}{l}\right)(3^l-1)(3^{4l}-3^{3l}+2.3^{2l}-1)\]
\item \[\phi_{3,\infty}(G)=\vert G \vert\sum_{l \vert n}\mu\left(\frac{n}{l}\right)(3^l-1)(3^{4l}-3^{3l}-3^l-4),\]
\item \[\phi_{3,3}(G)=\vert G \vert\sum_{l \vert n}\mu\left(\frac{n}{l}\right)3^l(3^{2l}+3^l-4),\]
\item \[\phi_{6,\infty}(G)=\vert G \vert\sum_{l \vert n}\mu\left(\frac{n}{l}\right)(3^l-1)(3^{5l}-3^l-6)\] and 
\item \[\phi_{9,\infty}(G)=\vert G \vert\sum_{l \vert n}\mu\left(\frac{n}{l}\right)3^{5l}(3^l-1).\]
\end{enumerate}
\end{cor}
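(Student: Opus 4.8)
The plan is to run Hall's \M inversion in its relational form, $\phi_\Gamma(G)=\sum_{H\sg G}\mu_G(H)\,\sigma_\Gamma(H)$ with $\sigma_\Gamma(H)=\lvert\mathrm{Hom}(\Gamma,H)\rvert$, against the list of subgroups with $\mu_G(H)\neq 0$ furnished by Theorem~\ref{mob}. First I would record that for a free product $\Gamma=C_{n_1}*\dots*C_{n_i}$ a homomorphism into $H$ is simply an independent choice, for each $j$, of an element whose order divides $n_j$ (no condition when $n_j=\infty$), so $\sigma_\Gamma(H)=\prod_{j=1}^{i}\bigl(\sum_{d\mid n_j}\lvert H\rvert_d\bigr)$, the $j$-th factor being read as $\lvert H\rvert$ when $n_j=\infty$. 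Since $\mu_G$, each $\lvert H\rvert_d$ and $\lvert H\rvert$ are constant on $G$-conjugacy classes, this becomes $\phi_\Gamma(G)=\sum_{H}[G:N_G(H)]\,\mu_G(H)\,\sigma_\Gamma(H)$ with $H$ running over the ten families of Theorem~\ref{mob}, whose indices $[G:N_G(H)]$ and \M values are listed there.

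Expanding the products supplies the generating functions needed: $\sigma_{C_2*C_\infty}(H)=(1+\lvert H\rvert_2)\lvert H\rvert$, $\sigma_{C_2*C_2*C_2}(H)=(1+\lvert H\rvert_2)^3$, $\sigma_{C_3*C_3}(H)=(1+\lvert H\rvert_3)^2$, $\sigma_{C_6*C_\infty}(H)=(1+\lvert H\rvert_2+\lvert H\rvert_3+\lvert H\rvert_6)\lvert H\rvert$ and $\sigma_{C_9*C_\infty}(H)=(1+\lvert H\rvert_3+\lvert H\rvert_9)\lvert H\rvert$. Two elementary cancellations lighten the work. Taking $H=1$ in the defining relation of $\mu_G$ gives $\sum_{H\sg G}\mu_G(H)=0$, and $\phi_{C_\infty}(G)=0$ (as $G$ is not cyclic) gives $\sum_{H\sg G}\mu_G(H)\lvert H\rvert=0$; hence any term of $\sigma_\Gamma(H)$ that is constant, or linear in $\lvert H\rvert$ alone, contributes nothing, and each $\phi_\Gamma(G)$ collapses to a sum over the genuinely mixed monomials in $\lvert H\rvert_2,\lvert H\rvert_3,\lvert H\rvert_6,\lvert H\rvert_9$ and their products with $\lvert H\rvert$.

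The remaining inputs are Table~\ref{hn} together with the orders of the ten families, all available from Section~\ref{struct}; the only quantity not already tabulated there is $\lvert H\rvert_9$, which enters only in (6). Elements of order $9$ occur only in the subfield subgroups and in the parabolics, and from Section~\ref{struct} one obtains $\lvert R(3^h)\rvert_9=3^{2h}(3^{3h}+1)(3^h-1)$ (three classes $\C_9^0,\C_9^+,\C_9^-$, each centralised by a subgroup of order $3\cdot 3^h$) and $\lvert(3^h)^{1+1+1}\colon(3^h-1)\rvert_9=3^{3h}-3^{2h}$ (the elements of the normal Sylow $3$-subgroup lying outside its elementary abelian normal subgroup of order $3^{2h}$), all other families containing none; a handy intermediate fact is that the number of solutions of $x^{9}=1$ is $3^{6h}$ in $R(3^h)$ and $3^{3h}$ in its parabolic.

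It then remains, for each of the six presentations, to substitute the triple $([G:N_G(H)],\,\mu_G(H),\,\sigma_\Gamma(H))$ of every family into $\sum_{H}[G:N_G(H)]\mu_G(H)\sigma_\Gamma(H)$, with the parameter $h$ ranging over the divisors of $n$ subject to the $h>1$ restrictions of Theorem~\ref{mob}, and to gather the coefficient of each $\mu(n/h)$; for the generic families $[G:N_G(H_h)]\lvert H_h\rvert$ equals $\lvert G\rvert$ in the self-normalising case and $\lvert G\rvert/3$ for $2^2\times D_{(3^h+1)/2}$, so after multiplying by $\pm\mu(n/h)$ (or $3\mu(n/h)$) the level-$h$ contributions assemble into $\lvert G\rvert\,\mu(n/h)$ times a polynomial in $3^h$. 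I expect the only genuine difficulty to be the bookkeeping: one must treat the two exceptional rows $2\times L_2(3)$, with $\mu_G=-2\mu(n)$, and $2^{3}$, with $\mu_G=21\mu(n)$, separately, check that together with the $h>1$ exclusions of five of the families they reconstruct exactly the $h=1$ term of the claimed sums, be careful to use $[G:N_G(H)]$ rather than $[G:H]$ for the non-self-normalising families $2^{2}\times D_{(3^h+1)/2}$ and $2^{3}$, and remember that the polynomial attached to $\mu(n/l)$ is only determined modulo an additive constant, since $\sum_{l\mid n}\mu(n/l)=0$ for $n>1$; one normalises it, as in Corollary~\ref{f2}, so that the bracket is divisible by $3^{l}-1$. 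As a final check the answers can be compared with $d_2(R(27))$ and with the subgroup lattice of $R(27)$ computed by Connor and Leemans, as noted after Theorem~\ref{mob}.
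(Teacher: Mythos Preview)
Your proposal is correct and follows the same route as the paper: apply Hall's inversion $\phi_\Gamma(G)=\sum_{H}\mu_G(H)\sigma_\Gamma(H)$ against the ten families of Theorem~\ref{mob}, with $\sigma_\Gamma(H)$ read off from Table~\ref{hn} (and, for $n=9$, from the class data in Section~\ref{struct}). The paper's own proof is extremely terse---it writes only ``in each case we set $\sigma_\Gamma(H)=\lvert H\rvert\,\lvert H\rvert_n$'' and points to the table---so your version is in fact a more careful exposition of the same argument; in particular your explicit use of the cancellations $\sum_H\mu_G(H)=0$ and $\sum_H\mu_G(H)\lvert H\rvert=0$ is exactly what justifies the paper's shorthand for the $\phi_{n,\infty}$ cases, and your separate treatment of $\phi_{2,2,2}$ and $\phi_{3,3}$ fills in what the paper leaves implicit. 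One small remark: your closing comment that ``one normalises so that the bracket is divisible by $3^l-1$'' does not apply uniformly (item~(4) has bracket $3^l(3^{2l}+3^l-4)$), but this is only a presentational observation and does not affect the argument.
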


\begin{proof}
In each case we set $\sigma_\Gamma(H)=\vert H \vert \vert H \vert_n$ for the appropriate value of $n$. Values of $\vert H \vert_n$ for $n=2,3$ and 6 can be found in Table \ref{hn}, for $n=9$ these elements are found only in subfield subgroups and their parabolic subgroups and from the character table \cite{ward} we have $\vert R(3^h) \vert_9=3^{2h}(3^{3h}+1)(3^h-1)$ and $\vert (3^h)^{1+1+1}\colon3^h-1 \vert_9=3^{2h}(3^h-1)$.
\end{proof}

\begin{remark} The quantities $\phi_{2,\infty}$ and $\phi_{2,2,2}$ are of interest in the study of regular polytopes as they correspond to the number of regular hypermaps and, respectively, orientably regular maps  having automorphism group isomorphic to $G$. We refer the reader to \cite{dj87,suzenum} for more details.
\end{remark}

\subsection{Hecke groups}
The Hecke group, $H_n$, for a natural number $n>2$ is defined as follows \[H_n = \langle x,y \mid x^2=y^n=1 \rangle\] and in the case $n=3$ this is isomorphic to the modular group $PSL_2(\ZZ)$. To determine $\phi_{H_n}=\eta_n$ our summatory function becomes $\sigma_{H_n}(H) = \vert H \vert_2 \vert H \vert_n$. Then \[\eta_n = \vert G \vert\sum_{H \sg G}\mu_G(H)\vert H \vert_2 \vert H \vert_n.\] Since $R(q)$ does not contain elements of orders 4, 5 or 8, we will consider $H_3$, $H_6$, $H_7$ and $H_9$.

\begin{cor} \label{eta} Let $G=R(3^n)$ be a simple small Ree group and $\eta_n=\vert$\textnormal{Epi}$(H_n,G)\vert$. Then \begin{enumerate}
\item \[\eta_3(G)=\vert G \vert\sum_{l \vert n}\mu\left(\frac{n}{l}\right)(3^l-1)^2\]
\item \[\eta_6(G)=\vert G \vert\sum_{l \vert n}\mu\left(\frac{n}{l}\right)3^l(3^{2l}-3^l-2)\]
\item \[\eta_9(G)=\vert G \vert\sum_{l \vert n}\mu\left(\frac{n}{l}\right)3^{2l}(3^l-1).\]
\end{enumerate}
\end{cor}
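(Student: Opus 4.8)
The plan is to run the argument of the proof of Corollary~\ref{cn} once more, combining Hall's inversion formula with the \M function recorded in Theorem~\ref{mob}. A homomorphism $H_n\to G$ is exactly an ordered pair $(a,b)\in G\times G$ with $a^2=1$ and $b^n=1$, so writing $s_k(H)$ for the number of solutions of $x^k=1$ in a subgroup $H$, the summatory function is $\sigma_{H_n}(H)=s_2(H)\,s_n(H)$ and Hall's theorem gives
\[\eta_n=\phi_{H_n}(G)=\sum_{H\sg G}\mu_G(H)\,s_2(H)\,s_n(H)=\sum_{[H]}[G:N_G(H)]\,\mu_G(H)\,s_2(H)\,s_n(H),\]
the last sum running over conjugacy classes of subgroups. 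By Theorem~\ref{mob} only the ten listed families have $\mu_G(H)\neq0$, and for each of them Theorem~\ref{mob} supplies $\mu_G(H)$ and $[G:N_G(H)]$; meanwhile $s_2(H)=1+\vert H\vert_2$ and $s_n(H)=1+\sum_{1<d\mid n}\vert H\vert_d$ are read off from Table~\ref{hn} for $n=3,6$, and for $n=9$ one instead uses that elements of order $3$ or $9$ occur only in subfield subgroups and their parabolics, with $\vert R(3^h)\vert_9=3^{2h}(3^{3h}+1)(3^h-1)$ and $\vert(3^h)^{1+1+1}\colon(3^h-1)\vert_9=3^{2h}(3^h-1)$ taken from the character table in \cite{ward}.

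What remains is the bookkeeping. First one strips the low‑order contributions: since $G$ is non‑cyclic one has $\sum_H\mu_G(H)=\sum_H\mu_G(H)\vert H\vert_2=\sum_H\mu_G(H)\vert H\vert_2^2=0$ (these are $\phi_\Gamma(G)$ for $\Gamma$ trivial, for $\Gamma=C_2$, and for $\Gamma=C_2*C_2$), which removes everything except $\sum_H\mu_G(H)\vert H\vert_2\vert H\vert_n$ together with, when $3\mid n$, the cross term $\sum_H\mu_G(H)\vert H\vert_2\vert H\vert_3$; the latter equals precisely the $\eta_3$ of part (1), and one simply folds it back in when assembling the answer. Second, in every one of the ten families $[G:N_G(H)]$ is $\vert G\vert$ divided by the order of a ``natural'' subgroup (a subfield subgroup, parabolic, Hall‑subgroup normaliser, and so on), so each summand is $\vert G\vert$ times a rational function of $3^h$; collecting the families sharing a parameter $h$, all denominators cancel, and one is left with $\vert G\vert$ times a sum over divisors $h\mid n$ of $\mu(n/h)$ times a polynomial in $3^h$. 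Exactly as in the proof of Theorem~\ref{mob}, the subfield‑subgroup terms cancel against those of the parabolic subgroups, involution centralisers and Hall‑subgroup normalisers contained in them, and the sum telescopes, by the defining property of the number‑theoretic \M function, down to the leading family, yielding the closed forms (1)--(3).

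The main obstacle is purely the arithmetic: for each of the ten subgroup families and each of $n=3,6,9$ one must pin down which element orders occur and with what multiplicities (Table~\ref{hn} handles $n=3,6$, but the order‑$9$ case and the fact that a solution of $y^n=1$ need not have order exactly $n$ each require separate attention), and then verify that the rational functions in $3^h$ combine as stated. No structural input beyond Theorem~\ref{mob}, Table~\ref{hn} and the character table of $R(q)$ is needed, and the mechanics are the same as those already used for Corollaries~\ref{f2} and \ref{cn}.
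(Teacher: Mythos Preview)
Your approach is essentially the paper's: apply Hall's inversion with the \M\ values of Theorem~\ref{mob} and the element counts of Table~\ref{hn} (plus Ward's character table for order~$9$), exactly as in Corollary~\ref{cn}. The only cosmetic difference is that the paper takes $\sigma_{H_n}(H)=\vert H\vert_2\,\vert H\vert_n$ directly (exact orders), whereas you work with $s_2(H)s_n(H)$ and then strip the lower-order terms via $\phi_1(G)=\phi_{C_2}(G)=\phi_{C_2*C_2}(G)=0$; one small slip is your remark that ``elements of order $3$ or $9$ occur only in subfield subgroups and their parabolics''---this is true for order $9$ but not for order $3$ (cf.\ Table~\ref{hn}), though your subsequent handling of the order-$3$ cross term shows you are in fact treating this correctly.
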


It is well-know, see for example \cite{rr, malle}, that the simple small Ree groups are quotients of the modular group $PSL_2(\ZZ)$. With the \M function we can be a little more precise.

\begin{cor} Let $G=R(3^n)$ be a simple small Ree group and $\Gamma=PSL_2(\ZZ)$ be the modular group. The number of distinct normal subgroups, $N$, of $\Gamma$ such that $\Gamma/N \cong G$ is given by \[d_\Gamma(G)=\frac{\eta_3(G)}{\vert\textnormal{Aut}(G)\vert}=\frac{1}{n}\sum_{l \vert n}\mu\left(\frac{n}{l}\right)(3^l-1)^2.\]
In particular, $G^d$, is a quotient of the modular group for any integer $1 \leq d \leq d_\Gamma(G)$.
\end{cor}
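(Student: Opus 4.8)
The plan is to reduce the displayed identity to part~(1) of Corollary~\ref{eta} together with the order of $\textnormal{Aut}(G)$, and then to deduce the ``in particular'' clause by a standard subdirect-product argument. First I would recall that $PSL_2(\ZZ)$ has the presentation $\langle x,y\mid x^2=y^3=1\rangle$, so $\Gamma=PSL_2(\ZZ)\cong H_3$ and hence $\phi_\Gamma(G)=\vert\textnormal{Epi}(\Gamma,G)\vert=\eta_3(G)$; Corollary~\ref{eta}(1) then supplies $\phi_\Gamma(G)=\vert G\vert\sum_{l\vert n}\mu(n/l)(3^l-1)^2$. Next I would record that, for $G=R(3^n)$ with $n$ odd, the small Ree groups have neither diagonal nor graph automorphisms, so $\textnormal{Out}(G)$ is cyclic of order $n$, generated by the Frobenius field automorphism $x\mapsto x^3$, whence $\vert\textnormal{Aut}(G)\vert=n\vert G\vert$ --- the same value already implicit in Corollary~\ref{f2}. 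As recalled in the introduction, $\textnormal{Aut}(G)$ acts semiregularly on the ordered generating pairs of $G$, so $d_\Gamma(G)=\phi_\Gamma(G)/\vert\textnormal{Aut}(G)\vert$ is a nonnegative integer counting the normal subgroups $N\nsg\Gamma$ with $\Gamma/N\cong G$; dividing the two quantities above yields the stated closed form $\tfrac1n\sum_{l\vert n}\mu(n/l)(3^l-1)^2$.

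It then remains to establish the last sentence. Here I would fix an integer $d$ with $1\le d\le d_\Gamma(G)$ and choose epimorphisms $\theta_1,\dots,\theta_d\colon\Gamma\to G$ with pairwise distinct kernels, which is possible precisely because $d_\Gamma(G)$ enumerates the distinct such kernels. Form $\theta=(\theta_1,\dots,\theta_d)\colon\Gamma\to G^d$. Every coordinate projection of $\theta(\Gamma)$ is onto, so $\theta(\Gamma)$ is a subdirect product of the $d$-fold power of the nonabelian simple group $G$; by Goursat's lemma such a subdirect product is a product of diagonal subgroups indexed by a partition of $\{1,\dots,d\}$, and it is all of $G^d$ unless two indices $i\ne j$ lie in a common block of the partition. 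But a common block would give $\alpha\in\textnormal{Aut}(G)$ with $\theta_j=\alpha\circ\theta_i$, hence $\ker\theta_i=\ker\theta_j$, contrary to our choice. Therefore $\theta$ is surjective and $G^d$ is a quotient of $PSL_2(\ZZ)$.

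The arithmetic is immediate once Corollary~\ref{eta} is available, so the genuine ingredients are the value $\vert\textnormal{Aut}(R(3^n))\vert=n\vert G\vert$ and the subdirect-product lemma invoked above. I expect the only (and rather minor) obstacle to be phrasing that lemma carefully: one must observe that two epimorphisms onto $G$ share a kernel if and only if they differ by an automorphism of $G$, which is exactly where simplicity of $G$ and the semiregularity of the $\textnormal{Aut}(G)$-action are used, and which simultaneously guarantees both that the $\theta_i$ can be chosen with pairwise distinct kernels and that distinct kernels rule out every proper subdirect subgroup of $G^d$.
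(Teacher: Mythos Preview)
Your proposal is correct and follows exactly the route the paper intends: the corollary is stated without proof because it is immediate from Corollary~\ref{eta}(1), the identity $\vert\textnormal{Aut}(R(3^n))\vert=n\vert G\vert$ (already used implicitly in Corollary~\ref{f2}), and Hall's semiregular-action principle from the introduction. Your subdirect-product argument for the ``in particular'' clause is the standard justification of the fact, recorded in the Remark after Corollary~\ref{f2}, that for simple $G$ the integer $d_\Gamma(G)$ is the largest $d$ for which $G^d$ is a $\Gamma$-quotient; the paper simply invokes this as known rather than spelling it out.
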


\begin{remark} Finally, we remark that the \M function can also used to determine the number of Hurwitz triples of $G$, that is generating sets $\langle x,y,z \rangle$ such that $x^2=y^3=z^7=xyz=1$. Groups for which such a generating set occurs are known as Hurwitz groups and their study is well documented, see \cite{cond1, cond2} for Conder's surveys of this area. We shall say no more about them here since it was proven by Malle \cite{malle} and independently by Jones \cite{rr} that the simple small Ree groups are Hurwitz groups.
\end{remark}

\subsection{Probabilistic generation} Let $P_{a,b}(G)$ be the probability that the group $G$ can be generated by a randomly chosen element of order $a$ and a randomly chosen element of order $b$, where $a,b \geq 2$ are natural numbers or $\infty$ to mean any randomly chosen element, irrespective of its order.

\begin{proof}[Proof of Corollary \ref{23ree}] From Corollaries \ref{cn} and \ref{eta} and Table \ref{hn} we have
\[P_{2,3}(G)=\frac{\eta_{3}}{\vert G \vert_2 \vert G \vert_3}=\frac{3^n\sum_{l \vert n}\mu\left(\frac{n}{l}\right)(3^l-1)^2}{(3^{3n}+1)}\]
and
\[P_{3,3}(G)=\frac{\phi_{3,3}}{\vert G \vert_3 \vert G \vert_3}=\frac{3^{3n}\sum_{l \vert n}\mu\left(\frac{n}{l}\right)3^l(3^{2l}+3^l-4)}{(3^{3n}+1)(3^{2n}-1)(3^n+1)}\]
both of which tend to 1 as $\vert G \vert \to \infty$.\end{proof}

We are also able to deduce a number of other probabilistic results, all of which are similarly proved.

\begin{cor} Let $G=R(3^n)$ be a small Ree group, $P_{a,b}(G)$ as before and $P_{2,2,2}(G)$ be the probability that three randomly chosen involutions generate $G$. Then \begin{enumerate}
\item $P_{\infty,\infty}(G)$,
\item $P_{2,\infty}(G)$,
\item $P_{3,\infty}(G)$,
\item $P_{6,\infty}(G)$,
\item $P_{9,\infty}(G)$,
\item $P_{2,6}(G)$,
\item $P_{2,9}(G)$ and
\item $P_{2,2,2}(G)$
\end{enumerate}
all tend to 1 as $\vert G \vert \to \infty$. \end{cor}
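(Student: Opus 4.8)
The plan is to reduce all eight assertions to the single computation carried out in the proof of Corollary \ref{23ree}. For each case, express the probability as a ratio whose numerator is the number of generating tuples of $G$ with the prescribed element orders and whose denominator is the number of all tuples of those orders; thus $P_{\infty,\infty}(G)=\phi_{F_2}(G)/\vert G\vert^2$, $P_{a,\infty}(G)=\phi_{a,\infty}(G)/(\vert G\vert_a\vert G\vert)$ for $a=2,3,6,9$, $P_{2,6}(G)=\phi_{2,6}(G)/(\vert G\vert_2\vert G\vert_6)$, $P_{2,9}(G)=\phi_{2,9}(G)/(\vert G\vert_2\vert G\vert_9)$, and $P_{2,2,2}(G)=\phi_{2,2,2}(G)/\vert G\vert_2^3$, where in each case the numerator is obtained by Hall inversion from $\sigma(H)=\vert H\vert_{n_1}\cdots\vert H\vert_{n_i}$ (with the convention $\vert H\vert_\infty=\vert H\vert$):
\[\text{numerator}=\sum_{H\sg G}\mu_G(H)\,\vert H\vert_{n_1}\cdots\vert H\vert_{n_i}.\]
Evaluating these sums with Theorem \ref{mob}, Table \ref{hn}, and the two entries $\vert R(3^h)\vert_9=3^{2h}(3^{3h}+1)(3^h-1)$, $\vert(3^h)^{1+1+1}\colon3^h-1\vert_9=3^{2h}(3^h-1)$ recorded in the proof of Corollary \ref{cn}, every numerator takes the form
\[\vert G\vert\sum_{l\vert n}\mu\!\left(\frac{n}{l}\right)g_\Gamma(3^l)\]
for an explicit integer polynomial $g_\Gamma$ with leading coefficient $1$; six of the eight of these are already displayed in Corollaries \ref{f2} and \ref{cn}, and $\phi_{2,6}$ and $\phi_{2,9}$ are computed by the same bookkeeping over conjugacy classes of subgroups.

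Second, I would extract the asymptotics. Writing $q=3^n$, the $l=n$ term of the sum is $\vert G\vert\,g_\Gamma(q)\sim\vert G\vert\,q^{\deg g_\Gamma}$, and since $n$ is odd every proper divisor $l$ of $n$ satisfies $l\le n/3$, so
\[\Bigl|\sum_{l\vert n,\ l<n}\mu(n/l)\,g_\Gamma(3^l)\Bigr|\le\sum_{l\vert n,\ l<n}\vert g_\Gamma(3^l)\vert=O\bigl(q^{(\deg g_\Gamma)/3}\log n\bigr),\]
which is of strictly smaller order. Hence each numerator equals $\vert G\vert\,q^{\deg g_\Gamma}(1+o(1))$ as $\vert G\vert\to\infty$.

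Third, I would compare with the denominators. Substituting $h=n$ into Table \ref{hn} gives $\vert G\vert=q^3(q^3+1)(q-1)$, $\vert G\vert_2=q^2(q^2-q+1)$, $\vert G\vert_3=(q^3+1)(q^2-1)$, and $\vert G\vert_6=\vert G\vert_9=q^2(q^3+1)(q-1)$, each a polynomial in $q$ with leading coefficient $1$. It then suffices to check, case by case, that the degree in $q$ of the numerator equals the degree of the corresponding denominator — the common degrees are $14,\,11,\,12,\,13,\,13,\,10,\,10,\,12$ for $P_{\infty,\infty},P_{2,\infty},P_{3,\infty},P_{6,\infty},P_{9,\infty},P_{2,6},P_{2,9},P_{2,2,2}$ respectively — and that the leading coefficients agree, which they do, being $1$ on both sides. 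Numerator and denominator are then asymptotically equal, and each probability tends to $1$.

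The bulk of the work, and the step I expect to be the main obstacle, is this final degree-and-leading-coefficient matching. Conceptually it amounts to the statement that the tuples of the prescribed orders which fail to generate $G$ form a vanishing proportion of all such tuples: any such tuple lies in a proper subgroup, and among the maximal subgroups only the subfield subgroups $R(3^{n/p})$ and the involution centralisers $2\times L_2(q)$ are large enough to matter, each accounting for only an $O(q^{-\varepsilon})$ fraction. The cases $P_{6,\infty}$ and $P_{9,\infty}$ additionally rely on the character-table fact that $\vert G\vert_6$ and $\vert G\vert_9$ are genuinely of order $q^6$ with leading coefficient $1$, and the cases $P_{2,6}$ and $P_{2,9}$ require carrying out the Corollary \ref{cn}-style evaluation for $\Gamma=C_2*C_6$ and $\Gamma=C_2*C_9$, which are not among the free products treated there.
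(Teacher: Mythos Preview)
Your proposal is correct and follows the same route as the paper, which simply states that all eight limits are ``similarly proved'' by the computation in Corollary~\ref{23ree}: write each probability as the explicit ratio, pick out the dominant $l=n$ term, and match degrees and leading coefficients. Your asymptotic bookkeeping (proper divisors satisfy $l\le n/3$, so lower-order terms contribute $o(q^{\deg g_\Gamma})$) is exactly the justification the paper leaves implicit.

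One small oversight: you flag $P_{2,6}$ and $P_{2,9}$ as requiring new evaluations for $\Gamma=C_2*C_6$ and $C_2*C_9$, but these are precisely the Hecke groups $H_6$ and $H_9$, and the corresponding counts $\eta_6(G)$ and $\eta_9(G)$ are already given in Corollary~\ref{eta}. So in fact all eight numerators (and hence all eight degree checks) are available from Corollaries~\ref{f2}, \ref{cn}, and~\ref{eta} without further computation.
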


\begin{remark} The first three results are due to Liebeck and Shalev proves using different methods to those we employ here. The first result appears in \cite{ls95} while the second and third appear in \cite{ls} as Theorems 1.1 and 1.2 respectively. \end{remark}

\section*{Acknowledgements}
The author wishes to thank his supervisor Ben Fairbairn for tireless patience and valuable guidance; Gareth Jones for originally suggesting the problem and many further helpful conversations regarding its resolution; and the organisers of SIGMAP 2014 and the conversation with Dimitri Leemans which it afforded. The author would also like to thank Rob Wilson, Peter Cameron and Jeroen Schillewaert for their time and comments regarding the geometry of the small Ree groups.

\small

\end{document}